\newtheorem{theorem}{\sc Theorem}[section]
\newtheorem{lemma}[theorem]{\sc Lemma}
\newtheorem{proposition}[theorem]{\sc Proposition}
\theoremstyle{remark}
\newtheorem{remark}[theorem]{\sc Remark}
\numberwithin{equation}{section}
\newcommand{\be}{\begin{equation}}
\newcommand{\ee}{\end{equation}}
\newcommand{\nn}{\nonumber}
\providecommand{\abs}[1]{\lvert#1\rvert}
\providecommand{\absb}[1]{\bigl\lvert#1\bigr\rvert}
\providecommand{\norm}[1]{\Vert#1\Vert}
\newcommand{\fl}[1]{\lfloor{#1}\rfloor}
\newcommand{\RNum}[1]{\uppercase\expandafter{\romannumeral #1\relax}}
\def\E{\bE}
\def\P{\bP} 
\def\fluc{\overline{h}} 
\def\cov0{V_0} 
\def\cA{\mathcal{A}}\def\cB{\mathcal{B}}
\def\cF{\mathcal{F}}
\def\cH{\mathcal{H}}
\def\cM{\mathcal{M}}
\def\cN{\mathcal{N}}
\def\cS{\mathcal{S}}
\def\cI{\mathcal{I}}
\def\bE{\mathbb{E}}
\def\bN{\mathbb{N}}
\def\bP{\mathbb{P}}
\def\bR{\mathbb{R}}
\def\bZ{\mathbb{Z}}
\def\e{\varepsilon}
\def\ind{\mathbf{1}}
\def\ddd{\displaystyle} 
\def\h{h}
\def\mE{\mathbf{E}}   
\def\mP{\mathbf{P}}
\def\m1{\mathbf{1}}
\def\mVar{\text{\bf Var}}
\def\mCov{\text{\bf Cov}}
\def\R{\bR} \def\N{\bN} \def\Z{\bZ}   
\def\OFP{(\Omega,\cF, P)}
\DeclareMathOperator{\Var}{Var}   \DeclareMathOperator{\Cov}{Cov}  
 \def\Vvv{{\rm\mathbb{V}ar}}   
   \def\wt{\widetilde}
 \def\sss{g}
 \def\pmu{v_1}
\begin{document}
\title[Hammersley's  harness process]{Hammersley's  harness process: invariant distributions and height fluctuations} 

\author{Timo Sepp\"al\"ainen}
\address{Timo Sepp\"al\"ainen\\ University of Wisconsin-Madison\\ 
Department of Mathematics\\ 480 Lincoln Dr.\\  
Madison WI 53706\\ USA.}
\email{seppalai@math.wisc.edu}

\author{Yun Zhai}
\address{Yun Zhai\\ University of Wisconsin-Madison\\ 
Department of Statistics\\ 1300 University Ave\\  
Madison WI 53706\\ USA.}
\email{yunzhai41@gmail.com}
\keywords{Harness, Gaussian process, Edwards-Wilkinson universality class, random walk, fractional Brownian motion, fluctuations, interface, process tightness, strong mixing coefficients, linear process, harmonic crystal, stochastic heat equation}
\date{\today}
\begin{abstract}  We study the invariant distributions of Hammersley's serial harness process in all dimensions and height fluctuations in one dimension.   Subject to mild moment assumptions there is essentially one  unique invariant distribution,  and  all other  invariant distributions  are obtained    by adding harmonic functions of the  averaging kernel.    We identify one Gaussian case where the invariant distribution is i.i.d.  Height fluctuations in one dimension obey the  stochastic heat equation  with additive noise   (Edwards-Wilkinson universality).   We prove this for correlated initial data subject to polynomial decay of  strong mixing coefficients, including process-level tightness in the  Skorohod space of space-time trajectories.  
\end{abstract}
\maketitle

\section{Introduction}

Thinking about  the  crystalline structure of metals around 1956  led J.~M.~Hammersley  to formulate  the {\it serial harness}.  This process    $h_t(x)$ evolves on the  lattice $\Z^d$  via the equation 
\be\label{hamm1}   h_{t+1}(x)=\sum_y p(y-x) h_t(y)  + \xi_{t+1}(x) \ee
where $p$ is a symmetric random walk kernel on $\Z^d$ and $\{\xi_t(x)\}$ are i.i.d.\ random variables with mean zero and finite variance.  These ideas  were  recorded later in article  \cite{hamm-67} that contains, among other things,    calculations that relate the fluctuations of the process to the behavior of the random walk kernel.

   Toom \cite{toom-97}   studied   the convergence of $h_t(x)$ as $t\to\infty$, as a function of the   tail of the  noise $\xi_t(x)$.   \cite{toom-97} also contains several  references to the physics literature.

 A natural  continuous-time version of the process  applies the   local update \eqref{hamm1}   at the epochs of Poisson processes attached to lattice points $x$.  The ergodic properties of the continuous-time process were investigated by   Hsiao \cite{hsiao-82,hsiao-85},   first for the Gaussian case and then more generally.  In particular,   \cite{hsiao-85}  recorded the order of magnitude of the fluctuations of $h_t(x)$ from the  identically $0$ initial condition:  $t^{1/4}$ in $d=1$,  $\sqrt{\log t}$ in $d=2$, and bounded in $d\ge 3$.    In $d\ge 3$,  \cite{hsiao-85}  showed  (i) uniqueness  of an invariant distribution that is invariant under spatial shifts and has given finite mean and variance, and (ii)    convergence  to such equilibrium from initial distributions that are invariant and ergodic under spatial shifts and possess a finite second moment.  
   
 In the same    continuous-time setting, Ferrari and Niederhauser  \cite{ferr-nied-06}  introduced 
  a dual representation of the harness process in terms of backward random walks.  Through  martingale techniques and the random walk representation, \cite{ferr-nied-06} proved convergence of the process from flat initial profiles both on $\Z^d$ and on subsets of $\Z^d$ and obtained some rates of convergence.    In $d\in\{1,2\}$, instead of $h_t$ itself, \cite{ferr-nied-06}  considered   $h_t-h_t(0)$  (the process as seen from the height at the origin) or the pinned process with boundary condition  $h_t(0)=0$.  
In particular,  \cite{ferr-nied-06} identified the invariant distributions of the harness with Gaussian noise in  $d\ge 3$   as  Gaussian Gibbs fields (harmonic crystals)  studied by \cite{capu-deus-00}.

Our paper  works with the discrete time process \eqref{hamm1}, with a general finitely supported random walk kernel $p$.  
We  establish that  in one space dimension the  height fluctuations of the process  obey Edwards-Wilkinson universality.  This means that on the space and time scale $n$,   fluctuations of the height are of order  $n^{1/4}$,   spatial correlations occur on the scale $n^{1/2}$, and limit distributions are Gaussian.  In particular, height fluctuations of the stationary process converge  to fractional Brownian motion with Hurst parameter $1/4$.  We do not address limits in  higher dimensions.    

 We prove the height fluctuations  for spatially stationary,  correlated initial data, subject to polynomial decay of  strong mixing coefficients. 
 Our proofs use the discrete-time counterpart of the dual representation of Ferrari and Niederhauser  \cite{ferr-nied-06} and  employ a CLT for linear processes due to  Peligrad and Utev  \cite{peli-utev-97}.   We obtain also  process-level tightness in the  Skorohod space of space-time trajectories.  

  As preparation for the fluctuation results   we study the invariant distributions of the process.  In  this part  there is no advantage in restricting to one dimension so we do it in general.  
  The difference with past work is that we consider the increment process, as is necessary for  invariant distributions in $d\in\{1,2\}$.  In any case, the increment process is the natural object  to study because total increment is conserved. (In one dimension, if height $h(x)$ jumps,  the changes to the  left and right increments   $h(x)-h(x-1)$ and $h(x+1)-h(x)$ cancel each other.)  For a given kernel $p$ and noise distribution $\xi$, there is essentially one invariant distribution and its transformations by  adding harmonic functions.  We prove this uniqueness under a condition on the growth of the first moment of the increments  on the lattice ${\Z^d}$.    
 
 \smallskip
 
To summarize, these are the contributions  of this paper.   

(i) For invariant distributions we cover low dimensions $d=\{1,2\}$. Our convergence and uniqueness results require weaker moment assumptions than earlier results and we  do not assume spatial shift invariance for uniqueness.  

 (ii)  We prove that    the harness process obeys EW universality.   
For EW class height fluctuations  in general,  past work is for product form initial distributions,  while  our paper covers some correlated initial data.    In particular, we see that fractional Brownian motion  with Hurst parameter $1/4$ arises from a stationary process also when the invariant distribution is not of product form.   We also obtain process-level tightness in space and time, which was done earlier only for independent walks.  

 \smallskip 
 
 Past proofs of  one-dimensional EW class fluctuations for an interface or a particle current covered independent particles \cite{kuma-08, sepp-rw, sepp-10-ens},  independent particles  in a static random environment \cite{pete-sepp-10}  and   in a dynamic random environment \cite{jose-rass-sepp}, the random average process \cite{bala-rass-sepp, ferr-font-rap},  
  and a recent  continuum example from Howitt-Warren flows \cite{yu-flow}.  
   In the context of colliding particles where the position of a tagged particle is a function of  current, related  work goes back to Harris \cite{harr-65} and later in  \cite{durr-gold-lebo}.  Fluctuations of order $t^{1/4}$ and  fractional Brownian motion  limits have also been identified in the simple symmetric exclusion process since the classic work of Arratia \cite{arra-83}  and later in  \cite{dema-ferr-02, jara-land-06, peli-seth-08, rost-vare-85}.  
 
The  EW class  should be contrasted with the KPZ (Kardar-Parisi-Zhang)  class where the exponents are $1/3$ for height or current fluctuations and $2/3$ for spatial correlations.  Recent  reviews of KPZ appear in   \cite{boro-gori, corw-rev}.  Roughly speaking,  EW fluctuations are expected when the macroscopic  velocity of the interface is a linear function of  the slope (as in Theorem \ref{thm:hydro}  below), 
while KPZ fluctuations are expected when  this connection is nonlinear.  
 
 \subsection*{Organization of the paper}
Section \ref{sec:hamm} describes the model and the results on the invariant distributions,  and Section \ref{sec:fluct}  contains the height fluctuations in $d=1$.  
The remainder of the paper covers the proofs:  Section \ref{sec:inv-pf}  for invariant distributions,  Section \ref{sec:fd} for finite-dimensional convergence of height fluctuations, and Section \ref{sec:tight}   for process-level tightness  of height fluctuations.  

\subsection*{Notation and conventions}   We collect here some items for quick reference.  
$\Z_+=\{0,1,2,3,\dotsc\}$,  $\N=\{1,2,3,\dotsc\}$,   $[n]=\{1,2,\dotsc, n\}$.   The $d$-dimensional integer lattice is $\Z^d=\{x=(x_1,\dotsc, x_d):  \text{each $x_i\in\Z$}  \}$.   $\abs{x}$ is the absolute value of $x\in\R$  or the  Euclidean norm of $x\in\R^d$.  The imaginary unit is $\iota=\sqrt{-1}$.   $C$ is a finite positive constant whose value may change from line to line.  

 $X\sim\mu$ means that random variable $X$ has distribution $\mu$:  $P(X\in A)=\mu(A)$ for Borel sets $A$ on the state space of $X$.  $\cM_1$ is the space of Borel probability measures on $(\R^d)^{\Z^d}$.  $\cI$ is  the subspace of measures that are  invariant for the increment process defined below by equation \eqref{eta-11}.   Transition probability $p^k(x,y)$ can be written $p^k_{x,y}$ to save space.

 $E$ represents  generic expectation,  and  $E^\mu$ and $\Var^\mu$  denote expectation and variance  under probability measure $\mu$.      
There are three particular expectation operators. (i)  $\E$ is expectation under the probability measure $\P$ of the i.i.d.\ variables  $\xi=\{\xi_t(x)\}_{t\in\Z,\,x\in\Z^d}$ that drive the dynamics.  (ii)   $\mP$ is the probability measure and $\mE$ the expectation  of the harness process.  The superscript on $\mE^\eta$ and $\mE^\nu$  identifies the initial increment configuration $\eta$ or the initial distribution $\nu$.  
  $\P$ is the $\xi$-marginal of $\mP$. (iii)  In the proofs of the height fluctuations,  $P$ with expectation  $E$ refers to the random walks $X^i_t$ of the dual representation of the harness process.


\section{The harness process and its invariant distributions}\label{sec:hamm}
\subsection{The model}  Fix a dimension $d\in\N$.  The state of the harness process is a real-valued {\it height function}  $\h:\Z^d\to\R$ that evolves randomly  in discrete time. 
The evolution is determined by a fixed weight   vector $\{p(x)\}_{x\in\Z^d}$ and a collection of i.i.d.\ ``noise'' variables $\{\xi_t(x)\}_{t\in\Z, x\in\Z^d}$.  The weight vector $\{p(x)\}$ is a finitely supported  nondegenerate  probability vector on $\Z^d$ with these properties: 
\be\label{w-ass} \begin{aligned}
&0\le p(x)<1, \ \  \sum_{x\in\Z^d}p(x)=1, \ \   \exists M<\infty \text{  such that  $p(x)=0$ for $\abs{x}>M$, and }\\
&\text{$\forall u\in\Z^d$,  the smallest additive  subgroup of  $\Z^d$  that contains the translated}\\
&\text{support $\{u+x: p(x)>0\}$ is  $\Z^d$  itself.}    
\end{aligned}\ee
The last property is {\it strong aperiodicity} in Spitzer's terminology \cite[p.~42]{spitzer}.     In $d=1$ it is the same as requiring that $p$ have {\it span 1} \cite[Sect.~3.5]{durr}.

It is useful to think of $p(x)$ as a random walk transition probability $p(x,y)=p(y-x)$.  Multistep transitions are  denoted by   $p^0(x,y)=\ind_{x=y}$, $p^1(x,y)=p(x,y)$, and for $k\ge 2$, 
\[  p^k(x,y)=\sum_{x_1, x_2, \dotsc, x_{k-1}\in\Z^d} p(x,x_1)p(x_1,x_2)\dotsm p(x_{k-1},y) .\] 
Let $\pmu=\sum_{x\in\Z^d}  x\,p(x)$ denote the mean (vector) of $p$. In 
  $d=1$  the   variance is 
\be\label{p-6}  
\sigma_1^2=\sum_{x\in\Z}  (x-\pmu)^2p(x), \quad\text{assumed $>0$.}  \ee

The state of the height process at time $t\in\Z$  is denoted by $\h_t=\{\h_t(x)\}_{x\in\Z^d}\in\R^{\Z^d}$.   Given $h_t$,    the evolution from time $t$ to $t+1$ is governed by the equation 
 \be
\h_{t+1}(x)=\sum_{y\in\Z^d}p(x,y) \h_t(y)+\xi_{t+1}(x),\quad x\in\bZ^d.   \label{i1}\ee
Since adding a constant to $\xi_t(x)$ would simply add a constant speed to   $h_t$, we   assume   that $\E[\xi_t(x)]=0$.    Our results also require (at least) square-integability.      We summarize these assumptions as 
 \be\label{xi-ass} \begin{aligned}
&\text{$\xi=\{\xi_t(x)\}_{t\in\bZ, \,x\in\bZ^d}$ are  i.i.d.\ real-valued mean-zero  random variables}\\ &\text{with $\sigma_\xi^2= \E[\,\abs{\xi_t(x)}^2\,]<\infty$. }    
\end{aligned}\ee
The probability measure on the variables $\xi$ is denoted by $\P$ and expectation by $\E$, while for the harness process we write $\mP$ and $\mE$.   $\P$ is the  $\xi$-marginal of $\mP$.  
 
That the harness process should obey EW universality is indicated by a hydrodynamic limit  described by a linear partial differential equation $\frac{\partial}{\partial t}u - \pmu\cdot\nabla u=0$.  

\begin{theorem}[Hydrodynamic limit]   \label{thm:hydro} 
Let $u_0$ be a continuous  function on $\R^d$ and define $u(t,x)=u_0(x+t\pmu)$.  
Let $\{h^n_\centerdot\}$ be a sequence of harness processes whose initial height functions approximate $u_0$ locally uniformly in probability: 
\[ 
\lim_{n\to\infty} \mP\Bigl\{ \, \sup_{x\in\R^d:\,\abs x\le R} \absb{ n^{-1} h^n_0(\fl{nx}) -u_0(x)} \ge\e\Bigr\}  =0 \quad \forall \,\e>0, \; R<\infty.  
\] 
Then 
\[  \lim_{n\to\infty} \mP\bigl\{ \,    \absb{ n^{-1} h^n_{\fl{nt}}(\fl{nx}) -u(t,x)} \ge\e\Bigr\}  =0 \quad \forall \,\e>0,\, (t,x)\in\R_+\times\R^d. \]  
\end{theorem}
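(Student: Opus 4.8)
The plan is to solve the recursion \eqref{i1} explicitly and then apply a law of large numbers for the random walk kernel. Iterating \eqref{i1} from time $0$ to time $m=\fl{nt}$ gives the exact representation
\[
h^n_m(x)=\sum_{y\in\Z^d}p^m(x,y)\,h^n_0(y)+\sum_{k=1}^{m}\sum_{y\in\Z^d}p^{m-k}(x,y)\,\xi_k(y).
\]
The second term is pure noise: each inner sum $\sum_y p^{m-k}(x,y)\xi_k(y)$ has mean zero and variance $\sigma_\xi^2\sum_y p^{m-k}(x,y)^2$, which is $O(1)$ for $d\ge 3$ and $O(\sqrt m)$ for $d=1$ (by the local CLT, using \eqref{w-ass}); in every dimension the whole noise sum has variance $O(m)$, hence is $o(n^2)$, so it vanishes after dividing by $n$. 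The substantive term is the first one. Writing $x=\fl{nx}$ and $m=\fl{nt}$, the point is that $p^m(\fl{nx},\,\centerdot\,)$ is the law of $\fl{nx}+S_m$ where $S_m$ is the $m$-step random walk with step distribution $p$; thus
\[
\sum_{y}p^m(\fl{nx},y)\,h^n_0(y)=\mE^{\text{RW}}\!\bigl[\,h^n_0(\fl{nx}+S_m)\,\bigr].
\]

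The key steps, in order: \textbf{(1)} Establish the iterated representation above and the variance bound on the noise term, so that $n^{-1}h^n_{\fl{nt}}(\fl{nx})$ has the same limit in probability as $n^{-1}\mE^{\text{RW}}[h^n_0(\fl{nx}+S_m)]$. \textbf{(2)} Apply the (weak) law of large numbers to $S_m$: since $p$ has finite support, all moments are finite, and $S_m/m\to\pmu$ in probability, so $S_m=m\pmu+o_P(m)=nt\pmu+o_P(n)$. Hence $(\fl{nx}+S_m)/n\to x+t\pmu$ in probability. \textbf{(3)} Feed this into the hypothesis on $h^n_0$. The assumption gives $n^{-1}h^n_0(\fl{n z})\to u_0(z)$ locally uniformly in probability; combined with $S_m/n$ concentrating near the deterministic point $x+t\pmu$, a routine ``uniform convergence composed with a convergent argument'' argument yields $n^{-1}h^n_0(\fl{nx}+S_m)\to u_0(x+t\pmu)=u(t,x)$ in probability. \textbf{(4)} Upgrade the convergence of $n^{-1}h^n_0(\fl{nx}+S_m)$ to convergence of its $\mE^{\text{RW}}$-average by a standard truncation: split on the event $\{|S_m|\le R_n\}$ for a slowly growing $R_n$ (say $R_n=n^{3/4}$), use the local-uniform hypothesis inside this event and the large-deviation/Markov tail bound on $S_m$ outside it, noting that on the bad event the contribution to the average is controlled because $\sup_{|z|\le R}n^{-1}|h^n_0(\fl{nz})|$ is bounded in probability (another consequence of the hypothesis, since it converges to the continuous, hence locally bounded, $u_0$).

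I expect \textbf{step (4)} — transferring the in-probability convergence of the integrand $n^{-1}h^n_0(\fl{nx}+S_m)$ to the $\mathrm{RW}$-expectation $n^{-1}\mE^{\text{RW}}[h^n_0(\fl{nx}+S_m)]$ — to be the only point requiring care, because we only control $h^n_0$ on compact sets and have no a priori global growth bound on it, so the mass $S_m$ puts at atypically distant sites must be shown negligible using the finite-support-driven Gaussian/exponential tail of $S_m$. The probabilistic inputs are otherwise soft: the exact linear representation, the LLN for a finite-range walk, and continuity of $u_0$. Note also that this argument gives convergence in probability at each fixed $(t,x)$, exactly as stated, with no need for tightness or moment assumptions on $\xi$ beyond the square-integrability already in \eqref{xi-ass} (indeed only finiteness of $\sigma_\xi^2$ is used, and even that only through the crude $O(m)$ variance bound).
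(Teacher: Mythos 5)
The paper omits its own proof of this theorem (deferring to \cite{zhai-phd}), so there is nothing to compare line by line; but your route is the natural one and matches the machinery the paper uses everywhere else, namely the dual random walk representation \eqref{21-l1}. Steps (1)--(3) are fine: the iterated linear representation is exact, the noise term has variance $\sigma_\xi^2\sum_{k=0}^{m-1}q^k(0,0)=O(m)$ so it is $O_{\mP}(\sqrt n)$ and dies under the $n^{-1}$ scaling, and the LLN gives $(\fl{nx}+S_m)/n\to x+t\pmu$ in probability, matching $u(t,x)=u_0(x+t\pmu)$.

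The one place you go wrong is the mechanism in step (4), and ironically it is also the place where you make the problem harder than it is. First, the event $\{|S_m|\le n^{3/4}\}$ is the \emph{wrong} truncation when $\pmu\ne 0$: there $S_m= m\pmu+O_P(\sqrt m)$ is of order $n$, so your "good" event has probability tending to $0$, not $1$; you would need to center, i.e.\ truncate on $\{|S_m-m\pmu|\le R_n\}$. Second, no truncation or tail estimate is needed at all: assumption \eqref{w-ass} gives $p(z)=0$ for $|z|>M$, hence the deterministic bound $|S_m|\le Mm\le M\fl{nt}$, so the rescaled argument $(\fl{nx}+S_m)/n$ lies in the fixed compact ball of radius $R=|x|+Mt+1$ for every realization of the walk. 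On the $\mP$-event $G_n=\{\sup_{|z|\le R}|n^{-1}h^n_0(\fl{nz})-u_0(z)|<\delta\}$, whose probability tends to $1$ by hypothesis, you get the pathwise bound
\[
\absb{\,n^{-1}E^{\mathrm{RW}}\bigl[h^n_0(\fl{nx}+S_m)\bigr]-E^{\mathrm{RW}}\bigl[u_0\bigl((\fl{nx}+S_m)/n\bigr)\bigr]\,}\le \delta ,
\]
and the remaining deterministic expectation converges to $u_0(x+t\pmu)$ by the LLN, continuity of $u_0$, and bounded convergence (since $u_0$ is bounded on the ball of radius $R$). With that replacement your proof is complete; your worry about "no a priori global growth bound on $h^n_0$" simply does not arise for a finitely supported kernel.
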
 
   We omit the straightforward proof of the theorem (see \cite{zhai-phd}).

\subsection{Invariant distributions}   We focus on the invariant distributions of the increment process because  those exist in all dimensions.  Once formulated, the increment process stands on its own and  is more general because not every process that  obeys the increment evolution comes from a height process. We study the invariant distributions of this more general ``increment process''.  At the end of the section we  make brief  contact with the height process.  
  

Given a height process $h_\centerdot$ that obeys  \eqref{i1},   define 
increment variables by   
\be\label{eta-5} \eta_t(x,y)=\h_t(y)-\h_t(x)\quad\text{ for $x,y\in\Z^d$.} \ee
 Increment variables satisfy  the   additivity relation 
 \be\label{add} \eta_t(x,y)+\eta_t(y,z)=\eta_t(x,z) \quad\text{ for $x,y, z\in\Z^d$} \ee
and by   \eqref{i1}  the  Markovian evolution   
 \be\label{De11} \begin{aligned} 
&\eta_{t}(x,y)=\sum_{z\in\Z^d}  p(0,z)\eta_{t-1}(x+z,y+z) \, +\, \xi_{t}(y)-\xi_{t}(x).
\end{aligned} \ee
Conversely,  if   variables $\eta_t(x,y)$ satisfy additivity \eqref{add} at $t=0$ and \eqref{De11} for $t\in\N$, then \eqref{eta-5} holds for  the height process $h_\centerdot$  defined by $h_0(0)=0$, $h_0(x)=\eta_0(0,x)$, and \eqref{i1}.   

Iterate \eqref{De11}  backward in time: 
 \be\label{De11a} \begin{aligned} 
&\eta_{t}(x,y)=\sum_{z\in\Z^d}  p(0,z)\eta_{t-1}(x+z,y+z) \, +\, \xi_{t}(y)-\xi_{t}(x)\\
 &=\dotsm= 
 \sum_{z\in\Z^d}  p^{s}(0,z)\eta_{t-s}(x+z,y+z) \, +\, \sum_{k=0}^{s-1}\sum_{z\in\Z^d} \xi_{t-k}(z) \bigl(p^k(y,z)-p^k(x,z)\bigr) 
\end{aligned} \ee
for  $x,y\in\Z^d$ and $s\in\N$.  
Imagine taking $s\to\infty$   with  zero initial condition ``$\eta_{-\infty}=0$''  in the infinite past.  This suggests the  definition  
 \be\label{De7}  \Delta_t(x,y)=\sum_{k=0}^\infty \sum_{z\in\Z^d} \xi_{t-k}(z) \bigl( p^k(y,z)-p^k(x,z)\bigr),\quad x,y\in\Z^d, \; t\in\Z.
\ee
The series of independent mean zero variables on the right of \eqref{De7} converges almost surely and in $L^2(\P)$ because the sum of variances converges (Theorem 2.5.3 in \cite{durr}):
\be\label{De7.5}\begin{aligned}
\E[ \Delta_t(x,y)^2]&= \sigma_\xi^2\sum_{k=0}^\infty \sum_{z\in\Z^d}   \bigl( p^k(y,z)-p^k(x,z)\bigr)^2\\
&= 2\sigma_\xi^2\sum_{k=0}^\infty  \bigl( q^k(0,0)-q^k(x-y,0)\bigr) =2\sigma_\xi^2a(x-y)
\end{aligned}\ee 
where we introduced the  symmetric random walk kernel 
\be q(x,y)=q(0,y-x)=\sum_{z\in\Z^d}p(0,z)p(x,y+z),\quad x,y\in\bZ^d,  \label{21-d2}\ee
and its  potential kernel   
\be
a(x)=\sum_{k=0}^\infty [q^k(0,0)-q^k(x,0)], \qquad x\in\bZ^d.\label{t37-2-1}\ee
The strong aperiodicity assumption  in  \eqref{w-ass}  guarantees that $\Z^d$ is the smallest subgroup that contains the support $\{x: q(x)>0\}$.  
Convergence of  \eqref{t37-2-1}  under this property   is classical:  see Section 28  in \cite{spitzer} for $d=1$ and  Section 12 for $d=2$. In $d\ge 3$ convergence follows from the transience of the $q$-walk. 

\begin{remark}
The convergence issue here is real.  Suppose $p(-1)+p(1)=1$  in $d=1$ so that  $q$ is supported on $\{-2,0,2\}$ and assumption \eqref{w-ass} is violated.  Then from \eqref{De7} 
\[  \Delta_t(0,1)=\sum_{k=0}^\infty \sum_{z\in\Z} \xi_{t-k}(z)   p^k(1,z)  
-   \sum_{k=0}^\infty \sum_{z\in\Z} \xi_{t-k}(z)  p^k(0,z)  \]  
splits into two independent sums because for a given $(k,z)$ pair at most one of 
$p^k(1,z)$ and $p^k(0,z)$ is nonzero.   That this series cannot converge even in distribution in $d=1$  can be shown with the characteristic function argument of the proof of Theorem \ref{h-thm}(a) in Section \ref{sec:inv-pf}. 
\end{remark}

One checks from the definition \eqref{De7} that the variables $\Delta_t(x,y)$ satisfy additivity \eqref{add} and the evolution equation
\be\label{De11b} 
\Delta_{t+1}(x,y)=\sum_z p(0,z)\Delta_t(x+z,y+z) \, +\, \xi_{t+1}(y)-\xi_{t+1}(x), \quad x,y\in\Z^d,\; t\in\Z. 
\ee
Note that $\Delta_t(x,y)$  is a function of $\{\xi_s\}_{s:s\le t}$ and thereby independent of $\xi_{t+1}$.    Hence  \eqref{De11b}  describes a  Markovian evolution.     $\{\Delta_t(x,y)\}$ is  an  increment process of a height process. 

By additivity  \eqref{add}  it is enough to keep track of nearest-neighbor increments.  Let us  simplify   the   process to  $\eta_t=\{\eta_t(x)\}_{x\in\Z^d}$ with the vector-valued spin variable  $\eta_t(x)=(\eta_t(x-e_i,x))_{i\in[d]}\in\R^d$.  (The notation is $[d]=\{1,2,\dotsc,d\}$.) The state space of the time-$t$  configuration  $\eta_t$   is $(\R^d)^{\Z^d}$.   
For $y-x=e_i$ the evolution equation 
 \eqref{De11} specializes to  
\be\label{eta-11} 
\eta_{t+1}(x-e_i,x)=\sum_z p(0,z)\eta_t(x+z-e_i,x+z) \, +\, \xi_{t+1}(x)-\xi_{t+1}(x-e_i), 
\ee
for  $x\in\Z^d$ and $ i=1,\dotsc,d$.  We now consider  the $(\R^d)^{\Z^d}$-valued  process $\eta_t$  defined by \eqref{eta-11}, without  imposing the additivity requirement \eqref{add}.  





Let us establish some standard notation and  terminology.  
  A generic element of the state space  $(\R^d)^{\Z^d}$ of   process \eqref{eta-11}  is denoted by $\eta=(\eta(x))_{x\in\Z^d}$ with $\eta(x)=(\eta(x-e_i,x))_{i\in[d]}\in\R^d$.   $\cM_1=\cM_1((\R^d)^{\Z^d})$ is the space of probability measures on $(\R^d)^{\Z^d}$. 
   A   measure $\mu\in \cM_1$ is invariant for  process \eqref{eta-11}  if equation \eqref{eta-11} preserves this distribution:  that is, if $\eta_0\sim\mu$ and noise $\xi_1=\{\xi_1(x)\}_{x\in\Z^d}$ is independent of $\eta_0$, then $\eta_1$ defined by  \eqref{eta-11} satisfies again  $\eta_1\sim\mu$.   $\cI$ denotes the convex set of probability measures in $\cM_1$  that are invariant for  process \eqref{eta-11}.  

Write $\{\theta_{x,t}\}_{x\in\Z^d, \, t\in\Z}$ for 
  shift mappings  in space ($x$-index)  and time ($t$-index).   When only space or only time is shifted, the other index is set to zero. 
   For example,   for   $\eta\in(\R^d)^{\Z^d}$,  $(\theta_{a,0}\eta)(x-e_i,x)=\eta(x+a-e_i,x+a)$.    
  A probability measure $\mu\in\cM_1$ is (spatially) shift-invariant   if $\mu(\theta_{x,0}A)=\mu(A)$ for all Borel sets $A\subseteq (\R^d)^{\Z^d}$ and $x\in\Z^d$. A Borel set $B\subseteq (\R^d)^{\Z^d}$ is  shift-invariant  
    if  $\theta_{x,0}B=B$ $\forall x\in\Z^d$.  A shift-invariant probability measure 
 $ \mu\in\cM_1$ is ergodic if $\mu(B)\in\{0,1\}$ for every  invariant Borel set $B$.   A shift-invariant probability measure $\mu$ on $(\R^d)^{\Z^d\times\Z}$ is ergodic under the individual space-time shift $\theta_{z,s}$ if  $\mu(A)\in\{0,1\}$ for every Borel set $A$ that satisfies $\theta_{z,s}A=A$.  
 
 The Markov chain  $\eta_\centerdot$ defined by \eqref{eta-11} without the additivity requirement \eqref{add}  gives us a   larger class of processes than the increment processes coming from  height processes.  But we find that, under very lenient moment assumptions, all  stationary $\eta_\centerdot$ processes are obtained by adding harmonic functions to  $\Delta_\centerdot$ of  \eqref{De7}.   
  
 The first theorem summarizes what was   developed above.   Set 
 \be\label{De16}  \Delta_t(x)=(\Delta_t(x-e_i,x))_{i\in[d]}\in\R^d\quad\text{ and }\quad 
 \Delta_t=\{\Delta_t(x)\}_{x\in\Z^d}\in(\R^d)^{\Z^d}. \ee     
 The   
 invariance and   ergodicity claim   below  follows from  
  $\Delta_t(x)(\xi)=\Delta_0(0)(\theta_{x,t}\xi)$.     Let $\pi_0\in\cI$ 
denote  the distribution of $\Delta_t$.   
 
 \begin{theorem}\label{eta-thm-1}   Assume \eqref{w-ass} and \eqref{xi-ass}. 
  Then the  series in \eqref{De7} converges  for almost every $\xi$ and   in $L^2(\bP)$.      For almost every  $\xi$   the variables $\Delta_t(x,y)$ satisfy additivity \eqref{add} and the  evolution  equation 
\eqref{De11b}.  
 The process $\Delta_\centerdot=\{\Delta_t(x)\}_{t\in\Z, x\in\Z^d }$  defined in \eqref{De16} is a stationary version of the Markov chain  \eqref{eta-11} and is invariant and ergodic under each individual space-time shift mapping $\theta_{x,t}$ for $(x,t)\ne(0,0)$. 

  \end{theorem}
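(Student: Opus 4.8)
The plan is to verify the three assertions in order, leaning on the computations already recorded in \eqref{De7.5}--\eqref{t37-2-1}. For the convergence claim, fix $x,y\in\Z^d$ and $t\in\Z$ and observe that for each $k$ the inner sum $\sum_{z}\xi_{t-k}(z)\bigl(p^k(y,z)-p^k(x,z)\bigr)$ is a \emph{finite} sum, since $p^k$ is supported in a ball of radius $kM$. The resulting series over $k\ge0$ is a sum of independent mean-zero random variables, so by Kolmogorov's one-series theorem (Theorem 2.5.3 in \cite{durr}) it converges almost surely and in $L^2(\P)$ as soon as the variances are summable. Expanding the square and using the symmetry of $p$ — so that $p^k(x,z)=p^k(z,x)$ and the kernel $q$ of \eqref{21-d2} equals $p^{2}$ — the Chapman--Kolmogorov equation gives $\sum_{z}\bigl(p^k(y,z)-p^k(x,z)\bigr)^2=2\bigl(q^k(0,0)-q^k(x-y,0)\bigr)$, whence the total variance equals $2\sigma_\xi^2\,a(x-y)$ with $a$ as in \eqref{t37-2-1}. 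Finiteness of $a$ then follows from the strong aperiodicity in \eqref{w-ass} together with the classical potential theory of recurrent walks in $d\in\{1,2\}$ (Sections 28 and 12 of \cite{spitzer}) and from transience of the $q$-walk in $d\ge3$; this is exactly the issue flagged in the Remark above.

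Next I would check the two algebraic identities for almost every $\xi$ by manipulating the now-convergent series in \eqref{De7}. Additivity \eqref{add} is immediate: add the defining series for $\Delta_t(x,y)$ and $\Delta_t(y,z)$ term by term (legitimate since each converges a.s.), and the coefficients telescope to those of $\Delta_t(x,z)$. For the evolution \eqref{De11b}, split off the $k=0$ term of $\Delta_{t+1}(x,y)$, which equals $\xi_{t+1}(y)-\xi_{t+1}(x)$ because $p^0(y,z)=\ind_{y=z}$; in the remaining terms substitute $p^{k}(y,z)=\sum_{u}p(0,u)\,p^{k-1}(y+u,z)$, reindex $k\mapsto k-1$, and pull the finite sum over $u$ outside (a finite linear combination of convergent series) to recognize $\sum_u p(0,u)\,\Delta_t(x+u,y+u)$. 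Specializing \eqref{De11b} to $y-x=e_i$ and collecting the $d$ coordinates shows that the field $\Delta_\centerdot$ of \eqref{De16} solves the increment recursion \eqref{eta-11}.

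Finally, for the Markov, stationarity and ergodicity statements: since $\Delta_t$ is $\sigma(\xi_s:s\le t)$-measurable it is independent of $\xi_{t+1}$, and \eqref{eta-11} exhibits $\Delta_{t+1}$ as a fixed measurable function of the pair $(\Delta_t,\xi_{t+1})$; together these say precisely that $(\Delta_t)_{t\in\Z}$ is a Markov chain with the transition mechanism of \eqref{eta-11}. A change of variables in \eqref{De7} yields the equivariance relation $\Delta_t(x)(\xi)=\Delta_0(0)(\theta_{x,t}\xi)$, i.e.\ the map $\Phi:\xi\mapsto\{\Delta_t(x)\}_{x\in\Z^d,\,t\in\Z}$ intertwines each space-time shift on the $\xi$-space with the corresponding shift on $(\R^d)^{\Z^d\times\Z}$. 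Because $\P$ is the law of an i.i.d.\ field it is invariant under all space-time shifts, so the push-forward law of the whole field is shift-invariant; restricting to a fixed time slice and using the Markov property gives $\pi_0\in\cI$. For a fixed $(x,t)\ne(0,0)$, the $\P$-law is moreover mixing — hence ergodic — under the single shift $\theta_{x,t}$, since any two finite-dimensional cylinder events of an i.i.d.\ field have disjoint coordinate supports after sufficiently many applications of $\theta_{x,t}$; ergodicity then transfers to the factor $\Phi_*\P$ under $\theta_{x,t}$ via the commutation $\Phi\circ\theta_{x,t}=\theta_{x,t}\circ\Phi$.

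I do not expect a genuine obstacle: almost everything is already set up in the text preceding the statement, and the rest is bookkeeping. The one step that truly relies on outside input — and that fails without \eqref{w-ass}, as the Remark shows — is the finiteness of the potential kernel $a$ in the recurrent dimensions $d=1,2$, for which I would simply cite \cite{spitzer}. The only other point worth stating carefully is the "factor of an ergodic system stays ergodic under the same single shift" argument in the last paragraph.
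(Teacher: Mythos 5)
Your proposal follows the same route the paper itself takes: Theorem \ref{eta-thm-1} is presented there as a summary of the preceding development, with the variance computation \eqref{De7.5}, the convergence of the potential kernel \eqref{t37-2-1} (Spitzer's Sections 28 and 12 in $d\in\{1,2\}$, transience of the $q$-walk in $d\ge3$), Kolmogorov's one-series theorem, term-by-term verification of \eqref{add} and \eqref{De11b}, and the equivariance $\Delta_t(x)(\xi)=\Delta_0(0)(\theta_{x,t}\xi)$ doing exactly the work you assign to them. One correction is needed in your variance step: assumption \eqref{w-ass} does \emph{not} make $p$ symmetric (the paper allows a nonzero drift $\pmu$), so $p^k(x,z)\ne p^k(z,x)$ in general and $q\ne p^{2}$. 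The identity $\sum_z\bigl(p^k(y,z)-p^k(x,z)\bigr)^2=2\bigl(q^k(0,0)-q^k(x-y,0)\bigr)$ is still correct, but it comes from $q^k(0,u)=\sum_w p^k(0,w)\,p^k(0,u+w)$ --- i.e.\ $q$ of \eqref{21-d2} is the (automatically symmetric) law of the difference of two independent $p$-walks --- not from Chapman--Kolmogorov applied to a symmetric $p$. With that substitution the argument is complete; everything else (the telescoping for additivity, the $k=0$ split and reindexing for \eqref{De11b}, and the factor-of-a-mixing-system argument for ergodicity under each individual $\theta_{x,t}$) is correct and matches the paper.
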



 Next we add  harmonic functions to  the variables $\Delta_t(x-e_i,x)$ to produce invariant distributions with nonzero means  and address  
    uniqueness and convergence.    
    
    A function $v:\Z^d\to\R$ is {\it harmonic}  for transition probability kernel  $p$  if 
  \be\label{p-harm}  \sum_{y\in\Z^d}  p(x,y)v(y)=v(x) \qquad\text{ for all $x\in\Z^d$. }\ee
    Constants are the only bounded harmonic functions (Lemma \ref{lm-p-harm} in the appendix)   but there can be many  unbounded ones. 
    Let $\cH_d$ denote the space of harmonic functions $u:\Z^d\to\R^d$, in other words, the space of vectors  
 $u=(u_1,\dotsc, u_d)$ of $d$ real-valued harmonic functions $u_i:\Z^d\to \R$.
 Given $u\in\cH_d$,    define a process $\Delta^u_t=u+\Delta_t$ by
\be\label{De-u}  \Delta^u_t(x-e_i,x)=u_i(x)+ \Delta_t(x-e_i,x), \qquad   t\in\Z, \; x\in\Z^d, \; i\in[d],   \ee 
where   $\Delta_t(x-e_i,x)$ is  defined by  \eqref{De7}. 
   $\Delta^u_\centerdot$ is another time-stationary and ergodic  process whose evolution obeys   \eqref{eta-11}.
  Let   $\pi_u\in\cI$  denote the distribution of the configuration   $\Delta^u_t$.  
 
For nonconstant $u$,   $\Delta^u_\centerdot$ may fail additivity   \eqref{add}.    We are now considering the broader class of processes defined by \eqref{eta-11}, regardless of whether this process comes from a height process. 
 
  We record the invariant distributions $\pi_u$ in the next  theorem. That $\pi_u$ is an {\it extreme point}  of $\cI$ means that if $\pi_u=b\mu+(1-b)\nu$ 
  for $0<b<1$ and $\mu,\nu\in\cI$, then $\mu=\nu=\pi_u$.  
 
 \begin{theorem} \label{thm-pi-u} {\rm(Existence.)}     Assume \eqref{w-ass} and \eqref{xi-ass}. 
 Then  the measures $\{\pi_u: u\in\cH_d\}$ are extreme points of $\cI$.  They are also the unique invariant distributions of minimal variance:    for any $\nu\in\cI$ with finite variances, 
     \be\label{nu-var1-4} \Var^\nu[\eta(x-e_i,x)]\ge \Var^{\pi_0}[\eta(x-e_i,x)],  \ee
  and equality   holds for all  $i\in[d]$ and $x\in\Z^d$ iff $\nu\in\{\pi_u: u\in\cH_d\}$. 
\end{theorem}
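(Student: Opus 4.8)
The plan is to realize every stationary solution of \eqref{eta-11} explicitly in terms of the noise, using the backward iteration \eqref{De11a} together with the variance formula \eqref{De7.5}. Given $\nu\in\cI$ with finite variances, I would realize it as a time-stationary process $(\eta_t)_{t\in\Z}$ driven by an i.i.d.\ noise field $(\xi_t)_{t\in\Z}$ through \eqref{eta-11} (by running the one-sided chain from $\eta_{-n}\sim\nu$ and passing to the limit), so that for each $t$ and $s\in\N$ the noise block $\xi_{t-s+1},\dots,\xi_t$ is independent of $\eta_{t-s}$, and build $\Delta_t$ from the same noise via \eqref{De7}. Specializing \eqref{De11a} to nearest-neighbor increments then yields $\eta_t(x-e_i,x)=A_s+B_s$, where $A_s=\sum_z p^s(0,z)\,\eta_{t-s}(x+z-e_i,x+z)$ is a $p^s$-average of the configuration $s$ steps in the past and $B_s=\sum_{k=0}^{s-1}\sum_z\xi_{t-k}(z)\bigl(p^k(x,z)-p^k(x-e_i,z)\bigr)$ is the $s$-step truncation of the series defining $\Delta_t(x-e_i,x)$. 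By the Markov construction $A_s$ and $B_s$ are independent; $\E B_s=0$; and by \eqref{De7.5}, $B_s\to\Delta_t(x-e_i,x)$ in $L^2$ with $\E[B_s^2]\uparrow 2\sigma_\xi^2a(e_i)=\Var^{\pi_0}[\eta(x-e_i,x)]$. Hence $\Var^\nu[\eta(x-e_i,x)]=\Var(A_s)+\E[B_s^2]$ for every $s$, and letting $s\to\infty$ gives \eqref{nu-var1-4}.

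For the equality assertion I would observe that $A_s=\eta_t(x-e_i,x)-B_s\to w_t(x)_i:=\eta_t(x-e_i,x)-\Delta_t(x-e_i,x)$ in $L^2$, while $\E A_s=\E^\nu[\eta(x-e_i,x)]$ for every $s$; hence $\Var(w_t(x)_i)=\Var^\nu[\eta(x-e_i,x)]-\Var^{\pi_0}[\eta(x-e_i,x)]$. If equality holds in \eqref{nu-var1-4} at every coordinate $(i,x)$, then each $w_t(x)_i$ equals a.s.\ the constant $u_i(x):=\E^\nu[\eta(x-e_i,x)]$, so $\eta_t=\Delta_t+u$ a.s.\ for all $t$, and therefore $\nu=\pi_u$. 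That $u=(u_1,\dots,u_d)\in\cH_d$ follows by subtracting the evolution equations \eqref{eta-11} and \eqref{De11b}: the noise cancels and $w_\centerdot$ obeys the noiseless recursion $w_{t+1}(x)_i=\sum_z p(0,z)\,w_t(x+z)_i$; since $w_t\equiv u$ for all $t$, this is exactly the harmonicity \eqref{p-harm}. The converse is immediate, because under $\pi_u$ the spin $\eta(x-e_i,x)=u_i(x)+\Delta_t(x-e_i,x)$ is a constant plus a mean-zero variable of variance $\Var^{\pi_0}[\eta(x-e_i,x)]$.

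For extremality, suppose $\pi_u=b\mu+(1-b)\nu$ with $0<b<1$ and $\mu,\nu\in\cI$. From $\E^{\pi_u}[\eta(x-e_i,x)^2]=b\,\E^\mu[\eta(x-e_i,x)^2]+(1-b)\,\E^\nu[\eta(x-e_i,x)^2]<\infty$ and nonnegativity of the two terms, $\mu$ and $\nu$ have finite variances. Writing $m_1,V_1$ and $m_2,V_2$ for the mean and variance of $\eta(x-e_i,x)$ under $\mu$ and $\nu$, and $V_0$ for its variance under $\pi_u$ (which equals its $\pi_0$-variance), expanding the variance of the two-point mixture gives $V_0=bV_1+(1-b)V_2+b(1-b)(m_1-m_2)^2$. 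Since $V_1,V_2\ge V_0$ by \eqref{nu-var1-4}, this forces $V_1=V_2=V_0$ and $m_1=m_2$ at every coordinate, so by the equality characterization just established $\mu=\pi_{u'}$ and $\nu=\pi_{u''}$ with $u'_i(x)=m_1(i,x)=m_2(i,x)=u''_i(x)$; hence $\mu=\nu=\pi_u$, and $\pi_u$ is an extreme point of $\cI$.

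I expect the main obstacle to be the clean setup of the bi-infinite stationary version in which $\eta_{t-s}$ is independent of the noise driving the steps in $(t-s,t]$, together with the point that the ``harmonic remainder'' $w_t=\eta_t-\Delta_t$ need not vanish --- a contribution survives from the infinite past, so one cannot simply assert that the noise determines $\eta_t$. The key is to pin $w_t$ down via the $L^2$ bookkeeping: vanishing variance of the averaged piece $A_s$, combined with $A_s\to w_t$ in $L^2$ and $\E A_s$ constant in $s$, forces $w_t$ to be a deterministic configuration, which the noiseless recursion then shows is harmonic. Once the decomposition $\eta_t(x-e_i,x)=A_s+B_s$ with $A_s\perp B_s$ is in hand, the remaining variance algebra --- including the concavity-of-variance computation for extremality --- is routine.
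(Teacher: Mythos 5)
Your proposal is correct and follows essentially the same route as the paper: the decomposition of $\eta_t$ into an independent $p^s$-average of the past configuration plus the truncated noise series, variance additivity yielding \eqref{nu-var1-4} in the limit $s\to\infty$, and the mixture-variance expansion for extremality are exactly the paper's argument. The only (cosmetic) difference is in concluding the equality case: you build a bi-infinite stationary coupling and identify $\eta_t=u+\Delta_t$ almost surely, whereas the paper avoids the two-sided extension by showing that linear combinations of the averaged term vanish in $L^2$ and the truncated noise sums converge in distribution to the corresponding linear combinations of $\Delta_0$, which gives the distributional identity $\nu=\pi_u$ directly.
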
  
 

 
 The next   theorem  shows that these are the unique extreme  invariant distributions    under  a growth condition on the centered  first moment of $\eta(x)$.

\begin{theorem}\label{thm-uniq} {\rm(Uniqueness.)}   Assume \eqref{w-ass} and \eqref{xi-ass}. 
Let   $\nu\in\cI$ satisfy  these properties:   $E^\nu\abs{\eta(x)}<\infty$ $\forall x\in\Z^d$ and  there exists $u\in\cH_d$ such that 
\be\label{nu-mom}    \lim_{r\to\infty}  r^{-1/2} \cdot \max_{x\in\Z^d:\,\abs{x}\le r} E^\nu\bigl\{ \,\absb{\,\eta(x)- u(x)\,} \,\bigr\} \ = \ 0.  
\ee
Then \eqref{nu-mom} holds also if $u(x)$ is replaced by  $\bar u(x)=E^\nu[\eta(x)]$. There exists a  probability measure $\gamma$ on $\R^d$ such that $\nu=\int_{\R^d}  \pi_{\bar u+\alpha}\,\gamma(d\alpha)$.  
\end{theorem}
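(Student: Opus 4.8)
The plan is to reduce to a centered process, couple it to the canonical process $(\Delta_t)_{t\in\Z}$ of \eqref{De7} through a shared noise, and prove that the difference of the two is a single random constant vector.

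\emph{Reductions.} Taking $\mE^\nu$-expectations in \eqref{eta-11}, and using invariance of $\nu$, finiteness of $E^\nu|\eta(x)|$, $\E[\xi]=0$ and the finite support of $p$, one gets $\bar u_i(x)=\sum_z p(0,z)\bar u_i(x+z)$, so $\bar u=(\bar u_i)_{i\in[d]}\in\cH_d$. Since $|\bar u(x)-u(x)|=|E^\nu[\eta(x)-u(x)]|\le E^\nu|\eta(x)-u(x)|$, we get $E^\nu|\eta(x)-\bar u(x)|\le 2\,E^\nu|\eta(x)-u(x)|$, which proves the first assertion. As $\bar u$ is harmonic, the centered process $\tilde\eta_t=\eta_t-\bar u$ again obeys \eqref{eta-11}, its law $\tilde\nu$ lies in $\cI$, $E^{\tilde\nu}[\tilde\eta(x)]=0$, and $\phi(r):=\max_{|x|\le r}E^{\tilde\nu}|\tilde\eta(x)|=o(r^{1/2})$. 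It suffices to produce $\gamma$ with $\tilde\nu=\int\pi_\alpha\,\gamma(d\alpha)$, since adding $\bar u$ back then gives $\nu=\int\pi_{\bar u+\alpha}\,\gamma(d\alpha)$. So I assume $\bar u\equiv0$ and drop the tildes.

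\emph{The difference process is a random constant.} Realize the two-sided stationary version $(\eta_t)_{t\in\Z}$ of the chain \eqref{eta-11} on a space carrying i.i.d.\ noise $(\xi_t)_{t\in\Z}$, with $\eta_{t+1}$ produced from $\eta_t$ and $\xi_{t+1}$ via \eqref{eta-11} and $\eta_t$ independent of $\{\xi_s:s>t\}$ (the standard innovation representation), and build $(\Delta_t)_{t\in\Z}$ from the same noise by \eqref{De7}. Set $W_t(w)_i=\eta_t(w-e_i,w)-\Delta_t(w-e_i,w)$. Subtracting \eqref{De11b} from \eqref{eta-11} cancels the noise, so $W_t(\cdot)_i=P\,W_{t-1}(\cdot)_i$ with $(Pf)(x)=\sum_y p(x,y)f(y)$; hence $W_t(x)_i=\sum_w p^s(x,w)\,W_{t-s}(w)_i$ for every $s\in\N$. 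By marginal stationarity of $(\eta_t)$ and of $(\Delta_t)$ together with \eqref{De7.5}, $\|W_t(w)_i\|_{L^1}\le\phi(|w|)+C_0$ with $C_0=\max_i\|\Delta_0(-e_i,0)\|_{L^2}<\infty$. Therefore, for nearest neighbors and any $s$,
\[
\|W_t(x)_i-W_t(x-e_j)_i\|_{L^1}\le\sum_w\bigl|p^s(x,w)-p^s(x-e_j,w)\bigr|\,\bigl(\phi(|w|)+C_0\bigr).
\]
Split the sum at $|w-x-s\pmu|=(\log s)\sqrt s$. Outside, $\sum_w\bigl(p^s(x,w)+p^s(x-e_j,w)\bigr)$ is super-polynomially small by concentration for sums of bounded i.i.d.\ steps, which beats the crude bound $\phi(|x|+Ms)+C_0=o(\sqrt s)$ on the support of $p^s$. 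Inside, $\phi(|w|)+C_0\le\phi(|x|+s|\pmu|+(\log s)\sqrt s)+C_0=o(\sqrt s)$ (in both cases $\pmu\ne0$ and $\pmu=0$), while $\sum_w\bigl|p^s(x,w)-p^s(x-e_j,w)\bigr|\le C s^{-1/2}$ by the local CLT for strongly aperiodic finite-range walks. Hence the right side $\to0$ as $s\to\infty$, forcing $W_t(x)_i=W_t(x-e_j)_i$ a.s.; chaining over nearest neighbors, $W_t(\cdot)_i\equiv A_t^{(i)}$ for a random $A_t=(A_t^{(i)})_{i\in[d]}\in\R^d$. Since $W_{t-1}(\cdot)_i$ is the constant function $A_{t-1}^{(i)}$, the relation $W_t(\cdot)_i=PW_{t-1}(\cdot)_i$ and $P\ind=\ind$ give $A_t=A_{t-1}$ a.s., hence $A_t\equiv\alpha$ for one $\R^d$-valued $\alpha$, and $\eta_t(w-e_i,w)=\Delta_t(w-e_i,w)+\alpha_i$ for all $t,w,i$ a.s.

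\emph{Identifying $\gamma$.} Iterating \eqref{eta-11} backward $s$ steps at $t=0$, $x=0$ gives $\eta_0(-e_i,0)=\sum_w p^s(0,w)\eta_{-s}(w-e_i,w)+D_s^{(i)}$, where $D_s^{(i)}$ is the $s$-term partial sum of the series \eqref{De7} for $\Delta_0(-e_i,0)$, so $D_s^{(i)}\to\Delta_0(-e_i,0)$ in $L^2$. Thus $B_s^{(i)}:=\sum_w p^s(0,w)\eta_{-s}(w-e_i,w)=\alpha_i+\bigl(\Delta_0(-e_i,0)-D_s^{(i)}\bigr)\to\alpha_i$ in $L^2$, while each $B_s^{(i)}$ is $\sigma(\eta_{-s})$-measurable, hence independent of $\{\xi_u:u\ge-s+1\}$. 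Letting $s\to\infty$, $\alpha$ is independent of every finite subfamily of $(\xi_u)_{u\in\Z}$, hence of the whole noise, hence of $\Delta_0$. Consequently $\eta_0=\Delta_0+\alpha$ with $\alpha\perp\Delta_0$, so $\nu=\int_{\R^d}\pi_\alpha\,\gamma(d\alpha)$ with $\gamma$ the law of $\alpha$ (note $\int|\alpha|\,\gamma(d\alpha)=\mE|\alpha|<\infty$ and $\int\alpha\,\gamma(d\alpha)=0$); undoing the centering gives $\nu=\int\pi_{\bar u+\alpha}\,\gamma(d\alpha)$.

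\emph{Main obstacle.} The crux is the estimate in the middle step: it requires the rate $\|p^s(x,\cdot)-p^s(x',\cdot)\|_{\ell^1}=O(|x-x'|\,s^{-1/2})$ with no logarithmic loss, which is exactly matched by the exponent $\tfrac12$ in \eqref{nu-mom}; establishing it needs the local CLT with one Edgeworth term, so that the approximation error, summed over the $O(s^{d/2})$ sites of the bulk, stays $o(s^{-1/2})$. The remaining ingredients — the stationary innovation representation of the chain and the measurability of $\alpha$ with respect to the noise — are soft.
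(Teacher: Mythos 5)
Your proof is correct, and it takes a genuinely different route from the one in the paper, although both arguments turn on exactly the same quantitative tension: the smoothing bound $\sum_x\abs{p^t(0,x)-p^t(0,x-e_\ell)}\le Ct^{-1/2}$ of \eqref{gram3} played against the $o(r^{1/2})$ growth of the centered first moment in \eqref{nu-mom} (your window-splitting around $x+s\pmu$ is not even needed: since $p$ has range $M$, the crude bound $\phi(\abs x+Ms)+C_0=o(\sqrt s)$ over the whole support of $p^s(x,\cdot)$ already suffices). The paper argues distributionally and forward in time: it first shows via characteristic functions that the centered $\nu$ is invariant under spatial shifts (Lemma \ref{lm-nu-goal} supplies the key estimate), then takes the ergodic decomposition of $\nu$ and applies the convergence theorem (Theorem \ref{eta-thm-2}, which in turn rests on the Fourier-analytic ergodic Lemma \ref{lm-fourier}) to identify each ergodic component with some $\pi_\alpha$. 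You instead argue pathwise and backward: you couple a two-sided stationary realization of \eqref{eta-11} to the noise, note that the difference $W=\eta-\Delta$ evolves by the deterministic averaging operator $P$ so that the noise cancels, and use the same smoothing estimate to force $W$ to be an a.s.\ constant $\alpha$, which you then show is independent of the noise by backward iteration. Your route bypasses spatial shift-invariance, the ergodic decomposition, and the appendix lemma altogether, and it delivers something slightly stronger than the theorem asks for: an almost-sure coupling $\eta_t=\Delta_t+\alpha$ on a common probability space, not merely the mixture identity for the law. The price is the two-sided stationary innovation representation, which you correctly flag as soft but which deserves one sentence of justification: the laws of $\bigl((\eta_t)_{t\ge-n},(\xi_s)_{s\ge-n+1}\bigr)$ obtained by starting the chain at time $-n$ from $\nu$ independent of the noise form a consistent family by invariance of $\nu$, and Kolmogorov extension produces the bi-infinite coupled object. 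With that sentence added, the argument is complete.
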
  

Above $\pi_{\bar u+\alpha}$ is the measure $\pi_u$ for the harmonic function $u(x)=\bar u(x)+\alpha$ in $\cH_d$.  


  
 \begin{remark}
The key to the proof of Theorem \ref{thm-uniq} is the local central limit theorem applied to the random walk $p^t(0,x)$.   The factor $r^{-1/2} $ in assumption \eqref{nu-mom}  comes from a smoothness bound of Gamkrelidze \cite[Theorem 4]{Gamk-85}:
  \be\label{gram3}
\sum_{x\in\Z^d}\sum_{\ell=1}^d \abs{p^t(0,x)-p^t(0, x-e_\ell)} \le C t^{-1/2}  
\quad \forall t\in\Z_+.  
\ee
For an irreducible, aperiodic kernel $p$ the same estimate is given in Prop.~2.4.2 of \cite{lawl-limi}. 
 \end{remark} 
  
  The third item is convergence to an invariant distribution.  We currently have a result only when the centered  initial distribution  is of this type:
\be\label{zeta-7} \begin{aligned}
 &\text{$\zeta=\{\zeta(x)\}_{x\in\Z^d}$ is an $(\R^d)^{\Z^d}$-valued random configuration whose distribution }\\
 &\text{is invariant and ergodic 
 under the spatial shift group $\{\theta_{x,0}\}_{x\in\Z^d}$, and } \\&\text{the $\R^d$-valued variable   $\zeta(x)=(\zeta(x,i))_{i\in[d]}$ has mean $E\zeta(x)=0$.}  
 \end{aligned}\ee

 \begin{theorem}\label{eta-thm-2} {\rm(Convergence.)}   Assume \eqref{w-ass} and \eqref{xi-ass}. 
Let $\{\eta_t\}_{t\in\Z_+}$ be an  $(\R^d)^{\Z^d}$-valued process  
whose  evolution is  defined by \eqref{eta-11}.  
Assume that the initial distribution of the process $\eta_t$  is of the following form: $\eta_0(x-e_i,x)=u_i(x) + \zeta(x,i)$ for $x\in\Z^d$ and $i=1,\dotsc,d$  where $u=(u_1,\dotsc, u_d)\in\cH_d$ and 
$\zeta$ is as in \eqref{zeta-7}.   Let $\mu_t$ denote the distribution of $\eta_t$.  Then, as $t\to\infty$,  $\mu_t\to \pi_u$ weakly in $\cM_1$ .   

\end{theorem}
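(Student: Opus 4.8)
First I would remove the harmonic function $u$. Since each $u_i$ is harmonic, $\sum_z p(0,z)u_i(x+z)=u_i(x)$ by \eqref{p-harm}, so \eqref{eta-11} shows by induction on $t$ that a process driven by the noise $\{\xi_s\}$ and started from $\eta_0(x-e_i,x)=u_i(x)+\zeta(x,i)$ satisfies $\eta_t(x-e_i,x)=u_i(x)+\widetilde\eta_t(x-e_i,x)$, where $\widetilde\eta_\centerdot$ obeys \eqref{eta-11} (with the same noise) from the initial configuration $\widetilde\eta_0(x-e_i,x)=\zeta(x,i)$. Hence $\mu_t$ is the image of the law $\widetilde\mu_t$ of $\widetilde\eta_t$, and $\pi_u$ the image of $\pi_0$ (see \eqref{De-u}), under the continuous map $\eta\mapsto u+\eta$ on $(\R^d)^{\Z^d}$, so it suffices to prove $\widetilde\mu_t\to\pi_0$ weakly, where $\pi_0$ is the law of $\Delta_0$. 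I therefore take $u\equiv0$.

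Next, iterate \eqref{eta-11} back to time $0$ as in \eqref{De11a}:
\[
\widetilde\eta_t(x-e_i,x)=\sum_z p^t(0,z)\,\zeta(x+z,i)\;+\;\sum_{k=0}^{t-1}\sum_z\xi_{t-k}(z)\bigl(p^k(x,z)-p^k(x-e_i,z)\bigr).
\]
The two sums are independent, $\zeta$ is independent of $(\xi_1,\dots,\xi_t)$, and the $\xi$'s are i.i.d., so relabeling the noise shows that $\widetilde\eta_t$ has the same law as $A_t+B_t$, defined on a single probability space carrying an i.i.d.\ array $\{\xi_s(z)\}_{s\in\Z,\,z\in\Z^d}$ and an independent configuration $\zeta'$ distributed as $\zeta$, by
\[
A_t(x,i)=\sum_z p^t(0,z)\,\zeta'(x+z,i),\qquad B_t(x,i)=\sum_{k=0}^{t-1}\sum_z\xi_{-k}(z)\bigl(p^k(x,z)-p^k(x-e_i,z)\bigr).
\]
Here $A_t$ and $B_t$ are independent, the sum defining $A_t$ is finite since $p$ is finitely supported, and $B_t(x,i)$ is the $t$-term partial sum of the almost surely and $L^2$-convergent series \eqref{De7} at time $0$, so $B_t\to\Delta_0$ coordinatewise almost surely by Theorem \ref{eta-thm-1}. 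Granting that $A_t(x,i)\to0$ in probability for each $x$ and $i$, it follows that $A_t+B_t\to\Delta_0$ coordinatewise in probability, hence in probability for the metrizable product topology on $(\R^d)^{\Z^d}$, hence in distribution; since $A_t+B_t\sim\widetilde\mu_t$ and $\Delta_0\sim\pi_0$, this yields $\widetilde\mu_t\to\pi_0$ weakly and proves the theorem.

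It remains to show $A_t(x,i)\to0$ in probability, and this is the one delicate point, because $\zeta$ is assumed only integrable. I would truncate: for $K<\infty$ set $c_{K,i}=E\bigl[\zeta(0,i)\ind_{\{\abs{\zeta(0,i)}\le K\}}\bigr]$ and $\zeta(v,i)=\zeta^{(K)}(v,i)+\widetilde\zeta^{(K)}(v,i)$ with $\zeta^{(K)}(v,i)=\zeta(v,i)\ind_{\{\abs{\zeta(v,i)}\le K\}}-c_{K,i}$. Both fields are spatially stationary; $\zeta^{(K)}$ is bounded, mean zero, and ergodic (a factor of $\zeta$); and $\norm{\widetilde\zeta^{(K)}(0,i)}_{L^1}\le 2E\bigl[\abs{\zeta(0,i)}\ind_{\{\abs{\zeta(0,i)}>K\}}\bigr]\to0$ as $K\to\infty$. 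Split $A_t=A_t^{(K)}+\widetilde A_t^{(K)}$ accordingly. The remainder satisfies $\norm{\widetilde A_t^{(K)}(x,i)}_{L^1}\le\sum_z p^t(0,z)\norm{\widetilde\zeta^{(K)}(0,i)}_{L^1}=\norm{\widetilde\zeta^{(K)}(0,i)}_{L^1}$, uniformly in $t$. For the bounded part, let $\rho_{K,i}$ be the spectral measure on $[-\pi,\pi)^d$ of the real stationary process $v\mapsto\zeta^{(K)}(v,i)$ (Herglotz's theorem) and $\widehat p(\lambda)=\sum_x p(x)e^{\iota\lambda\cdot x}$; then, by stationarity,
\[
E\bigl[A_t^{(K)}(x,i)^2\bigr]=\int_{[-\pi,\pi)^d}\abs{\widehat p(\lambda)}^{2t}\,\rho_{K,i}(d\lambda)\ \longrightarrow\ \rho_{K,i}(\{0\})\qquad(t\to\infty)
\]
by dominated convergence, since strong aperiodicity in \eqref{w-ass} forces $\abs{\widehat p(\lambda)}<1$ for all $\lambda\ne0$ in $[-\pi,\pi)^d$. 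Finally $\rho_{K,i}(\{0\})$ equals $\lim_{R\to\infty}E\bigl[\bigl(\abs{B_R}^{-1}\sum_{v\in B_R}\zeta^{(K)}(v,i)\bigr)^2\bigr]$ over cubes $B_R=\{v\in\Z^d:\abs{v_j}\le R\text{ for all }j\}$, which is $0$ by the mean ergodic theorem since $\zeta^{(K)}$ is ergodic with mean zero. Hence $A_t^{(K)}(x,i)\to0$ in $L^2$, and combining this with the uniform $L^1$ bound on $\widetilde A_t^{(K)}$ (first choose $K$ large, then let $t\to\infty$) gives $A_t(x,i)\to0$ in probability. I expect this truncation step---passing from a square-integrable to a merely integrable initial field while keeping the truncated part ergodic and mean zero, so that its spectral measure has no atom at the origin---to be the main obstacle.
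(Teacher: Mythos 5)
Your proposal is correct and follows the same architecture as the paper's proof: iterate \eqref{eta-11} to write $\eta_t$ as $u$ plus the $p^t$-smoothed initial fluctuation plus the independent noise sum $\Delta_{0,t}$, send the noise sum to $\Delta_0$ in distribution, and kill the smoothed initial term. The paper delegates that last step to Lemma \ref{lm-fourier}, whose proof uses the same ingredients you do (truncation to a bounded field, Herglotz's theorem, and $\abs{\widehat p(\lambda)}<1$ for $\lambda\ne 0$); the only genuine divergence is in how the limiting constant is identified. The paper shows $g_t(x,r,\cdot)$ is Cauchy in $L^2(\nu)$, proves the limit is a bounded $p$-harmonic function of $x$, invokes Lemma \ref{lm-p-harm} to make it constant in $x$, and then uses ergodicity of $\nu$ under shifts; you instead compute $E[A_t^{(K)}(x,i)^2]\to\rho_{K,i}(\{0\})$ directly and annihilate the atom at the origin via the mean ergodic theorem for the ergodic, mean-zero truncated field. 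Your route is marginally shorter since it bypasses the bounded-harmonic-function lemma and the Cauchy argument, at the cost of quoting the $\Z^d$ mean ergodic theorem and the Fej\'er-kernel identification of $\rho(\{0\})$; both are standard, and all the estimates you give (the stationarity bound $\norm{\widetilde A^{(K)}_t(x,i)}_{L^1}\le\norm{\widetilde\zeta^{(K)}(0,i)}_{L^1}$, the two-step limit $K\to\infty$ after $t\to\infty$) are sound.
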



\begin{remark}[Spatially invariant case] 
Let us spell out the most natural special case.   Given a constant    $\alpha\in\R^d$,  there is a unique   $\pi_\alpha\in\cI$   that is invariant  and ergodic for the  process $\eta_t$ defined by  \eqref{eta-11},   invariant and ergodic under the spatial shift group $\{\theta_{x,0}\}_{x\in\Z^d}$,  and has mean  
$E^{\pi_\alpha}[\eta(x)]=\alpha$.   $\pi_\alpha$  is   the distribution of the $(\R^d)^{\Z^d}$-valued random configuration $\{\zeta(x)\}_{x\in\Z^d}$ defined by 
$\zeta(x)=\alpha +(\Delta(x-e_i,x))_{i\in[d]}$.    Furthermore,  the process \eqref{eta-11} started with an arbitrary  mean-$\alpha$ spatially invariant and ergodic initial distribution converges weakly to $\pi_\alpha$.   
\end{remark}

 In the one-dimensional case that is the subject of the next section,  $\pi_0$ is the distribution of the sequence 
 $\{\Delta_0(x)\}_{x\in\Z}$ where  $\Delta_0(x)=\Delta_0(x-1,x)$ is defined by the series \eqref{De7}.  
The next theorem collects  properties of the shift-invariant covariance 
\[  V_0(x,y)=\E[\Delta_0(x)\Delta_0(y)]=E^{\pi_0}[\eta(x)\eta(y)]=E^{\pi_0}[\eta(0)\eta(y-x)]=V_0(0,y-x) \]
of $\pi_0$ in $d=1$.   The span 1 assumption on $p$ implies that the   support of $q$ cannot lie in a proper subgroup of  $\Z$, and thereby the characteristic function $\phi_q(\theta)=\sum_{x\in\bZ}q(0,x)e^{\iota \theta x}$ equals 1 only at  $\theta\in 2\pi\Z$.

\begin{theorem}\label{thm-cov}   Assume \eqref{w-ass}, \eqref{xi-ass}, and   $d=1$.   Then  $\exists$ constants $0<A,c<\infty$ such that   $\abs{V_0(0,x)} \le Ae^{-c\abs x}$.  
We have the identities
\begin{align}
V_0(0,x)&=\sigma_\xi^2[a(x-1)+a(x+1)-2a(x)]   
=\frac{\sigma_\xi^2}{\pi}\int_{-\pi}^{\pi}\frac{1-\cos\theta}{1-\phi_q(\theta)}e^{\iota \theta x} d\theta, \quad x\in\bZ,     \label{cov=}
\end{align} 
and 
 \be\label{cov-sum}  \sum_{x\in\bZ}  V_0(0,x)=\frac{\sigma_\xi^2}{\sigma_1^2}\ee
 where the series above is absolutely convergent.  
\end{theorem}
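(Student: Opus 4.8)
The plan is to express $V_0(0,x)$ through the symmetrized walk $q$, read off both identities, and then extract the exponential decay by a contour deformation.
\textbf{Reduction to $q$.} By Theorem~\ref{eta-thm-1}, $\Delta_0(x)=\Delta_0(x-1,x)=\sum_{k\ge0}\sum_{z\in\Z}\xi_{-k}(z)\bigl(p^k(x,z)-p^k(x-1,z)\bigr)$, the series converging in $L^2(\P)$. Using continuity of the $L^2$ inner product and the fact that the $\xi_{-k}(z)$ are i.i.d.\ mean zero with variance $\sigma_\xi^2$, only diagonal terms survive:
\[
V_0(0,x)=\sigma_\xi^2\sum_{k\ge0}\ \sum_{z\in\Z}\bigl(p^k(0,z)-p^k(-1,z)\bigr)\bigl(p^k(x,z)-p^k(x-1,z)\bigr).
\]
I will note this double sum converges absolutely: the $z$-sum is finite for each $k$, and by Cauchy--Schwarz with the per-$k$ variance identity behind \eqref{De7.5} its modulus is $\le 2\bigl(q^k(0,0)-q^k(0,1)\bigr)$, which sums to $2a(1)<\infty$ by \eqref{t37-2-1}. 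The same identity gives $\sum_z p^k(a,z)p^k(b,z)=q^k(0,b-a)$ (the autocorrelation of $p^k$ is $q^k$), so the $z$-sum collapses and
\[
V_0(0,x)=\sigma_\xi^2\sum_{k\ge0}\bigl[\,2q^k(0,x)-q^k(0,x-1)-q^k(0,x+1)\,\bigr].
\]

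\textbf{The two identities.} In the $k$-th summand the three copies of $q^k(0,0)$ carry coefficients $1,1,-2$ and cancel, so it equals $\bigl[q^k(0,0)-q^k(0,x-1)\bigr]+\bigl[q^k(0,0)-q^k(0,x+1)\bigr]-2\bigl[q^k(0,0)-q^k(0,x)\bigr]$; summing over $k$, each of the three resulting series is the absolutely convergent defining series \eqref{t37-2-1} of the potential kernel, giving $V_0(0,x)=\sigma_\xi^2\bigl[a(x-1)+a(x+1)-2a(x)\bigr]$. For the Fourier form I substitute $q^k(0,y)=\frac1{2\pi}\int_{-\pi}^\pi\phi_q(\theta)^k e^{\iota\theta y}\,d\theta$, so the $k$-th summand becomes $\frac1{2\pi}\int_{-\pi}^\pi\phi_q(\theta)^k\,2(1-\cos\theta)\,e^{\iota\theta x}\,d\theta$. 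Since $\phi_q(\theta)=\bigl|\sum_z p(z)e^{\iota\theta z}\bigr|^2\ge0$ and $\phi_q(\theta)\in[0,1)$ off $2\pi\Z$ (recall $\phi_q(\theta)=1$ only on $2\pi\Z$, noted before the theorem), Tonelli's theorem lets me interchange $\sum_k$ with $\int$ after taking absolute values; then $\sum_k\phi_q(\theta)^k=(1-\phi_q(\theta))^{-1}$ for a.e.\ $\theta$, and the resulting integral is finite because its integrand extends continuously by the value $1/\sigma_1^2$ at $\theta=0$ (as $\phi_q(\theta)=1-\sigma_1^2\theta^2+O(\theta^4)$ there, $q$ having mean $0$ and variance $2\sigma_1^2$). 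This yields $V_0(0,x)=\frac{\sigma_\xi^2}{\pi}\int_{-\pi}^\pi\frac{1-\cos\theta}{1-\phi_q(\theta)}e^{\iota\theta x}\,d\theta$.

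\textbf{Exponential decay (the crux).} Set $g(\theta)=\dfrac{1-\cos\theta}{1-\phi_q(\theta)}$. Since $q$ is finitely supported, $\phi_q$ is a trigonometric polynomial, hence entire. On $[-\pi,\pi]$ the denominator $1-\phi_q$ vanishes only at $\theta=0$, where --- because $\sigma_1^2>0$ in \eqref{p-6} --- it has a zero of order exactly $2$, matching that of $1-\cos\theta$; hence $g$ is analytic at $\theta=0$, and being analytic at every point of the compact interval $[-\pi,\pi]$ it extends analytically to a complex strip $\{|\Re\theta|\le\pi,\ |\Im\theta|<\rho\}$ for some $\rho>0$, where it is $2\pi$-periodic by the identity theorem. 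For $x\ge1$ I deform the contour in $V_0(0,x)=\frac{\sigma_\xi^2}{\pi}\int_{-\pi}^\pi g(\theta)e^{\iota\theta x}\,d\theta$ upward to $\Im\theta=\rho/2$; the two vertical edges cancel because $g$ is periodic and $e^{\iota(\pi+\iota t)x}=e^{\iota(-\pi+\iota t)x}$ for $x\in\Z$, leaving an integral of modulus $\le 2\sigma_\xi^2\bigl(\sup_{|\Im\theta|=\rho/2}\abs{g}\bigr)e^{-\rho x/2}$; for $x\le-1$ I deform downward. This gives $\abs{V_0(0,x)}\le Ae^{-c\abs{x}}$ with $c=\rho/2$. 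I expect this to be the main obstacle: the delicate point is precisely that strong aperiodicity and $\sigma_1^2>0$ together force $g$ to extend analytically across all of $[-\pi,\pi]$.

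\textbf{The sum.} Exponential decay makes $\{V_0(0,x)\}_{x\in\Z}$ absolutely summable, so the continuous $2\pi$-periodic function $G(\theta)=\frac{\sigma_\xi^2}{\pi}g(\theta)$, whose $x$-th Fourier coefficient is $\frac1{2\pi}V_0(0,x)$, is represented everywhere by its uniformly convergent Fourier series; evaluating at $\theta=0$,
\[
\frac1{2\pi}\sum_{x\in\Z}V_0(0,x)=G(0)=\frac{\sigma_\xi^2}{\pi}\lim_{\theta\to0}\frac{1-\cos\theta}{1-\phi_q(\theta)}=\frac{\sigma_\xi^2}{\pi}\cdot\frac1{2\sigma_1^2},
\]
whence $\sum_x V_0(0,x)=\sigma_\xi^2/\sigma_1^2$, the series being absolutely convergent. (Equivalently the first identity telescopes: with $b(x)=a(x)-a(x-1)$ one has $\sum_{\abs{x}\le N}V_0(0,x)=\sigma_\xi^2\bigl(b(N+1)-b(-N)\bigr)$, and Spitzer's asymptotics $a(x)\sim\abs{x}/(2\sigma_1^2)$ give $b(\pm\infty)=\pm1/(2\sigma_1^2)$.)
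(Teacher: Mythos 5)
Your proof is correct and follows essentially the same route as the paper: the identities come from the $L^2$ series for $\Delta_0$ together with Fourier inversion of $q^k$, and the exponential decay comes from the analyticity of $(1-\cos\theta)/(1-\phi_q(\theta))$ on $[-\pi,\pi]$ (removable singularity at $0$ plus span~1) — which the paper cites from Pinsky and you prove directly by contour shifting — while for \eqref{cov-sum} you evaluate the Fourier series at $\theta=0$ where the paper telescopes via $a(x+k)-a(x)\to\pm k/(2\sigma_1^2)$, an alternative you also note. The only blemish is the stated continuous-extension value $1/\sigma_1^2$ at $\theta=0$, which should be $1/(2\sigma_1^2)$ as you correctly use later; this does not affect the argument.
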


The first equality in  \eqref{cov=} comes from  a series like \eqref{De7.5} and the second equality is from  p.~355 of Spitzer \cite{spitzer}.    The rest is proved in Section \ref{sec:inv-pf} from properties of the kernel $a(x)$.  

\begin{remark}[$d=1$ Gaussian case]
Presently we cannot say  more about the distribution $\pi_0$, except in the Gaussian case.   If $\{\xi_t(x)\}$ are centered Gaussian variables, then $\pi_0$ is the mean zero Gaussian measure with covariance \eqref{cov=}.  Furthermore, if  $p(0,z)+p(0,z+1)=1$ for some $z\in\Z$, then $q$ is supported on $\{-1,0,1\}$ and  \eqref{cov=} shows that $V_0(0,x)=0$ for $x\ne 0$. In other words, the variables $\{\Delta_0(x)\}$ are uncorrelated, and thereby in the Gaussian case  they are i.i.d.   
\end{remark} 
  
We turn briefly to 
the invariant distributions of the height process $h_t$. The key point is that in $d\in\{1,2\}$ there are none.    By analogy with \eqref{De7}, define 
 \be\label{chi7}  \chi_t(x)=\sum_{k=0}^\infty \sum_{z\in\Z^d} \xi_{t-k}(z)   p^k(x,z) ,\quad x\in\Z^d, \; t\in\Z.
\ee
Computation of the sum of variances on the right gives 
\begin{align*}
\E[ \chi_t(x)^2]= \sigma_\xi^2\sum_{k=0}^\infty \sum_{z\in\Z^d}    p^k(x,z)^2
= \sigma_\xi^2\sum_{k=0}^\infty   q^k(0,0)  =\sigma_\xi^2 G(0,0)
\end{align*} 
where  the Green function 
\be
G(x,y)=\sum_{k=0}^\infty q^k(x,y), \qquad x,y\in\bZ^d, \label{t37-2-1-1}\ee
converges by transience in $d\ge 3$.  

\begin{theorem} \label{h-thm}   Assume \eqref{w-ass} and \eqref{xi-ass}.  Consider the height process $h_\centerdot$ defined by \eqref{i1}. 

{\rm (a)}  In dimensions $d=1$ and $d=2$ this process has no invariant distributions.  

{\rm (b)}  In dimensions $d\ge 3$ 
   the  series in \eqref{chi7} converges  for almost every $\xi$ and   in $L^2(\bP)$.      For almost every  $\xi$   the variables $\chi_t(x)$ satisfy    equation 
\eqref{i1}.  
 The process $\chi_\centerdot=\{\chi_t(x)\}_{t\in\Z, x\in\Z^d }$    is a stationary version of the Markov chain  \eqref{i1} and is invariant and ergodic under each individual space-time shift mapping $\theta_{x,t}$ for $(x,t)\ne(0,0)$. 
\end{theorem}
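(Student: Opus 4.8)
\emph{Plan for part (b).} The plan is to repeat the argument behind Theorem \ref{eta-thm-1} almost verbatim. The terms $\xi_{t-k}(z)\,p^k(x,z)$ of the series \eqref{chi7} are independent with mean zero, and the computation displayed just before the theorem gives $\sum_{k\ge 0}\sum_z p^k(x,z)^2=\sum_{k\ge 0}q^k(0,0)=G(0,0)<\infty$ in $d\ge 3$, so by Theorem 2.5.3 in \cite{durr} the series converges $\P$-a.s.\ and in $L^2(\P)$. A reindexing $j=k+1$, identical to the passage leading to \eqref{De11b}, shows $\sum_y p(x,y)\chi_t(y)+\xi_{t+1}(x)=\chi_{t+1}(x)$, so the $\chi$-field obeys \eqref{i1}; since $\chi_t(x)$ depends only on $\{\xi_s:s\le t\}$, it is independent of $\xi_{t+1}$, making \eqref{i1} a genuine Markovian update and $\chi_\centerdot$ a stationary version of it. Stationarity and ergodicity under each $\theta_{x,t}$, $(x,t)\ne(0,0)$, follow exactly as in Theorem \ref{eta-thm-1} from the equivariance identity $\chi_t(x)(\xi)=\chi_0(0)(\theta_{x,t}\xi)$ together with the fact that the i.i.d.\ law $\P$ of $\xi$ is ergodic (indeed mixing) under every nonzero space-time shift.

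\emph{Plan for part (a).} I would work with characteristic functions so as to assume no moments on the candidate invariant law. Suppose $d\in\{1,2\}$ and $\nu$ is a probability measure on $\R^{\Z^d}$ invariant for \eqref{i1}, and assume $\sigma_\xi^2>0$ (without which the deterministic dynamics trivially has invariant laws). Run the chain with $h_0\sim\nu$ and $\{\xi_t\}_{t\ge 1}$ independent of $h_0$. Iterating \eqref{i1},
\[ h_t(0)\;=\;\sum_{y\in\Z^d}p^t(0,y)\,h_0(y)\;+\;\sum_{k=0}^{t-1}\sum_{z\in\Z^d}p^k(0,z)\,\xi_{t-k}(z)\;=:\;S_t+B_t, \]
with $S_t$ a function of $h_0$ and $B_t$ a function of $(\xi_1,\dots,\xi_t)$, hence $S_t$ and $B_t$ independent. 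Invariance forces $h_t(0)$ to have the fixed law given by the site-$0$ marginal of $\nu$; let $\phi$ be its characteristic function, so $\phi(\lambda)=E[e^{\iota\lambda S_t}]\,E[e^{\iota\lambda B_t}]$ for every $t$. Since $\sigma_\xi^2\in(0,\infty)$, the characteristic function $f$ of $\xi$ satisfies $1-|f(u)|^2=E[\,1-\cos(u(\xi-\xi'))\,]\ge c_0u^2$ for $|u|\le u_0$ (with $\xi'$ an independent copy, by dominated convergence near $0$), hence $|f(u)|\le e^{-c_0u^2/2}$ there; as $p^k(0,z)\le 1$, for $0<|\lambda|\le u_0$ this yields
\[ \bigl|E[e^{\iota\lambda B_t}]\bigr|\;=\;\prod_{k=0}^{t-1}\prod_{z}\bigl|f(\lambda\,p^k(0,z))\bigr|\;\le\;\exp\!\Bigl(-\tfrac{c_0\lambda^2}{2}\sum_{k=0}^{t-1}\sum_z p^k(0,z)^2\Bigr)\;=\;\exp\!\Bigl(-\tfrac{c_0\lambda^2}{2}\sum_{k=0}^{t-1}q^k(0,0)\Bigr). \]
The kernel $q$ of \eqref{21-d2} is symmetric, mean-zero, finitely supported and strongly aperiodic, so in $d\in\{1,2\}$ the $q$-walk is recurrent and $\sum_{k\ge 0}q^k(0,0)=\infty$; the right side therefore tends to $0$ as $t\to\infty$. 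Since $|\phi(\lambda)|\le|E[e^{\iota\lambda B_t}]|$, this forces $\phi(\lambda)=0$ for all $0<|\lambda|\le u_0$, contradicting the continuity of $\phi$ at $0$ and $\phi(0)=1$. Hence no invariant distribution for the height process exists in $d=1,2$.

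\emph{Expected main obstacle.} The only real difficulty is carrying out part (a) with no moment hypothesis on $\nu$: if $\nu$ had a finite second moment, one would just note $\Var^\nu[h(0)]=\Var(S_t)+\Var(B_t)\ge\sigma_\xi^2\sum_{k<t}q^k(0,0)\to\infty$ and be done. The characteristic-function route removes that assumption, and its delicate point is the uniform Gaussian-type bound $|f(u)|\le e^{-c_0u^2/2}$ near the origin, which rests on $\sigma_\xi^2\in(0,\infty)$ and is the same estimate needed to rule out convergence in the degenerate case where $q$ is supported on $\{-2,0,2\}$. Everything else — the reindexing in (b), the splitting $h_t(0)=S_t+B_t$ into independent parts, and the recurrence of $q$ in low dimensions — is standard.
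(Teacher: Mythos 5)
Your proof is correct and follows essentially the same route as the paper: for (a) the same decomposition of $h_t(0)$ into independent initial and noise parts, a Gaussian-type upper bound on the characteristic function of the noise part, and recurrence of the $q$-walk in $d\in\{1,2\}$ to force the marginal characteristic function to vanish off the origin; for (b) exactly the variance computation and equivariance argument the paper regards as clear. The only (harmless) technical difference is that you get $\abs{f(u)}\le e^{-c_0u^2/2}$ uniformly for $\abs u\le u_0$ by symmetrization, whereas the paper combines the expansion $\varphi(\alpha)=1-\tfrac12\alpha^2\sigma_\xi^2+o(\alpha^2)$ with the uniform bound $p^k(x,y)\le Cs^{-1/2}$ and discards the first $s$ factors of the product; your variant avoids that truncation, and you are right that both arguments implicitly require $\sigma_\xi^2>0$.
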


Part (b) of the theorem is clear.  Part (a) is proved   in Section \ref{sec:inv-pf}. 

The  stationary height and increment processes \eqref{chi7} and \eqref{De7}  are obviously connected  by $\Delta_t(x,y)=\chi_t(y)-\chi_t(x)$.  
In fact  any bi-infinite process $\{\eta_t(x,y)\}_{t\in\Z,\,x,y\in\Z^d}$ that satisfies additivity \eqref{add} and evolution  \eqref{De11} $\forall t\in\Z$ comes from a bi-infinite height process $\{\h_t\}_{t\in\Z}$, uniquely determined   once  a value $h_0(0)$ is chosen:  the forward process $\{\h_t\}_{t\in\Z_+}$ is determined by \eqref{eta-5} and the evolution \eqref{i1}.  We extend the height process backward in time inductively. Assuming that $\{\h_s\}_{s\ge t}$ has been constructed,  extend to time $t-1$ by first defining the value 
\[ 
h_{t-1}(0)=h_t(z)-\sum_y p(z,y) \eta_{t-1}(0,y) -\xi_t(z)  
\]
where  $z\in\Z^d$ is any point,  and then   $ h_{t-1}(x)=h_{t-1}(0)+\eta_{t-1}(0,x)$ for all $x\ne 0$.  
That the definition of $h_{t-1}(0)$ is independent of $z$ is equivalent to  \eqref{De11}.  That $h_t$ now comes  from $h_{t-1}$ by \eqref{i1} is immediate from the definitions.  


 
\section{Height fluctuations in one dimension}\label{sec:fluct}

Restrict to dimension $d=1$. 
Assume that the initial height function $h_0=\{h_0(x)\}_{x\in\Z}$  is normalized by $\h_0(0)=0$  and   that the distribution of the  initial increment configuration $\eta_0=\{\eta_0(x)=\h_0(x)-h_0(x-1)\}_{x\in\bZ}$ is  invariant under  translations of the spatial index $x$.   The mean, variance, and series of covariances of the initial increment variables are  
\be\label{eta-9}   \mu_0=\mE[\eta_0(x)], \quad  \sigma_0^2=\mVar[\eta_0(x)], \quad  \text{and} \quad    \varrho_0=\sum_{x\in\bZ} \mCov[\eta_0(0),\eta_0(x)].  
 \ee
Our  assumptions will  guarantee absolute  convergence of   the series.   Then $\varrho_0\ge 0$  because it is the limit of 
 $n^{-1} \mVar[\eta_0(1)+\dotsm+\eta_0(n)]$.  
 

  Let 
   $b=-\pmu=-\sum_x xp(0,x)$.  
Scale space by $\sqrt n$ and time by $n$  and consider   the scaled and centered  space-time  height process 
\be
 \fluc_n(t,r)=n^{-1/4}\left\{\h_{\lfloor nt\rfloor}\bigl(\lfloor r\sqrt{n}\rfloor +\lfloor ntb\rfloor\bigr)-\mu_0r\sqrt{n}\,\right\}, \quad (t,r)\in\R_+\times\R.\label{2-1}\ee
We prove that this process converges to a Gaussian process.  
 

The limit process has three natural descriptions: as a Gaussian process, as a sum of two  stochastic integrals, and as the solution of the stochastic heat  equation with additive noise.   The proof is  based on the Gaussian process description.  
Denote the centered Gaussian p.d.f and c.d.f with variance $\nu^2$ by
\be \varphi_{\nu^2}(x)=\frac{1}{\sqrt{2\pi\nu^2}}\exp\left(-\frac{x^2}{2\nu^2}\right)\quad\mbox{and}\quad
\Phi_{\nu^2}(x)=\int_{-\infty}^x\varphi_{\nu^2}(y)dy,   \label{gamma1}\ee
and let   
\be \Psi_{\nu^2}(x)=\nu^2\varphi_{\nu^2}(x)-x\left(1-\Phi_{\nu^2}(x)\right), \qquad x\in\R.\label{gamma2}\ee
Define   two positive definite  functions $\Gamma_1$ and $\Gamma_2$ for 
 $(s,q), (t,r)\in\R_+\times\R$ by  
\be \Gamma_1\bigl((s,q),(t,r)\bigr)=\Psi_{\sigma_1^2(t+s)}(r-q)-\Psi_{\sigma_1^2|t-s|}(r-q) \label{gamma3}\ee
and
\be \Gamma_2\bigl((s,q),(t,r)\bigr)=\Psi_{\sigma_1^2s}(-q)+\Psi_{\sigma_1^2t}(r)-\Psi_{\sigma_1^2(t+s)}(r-q).\label{gamma4}\ee 
Let  $\{Z(t,r): (t,r)\in\R_+\times\R\}$ be the mean zero Gaussian process with  covariance 
\be
E\bigl[Z(s,q)Z(t,r)\bigr]=\frac{\sigma_\xi^2}{\sigma_1^2}\Gamma_1\bigl((t,r),(s,q)\bigr)+\varrho_0\Gamma_2\bigl((t,r),(s,q)\bigr).  \label{2-t13b}\ee 
Equivalently,  this process is given by 
 \be\label{Z-int}  Z(t,r)= \frac{\sigma_{\xi}}{\sigma_1}\iint\limits_{[0,t]\times \bR}\varphi_{\sigma_1^2(t-s)}(r-x)dW(s,x)+\sqrt{\varrho_0}\int\limits_\bR\varphi_{\sigma_1^2t}(r-x)B(x)dx,\ee
where $\{W(t,r):t\in\bR_+,r\in\bR\}$ is a two-parameter Brownian motion and $\{B(r): r\in\bR\}$ a  two-sided Brownian motion, and $W$ and $B$ are independent. 
In the fluctuation limit of $\fluc_n$ described  by  \eqref{2-t13b} and \eqref{Z-int},  the first term comes from the fluctuations of   $\xi$ (the dynamics), and the second term comes  from the fluctuations of $\eta_0$ propagated by the dynamics.  

  $Z$ is also  the unique mild  solution \cite{wals-spde} of the   stochastic partial differential equation 
\be \frac{\partial Z}{\partial t}=\tfrac12 {\sigma_1^2} \,\frac{\partial^2Z}{\partial r^2}+\frac{\sigma_{\xi}}{\sigma_1}\,\dot{W} \ \ \text{ on $\bR_+\times \bR$},  \quad  Z(0,r)=\sqrt{\varrho_0}B(r).\ee
  
We consider three different  hypotheses on the initial increments $\{\eta_0(x)\}$: (a) i.i.d., (b) strongly mixing, and (c) the invariant distribution $\pi_0$ of Theorem \ref{eta-thm-1},  defined by 
\be \eta_0(x)=\sum_{y\in\bZ}\sum_{k=0}^{\infty}\xi_{-k}(y)[p^k(x,y)-p^k(x-1,y)],\quad x\in\bZ.\label{(c)}\ee
Throughout the text  these are referred to as cases (a), (b) and (c).  In case (c) parameter $\varrho_0$ disappears from the limit because    $\varrho_0={\sigma_\xi^2}/{\sigma_1^2}$ \eqref{cov-sum}.  

 Given two sub-$\sigma$-algebras $\cA$ and $\cB$ on a probability space $\OFP$,  let 
\be \alpha(\mathcal{A},\mathcal{B})=\sup_{A\in\mathcal{A}, B\in\mathcal{B}}\abs{P(A\cap B)-P(A)P(B)}.\ee
For the initial increment sequence $\{\eta(x)\}$ define $\sigma$-algebras  $\mathcal{F}^\eta_{m,n}=\sigma\{\eta(x):  m\le x\le n\}$, and then the {\it strong mixing coefficients}  
\be \label{eta-a2} \alpha(n)=\sup_k \alpha (\mathcal{F}^\eta_{-\infty, k}\,, \, \mathcal{F}^\eta_{k+n, \infty} ), \qquad n\in\Z_+.\ee
The sequence $\{\eta(x)\}$ is {\it strongly mixing}  if   $\alpha(n)\to 0$ as $n\to \infty$.  
See Bradley \cite{brad-05} for properties of these and other mixing coefficients. 

Here is the  convergence of finite-dimensional distributions.

\begin{theorem}\label{thm2-4}  Assume  $d=1$, \eqref{w-ass},  \eqref{xi-ass},  and 
  $\E[\xi_t(x)^4]<\infty$.   Assume that the initial increment sequence $\{\eta_0(x)\}_{x\in\Z}$ satisfies one of the assumptions {\rm(a), (b)},  or {\rm(c)}:  
  
 \medskip
  
{\rm(a)}    $\{\eta_0(x)\}_{x\in\Z}$ is an  i.i.d.\ sequence and $\mE[\eta_0(x)^2]<\infty$. 
  
 \medskip 

{\rm(b)}    $\{\eta_0(x)\}_{x\in\Z}$ is a strongly mixing stationary sequence  and  $\exists\delta>0$ such that $\mE[\,\abs{\eta_0(0)}^{2+\delta}\,]<\infty$ and  the strong mixing coefficients in \eqref{eta-a2}  satisfy 
\be\label{eta-a7} \sum_{j=0}^\infty (j+1)^{2/\delta}\alpha(j)<\infty. \ee 
   

 {\rm(c)}   $\{\eta_0(x)\}_{x\in\Z}$ has the distribution $\pi_0$ of the sequence 
 in  \eqref{(c)}.  
   
 \medskip  

Then, $\varrho_0=\sum_{x\in\bZ} \mCov[\eta_0(0),\eta_0(x)]\ge 0$ is absolutely convergent. 
As $n\to\infty$ the  finite-dimensional distributions of the process  $\fluc_n$  of \eqref{2-1}  converge weakly   to those of  the mean-zero Gaussian process $Z$ with covariance \eqref{2-t13b}. 
\end{theorem}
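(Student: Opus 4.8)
The plan is to split the scaled height into a contribution from the initial increments and a contribution from the driving noise, prove a Gaussian limit for each, and recombine via Cram\'er--Wold. First I would iterate \eqref{i1} backward from time $t_n:=\fl{nt}$, obtaining, with $x_n:=\fl{r\sqn}+\fl{ntb}$,
\[
\h_{t_n}(x_n)=\sum_{y\in\Z}p^{t_n}(x_n,y)\,\h_0(y)+\sum_{k=1}^{t_n}\sum_{z\in\Z}p^{t_n-k}(x_n,z)\,\xi_k(z).
\]
Writing $\h_0(y)=\mu_0 y+\widetilde{\h}_0(y)$ with $\widetilde{\h}_0(0)=0$ and $\widetilde{\h}_0(y)-\widetilde{\h}_0(y-1)=\eta_0(y)-\mu_0$, the drift term is $\mu_0\sum_y p^{t_n}(x_n,y)y=\mu_0(x_n+t_n\pmu)=\mu_0 r\sqn+O(1)$, since $x_n+t_n\pmu=\fl{r\sqn}+(\fl{ntb}-\fl{nt}\,b)=r\sqn+O(1)$. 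Hence, with $\fluc_n$ from \eqref{2-1}, $\fluc_n(t,r)=A_n(t,r)+B_n(t,r)+O(n^{-1/4})$ where $A_n(t,r)=n^{-1/4}\sum_{y}p^{t_n}(x_n,y)\widetilde{\h}_0(y)$ is a bounded linear functional of $\{\eta_0(x)\}_{x\in\Z}$ and $B_n(t,r)=n^{-1/4}\sum_{k=1}^{t_n}\sum_z p^{t_n-k}(x_n,z)\xi_k(z)$ one of $\{\xi_k\}_{k\ge1}$; in each of the cases (a), (b), (c) these two families are independent, so $A_n\perp B_n$. By Cram\'er--Wold it suffices, for every finite collection of points $(t_l,r_l)$ and reals $\lambda_l$, to prove a one-dimensional CLT for $\sum_l\lambda_l\fluc_n(t_l,r_l)=\widetilde A_n+\widetilde B_n+o(1)$ (with $\widetilde A_n\perp\widetilde B_n$ of the same two types) and to match the limiting variance to $\Var\bigl[\sum_l\lambda_l Z(t_l,r_l)\bigr]$.

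The noise part is the easy one. For fixed $n$, $\widetilde B_n$ is a sum over $k=1,\dots,\max_l t_{l,n}$ of independent mean-zero summands, each a linear form in $\{\xi_k(z)\}_z$ bounded in absolute value by $Cn^{-1/4}$. Since $\E[\xi_t(x)^4]<\infty$ and, by the local CLT for the span-$1$ symmetric walk $q$ of \eqref{21-d2}, $\sum_{j=0}^{t_n}q^j(0,0)=O(n^{1/2})$, the fourth absolute moments of the (unscaled) summands sum to $O(n^{1/2})$, so after the $n^{-1/4}$ scaling the Lyapunov ratio is $O(n^{-1/2})\to0$ and $\widetilde B_n$ is asymptotically Gaussian. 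Its limiting covariance comes from $\E[B_n(s,q)B_n(t,r)]=n^{-1/2}\sigma_\xi^2\sum_{j}\sum_z p^{j}(q_n,z)p^{j+(t_n-s_n)}(r_n,z)$ (for $s\le t$), whose inner sum is $P\bigl(S_j-S'_{j+(t_n-s_n)}=r_n-q_n\bigr)$ for independent $p$-walks and is evaluated by the local CLT as $\varphi_{\sigma_1^2(2j+t_n-s_n)}\bigl((r-q)\sqn+O(1)\bigr)$; a Riemann-sum passage using the heat identity $\tfrac{d}{da}\Psi_a(x)=\tfrac12\varphi_a(x)$ gives $\E[B_n(s,q)B_n(t,r)]\to\tfrac{\sigma_\xi^2}{\sigma_1^2}\Gamma_1\bigl((s,q),(t,r)\bigr)$.

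The initial-data part carries the real work. Summation by parts and $\widetilde{\h}_0(0)=0$ turn $A_n(t,r)$ into $n^{-1/4}\sum_{i\in\Z}c_{n,i}(\eta_0(i)-\mu_0)$ with $c_{n,i}=P_{x_n}[W_{t_n}\ge i]$ for $i\ge1$ and $c_{n,i}=-P_{x_n}[W_{t_n}\le i-1]$ for $i\le0$, $W_{t_n}$ being the $t_n$-step walk from $x_n$, with mean $x_n+t_n\pmu=r\sqn+O(1)$ and variance $\sim nt\,\sigma_1^2$. The local CLT gives $c_{n,i}=g_{t,r}(i/\sqn)+o(1)$ uniformly in $i$, where $g_{t,r}(v)=\ind_{v>0}\bigl(1-\Phi_{\sigma_1^2t}(v-r)\bigr)-\ind_{v<0}\Phi_{\sigma_1^2t}(v-r)$, and $n^{-1/2}\sum_i|c_{n,i}-g_{t,r}(i/\sqn)|^2\to0$. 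It remains to run a CLT for the weighted sum $n^{-1/4}\sum_i c_{n,i}(\eta_0(i)-\mu_0)$ of the stationary sequence $\{\eta_0(i)-\mu_0\}$, whose weights are bounded and vary by only $O(n^{-1/2})$ per step: in case (a) this is the classical Lindeberg--Feller CLT for weighted sums of i.i.d.\ variables; in case (b) one partitions $\Z$ into consecutive blocks of length $\ell_n$ with $1\ll\ell_n\ll\sqn$ on which $c_{n,i}$ is essentially constant, applies the CLT for partial sums of strongly mixing sequences (Ibragimov; the normalized block variances converge to $\varrho_0$) and uses \eqref{eta-a7} and strong mixing to decouple the blocks; in case (c), by \eqref{(c)} the sequence $\{\eta_0(i)\}$ is an $L^2$ linear functional of the i.i.d.\ noise array, so the Peligrad--Utev CLT for linear processes \cite{peli-utev-97} applies. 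In all cases $\Var(A_n(t,r))\to\varrho_0\int_{\bR}g_{t,r}^2$, and more generally $\Cov\bigl(A_n(s,q),A_n(t,r)\bigr)=n^{-1/2}\sum_{i,j}c_{n,i}^{(s,q)}c_{n,j}^{(t,r)}\mCov[\eta_0(0),\eta_0(j-i)]\to\varrho_0\int_{\bR}g_{s,q}g_{t,r}\,dv$ by dominated convergence in the lag variable (using $n^{-1/2}\sum_i c_{n,i}^2=O(1)$), and $\int_{\bR}g_{s,q}g_{t,r}\,dv=\Psi_{\sigma_1^2s}(-q)+\Psi_{\sigma_1^2t}(r)-\Psi_{\sigma_1^2(s+t)}(r-q)=\Gamma_2\bigl((s,q),(t,r)\bigr)$, which follows from $\Psi_{\nu^2}(x)=\E[(\mathcal N(0,\nu^2)-x)^+]$. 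Absolute convergence of $\varrho_0$ (hence $\varrho_0\ge0$, as the limit of $n^{-1}\mVar[\eta_0(1)+\dots+\eta_0(n)]$) is trivial in (a), follows from the exponential decay of $V_0(0,\cdot)$ in Theorem~\ref{thm-cov} in (c), and from a covariance inequality together with \eqref{eta-a7} in (b).

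Combining the two pieces through $A_n\perp B_n$, every linear combination $\sum_l\lambda_l\fluc_n(t_l,r_l)$ converges to a centered Gaussian whose variance equals $\tfrac{\sigma_\xi^2}{\sigma_1^2}$ times the $\Gamma_1$ quadratic form plus $\varrho_0$ times the $\Gamma_2$ quadratic form in $(\lambda_l)$, i.e.\ exactly $\Var\bigl[\sum_l\lambda_l Z(t_l,r_l)\bigr]$ for the covariance \eqref{2-t13b}; Cram\'er--Wold then yields the finite-dimensional convergence. I expect the main obstacle to be the initial-data CLT in cases (b) and (c): one needs a central limit theorem for a weighted sum of a weakly dependent (respectively linear-process) sequence whose deterministic envelope $c_{n,i}$ varies on the scale $\sqn$, which forces both careful uniform local-CLT estimates for $p^{t_n}$ (so that the approximation errors stay negligible after the $n^{-1/4}$ scaling and the envelope is genuinely flat on blocks) and the blocking / linear-process machinery to control the dependence — this is precisely where \eqref{eta-a7} and the Peligrad--Utev theorem enter. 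The remaining ingredients — the Lyapunov check for the noise term and the $\Psi$-function identities matching $\int g_{s,q}g_{t,r}$ to $\Gamma_2$ and the Riemann sum to $\Gamma_1$ — are routine.
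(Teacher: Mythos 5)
Your decomposition, your treatment of the noise term, and both covariance computations coincide with the paper's proof (which writes $\fluc_n=\mu_0\overline{H}_n+\overline{F}_n+\overline{S}_n$ and handles the noise part $\overline{F}_n$ by Lindeberg--Feller plus the local CLT exactly as you propose). The gaps are in the initial-data CLT, which is the crux. For case (b), your block argument rests on Ibragimov's CLT, whose hypothesis $\sum_n\alpha(n)^{\delta/(2+\delta)}<\infty$ is \emph{not} implied by \eqref{eta-a7} when $\delta\le 2$: for instance $\alpha(j)\asymp j^{-1-2/\delta}(\log j)^{-2}$ satisfies \eqref{eta-a7}, yet $\alpha(j)^{\delta/(2+\delta)}\asymp j^{-1}(\log j)^{-2\delta/(2+\delta)}$ fails to be summable for $\delta\le 2$. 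The paper avoids blocking altogether by applying Peligrad--Utev \cite[Theorem 2.2(c)]{peli-utev-97} directly to the weighted sum $\sum_i b_{n,i}z(i)$ with $z(i)=\eta_0(i)-\mu_0$ and $b_{n,i}=a_{n,i}/\bar\sigma_n$; that theorem is stated under precisely condition \eqref{eta-a7} together with $\sup_n\sum_ib_{n,i}^2<\infty$ and $\max_i|b_{n,i}|\to 0$, both of which your envelope delivers. If you keep the blocking route you must replace Ibragimov by a block CLT valid under the integral condition $\int_0^1\alpha^{-1}(u)Q^2(u)\,du<\infty$ (which \eqref{eta-a7} plus $(2+\delta)$ moments does yield), and you must still carry out the decoupling and gap estimates --- in effect reproving the Peligrad--Utev theorem.

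For case (c), the appeal to ``Peligrad--Utev for linear processes'' does not go through as stated: the mixing-based version would require $\pi_0$ to be strongly mixing with rate \eqref{eta-a7}, which the paper explicitly notes is open outside the Gaussian case, while the i.i.d.-innovation versions concern one-parameter linear processes, whereas \eqref{(c)} expresses $\eta_0$ as a linear functional of the two-parameter array $\{\xi_{-k}(y)\}$ with an \emph{infinite} sum over $k$. The paper's route is elementary but needs one genuine estimate you omit: split $\sum_i a_{n,i}\eta_0(i)=T_{n,1}+T_{n,2}$ according to $k<\ell(n)$ or $k\ge\ell(n)$ with $n/\sqrt{\ell(n)}\to 0$, prove $\E[T_{n,2}^2]\le Cn/\sqrt{\ell(n)}\to 0$ via the Fourier bound $\bigl|\sum_{k\ge\ell}(2q^{k}_{j,0}-q^{k}_{j+1,0}-q^{k}_{j-1,0})\bigr|\le Cq^{\ell}(0,0)\le C\ell^{-1/2}$, and only then apply Lindeberg--Feller to the finite sum of independent terms $T_{n,1}$, using \eqref{cov-sum} to identify the limiting variance as $(\sigma_\xi^2/\sigma_1^2)$ times the $\Gamma_2$ quadratic form. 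Without this truncation the reduction to independent summands is not a finite triangular array and no off-the-shelf CLT applies directly.
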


The convergence in the theorem  means that   for any fixed  $N\in\N$ and     $(t_1,r_1), (t_2,r_2),\ldots,$ $(t_N,r_N)\in \bR_+\times \bR$,   
the weak convergence of $\R^N$-valued vectors holds:  as $n\to\infty$, 
\be
\left(\fluc_n(t_1,r_1), \fluc_n(t_2,r_2),\ldots,\fluc_n(t_N,r_N)\right)\Rightarrow \left(Z(t_1,r_1), Z(t_2,r_2),\ldots,Z(t_N,r_N)\right).\label{2-t13a}\ee

\begin{remark} Case (a) is stated only for completeness, we do not prove it.    A natural question is whether case (c) is actually  covered by case (b).  The answer is    affirmative   in the Gaussian case  where,  $\forall n\in\N$,   $\alpha(k)=O(\abs{k}^{-n})$ as $\abs{k}\to\infty$.  See \cite{zhai-phd} for details.  
\end{remark}

\begin{remark}[Fractional Brownian motion]  In case (c) of the theorem, or in general whenever    $\varrho_0={\sigma_\xi^2}/{\sigma_1^2}$, the limit of the  time-indexed process $\fluc_n(t,0)$ is the Gaussian process with covariance 
\be E [Z(s,0)Z(t, 0)]=\frac{\sigma_\xi^2}{\sqrt{2\pi \sigma_1^2}}({s}^{1/2}+ {t}^{1/2}-{|t-s|}^{1/2}\,)\ee
 which is a fractional Brownian motion with Hurst parameter $1/4$.
\end{remark}
  
By strengthening the assumptions we   upgrade the weak convergence 
$\fluc_n\Rightarrow Z$ to process level on a compact time-space rectangle  $Q=[0,T]\times [-R,R]$.    $Z$ is a continuous process on $Q$.  The paths of  $\fluc_n$ lie in the   space $D_2$ of 2-parameter cadlag paths, defined precisely as follows.   Given  $(t_0,r_0)\in Q$, define   four quadrants  by 
\begin{align*}
&Q_{(t_0,r_0)}^1=\{(t,r)\in Q: t\ge t_0, r\ge r_0\}, \quad Q_{(t_0,r_0)}^2=\{(t,r)\in Q: t\ge t_0, r< r_0\},\\
&Q_{(t_0,r_0)}^3=\{(t,r)\in Q: t< t_0, r< r_0\}, \quad  Q_{(t_0,r_0)}^4=\{(t,r)\in Q: t< t_0, r\ge r_0\}.
\end{align*}
Then the path space is defined by 
\begin{align*}
D_2&=\Bigl\{f:Q\rightarrow \bR: \forall (t_0,r_0)\in Q,\lim_{\begin{subarray}{l} (t,r)\in Q_{(t_0,r_0)}^i \\ (t,r)\rightarrow (t_0,r_0)\end{subarray}}f(t,r)\mbox{ exists for }i\in\{1,2,3,4\}\\
&\qquad \qquad 
\mbox{and }\; \lim_{\begin{subarray}{l} (t,r)\in Q_{(t_0,r_0)}^1 \\ (t,r)\rightarrow (t_0,r_0)\end{subarray}}f(t,r)=f(t_0,r_0)\Bigr\}.
\end{align*}
 $D_2$ is separable and topologically complete under  a Skorohod-type metric 
$$d(f,g)=\inf_{\lambda}\max(\norm{f-g\circ\lambda}_\infty\,,\norm{\lambda-\text{id}}_\infty ), \quad f,g\in D_2, $$
where $\lambda(t,r)=(\lambda_1(t), \lambda_2(r))$  for    strictly increasing, continuous bijections $\lambda_1$ and $\lambda_2$.   For details we refer to Bickel and Wichura  \cite{bick-wich}.  

%


\begin{theorem}\label{thm2-6}  Assume  $d=1$, \eqref{w-ass},  \eqref{xi-ass},  and 
  $\E[\xi_t(x)^{12}]<\infty$.  
 Assume that the initial increment sequence $\{\eta_0(x)\}_{x\in\Z}$ satisfies one of the assumptions {\rm(}a{\rm)}, {\rm(}b{\rm)},  or {\rm(}c{\rm)}:  
  
 \smallskip  
  
{\rm(a)}    $\{\eta_0(x)\}_{x\in\Z}$ is an  i.i.d.\ sequence and $\mE[\,\eta_0(x)^{12}\,]<\infty$. 
  
 \smallskip

{\rm(b)}    $\{\eta_0(x)\}_{x\in\Z}$ is a strongly mixing stationary sequence,  and  $\exists\delta>0$ such that $\bE[\,\abs{\eta_0(0)}^{12+\delta}\,]<\infty$ and  the strong mixing coefficients in \eqref{eta-a2}  satisfy 
\be\label{eta-a77} \sum_{j=0}^\infty (j+1)^{10+132/\delta}\alpha(j)<\infty. \ee 
   

 {\rm(c)}  $\{\eta_0(x)\}_{x\in\Z}$ has the distribution $\pi_0$ of the sequence 
 in  \eqref{(c)}.  
   
 \smallskip

Then on any rectangle $Q=[0,T]\times [-R,R]$,   $\fluc_n\Rightarrow Z$ on path space  $D_2$.   
\end{theorem}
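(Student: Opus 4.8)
The approach is the standard decomposition of weak convergence in $D_2$ into finite-dimensional convergence plus tightness. Finite-dimensional convergence $\fluc_n\Rightarrow Z$ is Theorem~\ref{thm2-4}, whose hypotheses are implied by those assumed here (the twelfth-moment conditions imply the second- and $(2+\delta)$-moment conditions, and \eqref{eta-a77} implies \eqref{eta-a7}). Kolmogorov's continuity criterion applied to the covariance \eqref{2-t13b} shows that $Z$ has a version with continuous paths on $Q$; since coordinate projections are continuous at continuous paths, it remains to prove that $\{\fluc_n\}_{n\ge1}$ is tight in $D_2$. We use the Bickel--Wichura criterion \cite{bick-wich}. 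Writing $\fluc_n(B)=\fluc_n(t,r)-\fluc_n(t,q)-\fluc_n(s,r)+\fluc_n(s,q)$ for the rectangular increment over a block $B=(s,t]\times(q,r]\subseteq Q$, it suffices to produce a finite measure $\mu$ on $Q$ with non-atomic marginals and an exponent $\alpha>1$ such that, uniformly in $n$ and for every block $B\subseteq Q$,
\be\label{BWgoal}
\mE\bigl[\,\abs{\fluc_n(B)}^{12}\,\bigr]\ \le\ C\,\mu(B)^{\alpha},
\ee
together with the condition, also required in \cite{bick-wich}, controlling the behaviour of $\fluc_n$ on the faces of $Q$, which follows from the one-parameter forms of the estimates below (the bottom face $r\mapsto\fluc_n(0,r)$ by the functional CLT for i.i.d.\ / strongly mixing sequences, the other three by Billingsley's one-parameter criterion). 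By the Cauchy--Schwarz inequality, \eqref{BWgoal} also yields the product-over-neighbouring-blocks form used in \cite{bick-wich}. The use of the twelfth moment, rather than the second moment that sufficed for Theorem~\ref{thm2-4}, is what forces the strengthened moment hypotheses.

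To prove \eqref{BWgoal} we use the discrete-time backward-random-walk (dual) representation. Iterating \eqref{i1},
\[
\h_m(x)=\sum_{y}p^m(x,y)\,\h_0(y)\ +\ \sum_{k=0}^{m-1}\sum_y p^k(x,y)\,\xi_{m-k}(y),
\]
so, after subtracting $\mu_0 r\sqrt n$, using $\sum_y p^m(x,y)\,y=x+m\pmu$, and writing $\h_0(y)-\mu_0 y$ as a signed partial sum of the centred increments $\bar\eta_0(j)=\eta_0(j)-\mu_0$, one obtains, uniformly on $Q$,
\[
\fluc_n(t,r)\ =\ n^{-1/4}\sum_{j\in\Z} c^{(n)}_{t,r}(j)\,\bar\eta_0(j)\ +\ n^{-1/4}\sum_{\ell,\,y} b^{(n)}_{t,r}(\ell,y)\,\xi_\ell(y)\ +\ O(n^{-1/4}),
\]
where $b^{(n)}_{t,r}(\ell,y)=p^{\,\lfloor nt\rfloor-\ell}\bigl(x_n(t,r),y\bigr)\ind_{1\le\ell\le\lfloor nt\rfloor}$ with $x_n(t,r)=\lfloor r\sqrt n\rfloor+\lfloor ntb\rfloor$, the $c^{(n)}_{t,r}(j)$ are signed tail probabilities of the time-$\lfloor nt\rfloor$ walk started from $x_n(t,r)$ (so $\abs{c^{(n)}_{t,r}(j)}\le1$ and consecutive differences of $c^{(n)}_{t,r}(\cdot)$ equal $\pm\,p^{\lfloor nt\rfloor}(x_n(t,r),\cdot)$), and the $O(n^{-1/4})$ term is deterministic, arising from the floor functions. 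Taking the alternating sum over the four corners of $B$ exhibits $\fluc_n(B)$ as a linear functional of the i.i.d.\ family $\{\xi_\ell(y)\}$ plus a linear functional of $\{\bar\eta_0(j)\}$, with \emph{doubly differenced} coefficients $b^{(n)}_B$ and $c^{(n)}_B$ (differenced in the spatial endpoint $q$ versus $r$ and in the temporal/starting-point pair $s$ versus $t$), plus a deterministic error of order $n^{-1/4}$.

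The analytic heart is the estimate of the $\ell^2$-norms of these doubly differenced kernels: using the local central limit theorem for $p^m(0,\cdot)$ together with Gamkrelidze's gradient bound \eqref{gram3} and its consequences (for iterated spatial differences and for differences in the time index), one shows
\[
\sum_j\bigl(c^{(n)}_B(j)\bigr)^2\ \le\ C\sqrt n\,\rho(B),\qquad \sum_{\ell,\,y}\bigl(b^{(n)}_B(\ell,y)\bigr)^2\ \le\ C\sqrt n\,\rho(B),
\]
where $\rho(B)$ is comparable, for blocks whose sides are not of lattice order, to $\min\bigl(\abs{r-q},\abs{t-s}^{1/2}\bigr)\le\abs{r-q}^{1/2}\abs{t-s}^{1/4}$ — matching the variance $\mathrm{Var}[Z(B)]\asymp\min(\abs{r-q},\abs{t-s}^{1/2})$ read off from \eqref{2-t13b} — plus lattice-scale corrections from blocks one of whose sides has order $n^{-1}$ in time or $n^{-1/2}$ in space. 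Since $\abs{b^{(n)}_B},\abs{c^{(n)}_B}\le2$, the twelfth powers satisfy the same bounds. Feeding these into Rosenthal's inequality for sums of independent mean-zero variables with finite twelfth moment controls the $\xi$-part for every initial law, and in case (a) also the $\bar\eta_0$-part; in cases (b) and (c) the $\bar\eta_0$-part is estimated by a Rosenthal-type moment inequality for strongly mixing stationary sequences (cf.\ \cite{brad-05,peli-utev-97}), whose hypotheses — a finite $(12+\delta)$-th moment and mixing coefficients summable against $(j+1)^{10+132/\delta}$ — are exactly \eqref{eta-a77}; in case (c) one may instead invoke the exponential decay $\abs{V_0(0,x)}\le A e^{-c\abs x}$ of Theorem~\ref{thm-cov}, which makes \eqref{eta-a77} automatic. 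Combining, on $Q$,
\[
\mE\bigl[\,\abs{\fluc_n(B)}^{12}\,\bigr]\ \le\ C\bigl(\abs{r-q}^{1/2}\abs{t-s}^{1/4}\bigr)^{6}+(\text{negligible lattice corrections})\ \le\ C\bigl(\abs{t-s}\,\abs{r-q}\bigr)^{3/2}+\cdots,
\]
so \eqref{BWgoal} holds with $\alpha=3/2$ and $\mu$ comparable to planar Lebesgue measure on $Q$ (suitably adjusted at the lattice scale). The exponent $\tfrac14$ on $\abs{t-s}$ — the signature of the $H=\tfrac14$ temporal roughness — is precisely why an exponent exceeding $8$ is needed: at exponent $8$ the bound would read $\le C\,\abs{t-s}\,\abs{r-q}^{2}$, giving only $\alpha=1$. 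The one-parameter estimate $\mE\bigl[\abs{\fluc_n(t,r_0)-\fluc_n(s,r_0)}^{12}\bigr]\le C\abs{t-s}^{3/2}$, obtained the same way, handles the time-indexed face processes.

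The step I expect to be the main obstacle is the coefficient estimate just quoted. One must control the $\ell^2$-mass of random-walk kernels differenced simultaneously in space and in time, uniformly in $n$ and over every block $B\subseteq Q$ — including blocks touching the boundary $\{t=0\}$ and the spatial boundary, blocks one of whose sides has lattice order, and the ``degenerate'' blocks on which $\fluc_n(B)$ reduces to a bounded number of elementary increments — and one must extract a bound with a product structure in $\abs{t-s}$ and $\abs{r-q}$ good enough to feed a single Bickel--Wichura-admissible measure. The attendant technical nuisance, which also contributes to the choice of exponent $12$, is that the $O(n^{-1/4})$ rounding error in $\fluc_n(B)$ does not shrink with $\mu(B)$, so the reference measure must be adjusted at the lattice scale so that \eqref{BWgoal} survives for all $n$ simultaneously; the bookkeeping runs parallel to that for the random average process and independent walks in \cite{bala-rass-sepp,sepp-rw,ferr-font-rap}.
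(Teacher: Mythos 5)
Your route is genuinely different from the paper's. You invoke the Bickel--Wichura block-increment criterion, bounding $\mE|\fluc_n(B)|^{12}\le C\mu(B)^{3/2}$ for rectangular increments $B$, whereas the paper uses the criterion of Lemma~\ref{lmm3-11} (Proposition 2 of \cite{kuma-08}): a Kolmogorov--Chentsov-type bound $\mE|\fluc_n(t,r)-\fluc_n(s,q)|^{12}\le C|(t,r)-(s,q)|^\kappa$ with $\kappa\in(2,3)$, valid only for $|(t,r)-(s,q)|>n^{-\gamma}$ (Lemma~\ref{lmm3-12}), supplemented by a \emph{separate} direct control of the modulus $w_{n^{-\gamma}}$ at sub-lattice scales (Lemma~\ref{lmm3-14}). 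At macroscopic scales your two key $\ell^2$-estimates on the doubly differenced kernels are essentially equivalent to the paper's singly differenced bounds \eqref{tight-16} and \eqref{tight-10}--\eqref{tight-11} (the $\min(|r-q|,|t-s|^{1/2})$ refinement follows from those by bounding the double difference two ways), and your use of Rosenthal for the $\xi$-part and of a Rosenthal-type mixing inequality for the $\eta_0$-part in case (b) matches the paper's use of \eqref{t:help7} and of Rio's inequality (Lemma~\ref{lmm3-13}). What the paper's criterion buys is precisely an escape from the point you yourself flag as the main nuisance.

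That point is a genuine gap in your argument as written. Every one of the discrete $\ell^2$-bounds carries an additive constant (your ``$+1$'', the paper's ``$+1$'' in \eqref{tight-14}, \eqref{tight-15}, \eqref{tight-10}), so for blocks $B$ with $\mu(B)\lesssim n^{-2}$ that straddle a lattice line the moment $\mE|\fluc_n(B)|^{12}$ is of order $n^{-3}$ and does \emph{not} vanish with $\mu(B)$; likewise the deterministic $O(n^{-1/4})$ rounding term contributes $n^{-3}$ to the twelfth moment of any block, however small. Bickel--Wichura requires a single finite measure $\mu$ with continuous marginals, \emph{not depending on $n$}, so ``adjusting the reference measure at the lattice scale'' is not available within the theorem you cite; an $n$-dependent family $\mu_n$ would require reproving the chaining argument with a uniform modulus for the $\mu_n$'s, which you do not do. The paper closes exactly this gap by Lemma~\ref{lmm3-14}: on each $n^{-\gamma}\times n^{-\gamma}$ square ($1<\gamma<3/2$) the fields $\overline F_n,\overline S_n$ take at most $8$ values, so a union bound over the $O(n^{2\gamma})$ squares with the individual moment bound $Cn^{-3}$ gives $O(n^{2\gamma-3})\to0$. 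You need either this argument or an equivalent sub-lattice estimate to complete your proof. Two smaller corrections: your parenthetical claim that in case (c) the exponential decay $|V_0(0,x)|\le Ae^{-c|x|}$ ``makes \eqref{eta-a77} automatic'' is unjustified --- decay of covariances does not control strong mixing coefficients (the paper only verifies (c)$\subseteq$(b) in the Gaussian case), and the paper instead handles case (c) directly through the linear-process representation \eqref{(c)} and the identity \eqref{cov-sum}; and the Bickel--Wichura face condition on $r\mapsto\fluc_n(0,r)$ is a nontrivial invariance principle for the partial sums of $\{\eta_0(x)\}$ under hypotheses (b)--(c) that deserves more than a one-line appeal.
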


This completes the description of results and we turn to proofs.

\section{Proofs for invariant distributions} \label{sec:inv-pf}


For $x\in\Z^d$, $i\in[d]$, and  $s<t$ in $\Z$ let 
\[  \Delta_{s,t}(x-e_i, x)= \sum_{k=s+1}^t\sum_{y} \xi_k(y) [ p^{t-k}(x,y )-p^{t-k}(x-e_i,y)]. \]
Then $\Delta_{-\infty, t}$ is what we denoted by $\Delta_t$ in \eqref{De7}.  As $t-s\to\infty$, the random process above indexed by $(x,i)$ converges weakly to the configuration $\Delta_0\sim\pi_0$, by the convergence in \eqref{De7}.   
The basic evolution equation \eqref{eta-11}  can be written as  
\be\label{nu-aux4} \begin{aligned}
\eta_t(x-e_i, x) 
&=\sum_y p^t(x,y)\eta_0(y-e_i, y)  \;+\;  \Delta_{0,t}(x-e_i,x), \qquad t\ge 0. 
\end{aligned}\ee 
The proof of Theorem \ref{thm-pi-u} is contained in the next lemma.  

\begin{lemma}   We have these properties of invariant distributions.  \\[-10pt]

 {\rm (a)}  Let  $\nu\in\cI$ and assume that under $\nu$ each variable  $\eta(x-e_i,x)$  has finite mean and variance. Then 
 \be\label{nu-var1} \Var^\nu[\eta(x-e_i,x)]\ge \Var^{\pi_0}[\eta(x-e_i,x)]. \ee
  Equality in \eqref{nu-var1} holds for all  $i\in[d]$ and $x\in\Z^d$ iff $\nu\in\{\pi_u: u\in\cH_d\}$.  \\[-10pt]

 {\rm (b)}   For each  $u\in\cH_d$,  $\pi_u$ is an extreme point of $\cI$. 
 
\end{lemma}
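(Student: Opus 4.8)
The plan is to base everything on the decomposition \eqref{nu-aux4}. Writing $A_t(x,i)=\sum_y p^t(x,y)\eta_0(y-e_i,y)$ and $B_t(x,i)=\Delta_{0,t}(x-e_i,x)$, the first is a finite function of $\eta_0$ and the second a function of the driving noise $\xi_1,\dots,\xi_t$, so $A_t(x,i)$ and $B_t(x,i)$ are independent. For $\nu\in\cI$ with finite second moments, taking $\Var^\nu$ on both sides of $\eta_t(x-e_i,x)=A_t(x,i)+B_t(x,i)$ and using independence gives $\Var^\nu[\eta(x-e_i,x)]=\Var[A_t(x,i)]+\Var[B_t(x,i)]\ge\Var[B_t(x,i)]$. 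The $t$-truncated form of the variance computation \eqref{De7.5} shows $\Var[B_t(x,i)]\uparrow\Var^{\pi_0}[\eta(x-e_i,x)]$ as $t\to\infty$, which is \eqref{nu-var1}. For the trivial direction of the equality claim, $\pi_u$ is the law of $u+\Delta_0$ with $u$ deterministic, so $\Var^{\pi_u}[\eta(x-e_i,x)]=\Var^{\pi_0}[\eta(x-e_i,x)]$ for every $u\in\cH_d$.

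For the converse I would first reduce to the centered case. Taking $\nu$-expectations in \eqref{nu-aux4} and using $\E\Delta_{0,t}=0$ yields $\bar u_i(x):=E^\nu[\eta(x-e_i,x)]=\sum_y p^t(x,y)\bar u_i(y)$; at $t=1$ this is exactly the statement that $\bar u=(\bar u_1,\dots,\bar u_d)\in\cH_d$. Consequently $\zeta_t(x,i):=\eta_t(x-e_i,x)-\bar u_i(x)$ again obeys \eqref{eta-11} (harmonicity of $\bar u$ cancels the extra deterministic term), its law $\nu_0$ lies in $\cI$, it is centered, and it has the same variances as $\nu$, so the equality hypothesis carries over. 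Assuming now $\bar u\equiv 0$: equality at every $(x,i)$ forces $\Var[A_t(x,i)]=\Var^\nu[\eta(x-e_i,x)]-\Var[B_t(x,i)]\to 0$, so $A_t\to 0$ in $L^2$ and hence in probability, coordinatewise. Since the configuration $B_t=\Delta_{0,t}$ converges weakly to $\Delta_0\sim\pi_0$ (noted after \eqref{nu-aux4}), a Slutsky argument applied to each finite block of coordinates gives $\eta_t\Rightarrow\pi_0$ in $(\R^d)^{\Z^d}$. But $\eta_t\sim\nu$ for every $t$ by invariance, so $\nu=\pi_0$; in the general case this reads $\nu=\pi_{\bar u}\in\{\pi_u:u\in\cH_d\}$. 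This proves (a), and therefore all of Theorem \ref{thm-pi-u} apart from extremality.

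For (b), suppose $\pi_u=b\mu+(1-b)\nu$ with $0<b<1$ and $\mu,\nu\in\cI$. The mixture identity for second moments gives, at each $(x,i)$, $\Var^{\pi_u}[\eta]=b\,\Var^\mu[\eta]+(1-b)\,\Var^\nu[\eta]+b(1-b)\bigl(E^\mu[\eta]-E^\nu[\eta]\bigr)^2$, and finiteness of the left-hand side forces $\mu$ and $\nu$ to have finite variances. Part (a) then gives $\Var^\mu[\eta]\ge\Var^{\pi_0}[\eta]=\Var^{\pi_u}[\eta]$ and likewise for $\nu$; feeding this back, all inequalities collapse to equalities, so $\Var^\mu[\eta(x-e_i,x)]=\Var^\nu[\eta(x-e_i,x)]=\Var^{\pi_0}[\eta(x-e_i,x)]$ for every $x,i$, whence by (a) $\mu=\pi_v$ and $\nu=\pi_w$ for some $v,w\in\cH_d$. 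To force $v=w=u$, compare characteristic functions of the finite linear functionals $\pp{\lambda,\eta}=\sum_{x,i}\lambda_{x,i}\eta(x-e_i,x)$: since $\pi_v$ is the law of $v+\Delta_0$, $E^{\pi_v}[e^{\iota\pp{\lambda,\eta}}]=e^{\iota\pp{\lambda,v}}E^{\pi_0}[e^{\iota\pp{\lambda,\Delta_0}}]$, and the last factor is continuous with value $1$ at $\lambda=0$, hence nonzero on a neighbourhood of the origin. On that neighbourhood $e^{\iota\pp{\lambda,u}}=b\,e^{\iota\pp{\lambda,v}}+(1-b)e^{\iota\pp{\lambda,w}}$; since $0<b<1$ and all three terms have modulus $1$, the triangle inequality is sharp only if $e^{\iota\pp{\lambda,v}}=e^{\iota\pp{\lambda,w}}$, which (both sides being linear in $\lambda$ and equal at $\lambda=0$) forces $\pp{\lambda,v-w}=0$ near the origin, hence for all finitely supported $\lambda$, hence $v=w$; then $u=bv+(1-b)w=v=w$ and $\mu=\nu=\pi_u$.

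The calculations are routine; the one step requiring genuine care is the convergence $\eta_t=A_t+B_t\Rightarrow\pi_0$, which must be run at the level of finite-dimensional marginals in the product topology of $(\R^d)^{\Z^d}$ --- $A_t\to0$ in probability in each fixed coordinate block while $B_t$ converges weakly there --- and then closed off against the tautology $\eta_t\sim\nu$. A secondary point worth pausing on is that centering by $\bar u$ keeps us inside $\cI$, which depends precisely on $\bar u$ being $p$-harmonic.
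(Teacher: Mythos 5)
Your proof is correct and follows essentially the same route as the paper: the decomposition \eqref{nu-aux4} with independence of its two terms gives the variance inequality, equality forces the initial-data term to vanish in $L^2$ so that $\eta_t\Rightarrow\pi_{\bar u}$ while $\eta_t\sim\nu$ for all $t$, and extremality comes from the mixture variance identity combined with part (a). The only (harmless) redundancy is in (b): the collapse of the variance identity already forces $E^\mu[\eta(x-e_i,x)]=E^\nu[\eta(x-e_i,x)]$, i.e.\ $v=w$ directly, so your characteristic-function step is not needed.
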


\begin{proof}  (a)  When the process $\eta_t\sim\nu$  in \eqref{nu-aux4} is stationary, 
taking expectations on both sides shows that $u_i(x)=E^\nu[\eta(x-e_i,x)]$ is harmonic.  Then also the process $\bar\eta_t(x-e_i,x)=\eta_t(x-e_i,x) -u_i(x)$ satisfies \eqref{nu-aux4} and is stationary.  By stationarity, by the independence of the terms on the right of \eqref{nu-aux4},   by time-shift-invariance of $\xi$,  by the  $L^2(\P)$-convergence in \eqref{De7}, and finally by the definition of $\pi_0$: 
\be\label{nu-aux13} \begin{aligned}
&\Var^\nu[\eta(x-e_i,x)] =\mE^\nu[\bar\eta_t(x-e_i,x)^2] \\
&=
\mE^\nu\biggl[ \biggl( \sum_y p^t(x,y)\bar\eta_0(y-e_i, y)\biggr)^2\,\biggr] 
  \;+\;  \E[ \Delta_{0,t}(x-e_i,x)^2\,] \\
  &\ge \E[ \Delta_{-t, 0}(x-e_i,x)^2\,]  \; \underset{t\to\infty}\longrightarrow  \;
     \E[ \Delta_{0}(x-e_i,x)^2\,]  =  \Var^{\pi_0}[\eta(x-e_i,x)].  
\end{aligned}\ee
This gives the inequality claimed in part (a).  

Pick   $x_1,\dotsc,x_N\in\Z^d$,  $i_1,\dotsc,i_N\in[d]$, and  $\alpha_1,\dotsc, \alpha_N\in\R$,  and  repeat \eqref{nu-aux4} for    linear combinations:  
\be\label{nu-aux5} \begin{aligned} \sum_{\ell=1}^N \alpha_\ell \,\bar\eta_t(x_\ell-e_{i_\ell}, x_\ell) &=   \sum_{\ell=1}^N \alpha_\ell  \sum_y p^t(x_\ell,y)\bar\eta_0(y-e_{i_\ell}, y)\\
&\qquad\qquad \qquad 
  \;+\;   \sum_{\ell=1}^N \alpha_\ell  \Delta_{0,t}(x_\ell-e_{i_\ell}, x_\ell) 
  = S_t+D_t. 
\end{aligned}\ee 
Equality in \eqref{nu-var1}   $\forall\,i\in[d], x\in\Z^d$ and \eqref{nu-aux13} imply that $E^\nu[ S_t^2]\to 0$.  Consequently, on the right-hand side of \eqref{nu-aux5}, the first sum vanishes and the second sum converges in distribution  to $\sum_\ell\alpha_\ell  \Delta_{0}(x_\ell-e_{i_\ell}, x_\ell)$. This implies that under $\nu$, the configuration  $\bar\eta$ has the  distribution of the configuration $\Delta_0$, which says that  $\nu$ is among the distributions $\{\pi_u: u\in\cH_d\}$.   

\smallskip 

(b) To get a contradiction, suppose $\pi_u=\beta \nu^1+(1-\beta)\nu^0$ for $\beta\in(0,1)$ and  $\nu^1,\nu^0\in\cI$.    Let $\eta^\ell$ denote the configuration under $\nu^\ell$.  
  Let $J$ be a Bernoulli($\beta$) variable: $P(J=1)=\beta=1-P(J=0)$.  Then  $\nu=\beta \nu^1+(1-\beta)\nu^0$ is the distribution of the configuration $\eta^J$, and by the definition of $\pi_u$,  the assumption $\pi_u=\nu$  implies the distributional equality 
 \be\label{nu-aux18}  u_i(x)+\Delta_0(x-e_i,x)\overset{d}= \eta^J(x-e_i,x), \qquad x\in\Z^d,\,  i\in[d]. \ee
It follows that $\eta^1$ and $\eta^0$ have finite means $u^\ell_i(x)= E^{\nu^\ell}[\eta^\ell(x-e_i,x)]$ and variances, and 
 \be\label{nu-aux19}  u_i(x)=E^\nu[\eta^J(x-e_i,x)]=\beta u^1_i(x)
+ (1-\beta) u^0_i(x). 
\ee
From \eqref{nu-aux18}, by expanding the variance, and then by \eqref{nu-var1}, 
\begin{align*}
&\Vvv[ \Delta_0(x-e_i,x) ]  = E^\nu\bigl[ \bigl(\eta^J(x-e_i,x)-u_i(x)\bigr)^2\,\bigr]\\
&=\beta E^{\nu^1}\bigl[ \bigl(\eta^1(x-e_i,x)-u^1_i(x) +(1-\beta)\{ u^1_i(x)-u^0_i(x)\}  \bigr)^2\,\bigr]\\
&\qquad\qquad  + \ 
(1-\beta)  E^{\nu^0}\bigl[ \bigl(\eta^0(x-e_i,x)-u^0_i(x) - \beta\{u^1_i(x)-u^0_i(x)\} \bigr)^2\,\bigr] \\[2pt] 
&= \beta \Var^{\nu^1}[  \eta^1(x-e_i,x)] + (1-\beta) \Var^{\nu^0}[  \eta^0(x-e_i,x)]  
+\beta(1-\beta) \bigl\{ u^1_i(x)-u^0_i(x) \bigr\}^2 \\
&\ge  \Vvv[ \Delta_0(x-e_i,x) ]  +  \beta(1-\beta) \bigl\{ u^1_i(x)-u^0_i(x) \bigr\}^2.  
\end{align*}
This forces first $u^1=u^2$, so by \eqref{nu-aux19} $u=u^1=u^0$.  Second, we get equality of the variances
\[   \Var^{\nu^1}[  \eta^1(x-e_i,x)] =  \Var^{\nu^0}[  \eta^0(x-e_i,x)]  = \Vvv[ \Delta_0(x-e_i,x) ]   \]
which by part (a) forces  $\nu^\ell=\pi_{u^\ell}$ which equals $\pi_u$.  

\end{proof}

We prove Theorem \ref{eta-thm-2} next because it will be helpful for a later proof.  

\begin{proof}[Proof of Theorem \ref{eta-thm-2}]    
By \eqref{nu-aux4}, the assumption on $\eta_0$,  and harmonicity, 
\be \begin{aligned}
\eta_t(x-e_i,x) 
&=  \sum_{y}p^{t}(x,y) u_i(y)\;+\; \sum_{y}p^{t}(x,y)\zeta(y,i)\;+\; \Delta_{0,t}(x-e_i,x)\\
&=   u_i(x)\;+\; \sum_{y}p^{t}(x,y)\zeta(y,i)\;+\;  \Delta_{0,t}(x-e_i,x). 
\end{aligned} \label{eta88}\ee
The terms on the right are independent.  
As $t\to\infty$,    the second term  on the right  converges in $L^1$ to the constant $  E\zeta(0, i)=0$ by Lemma \ref{lm-fourier} from the appendix.   As a process indexed by $(x,i)$, the last term converges in distribution to $\Delta_0$.  Since the configuration $\{u_i(x)+\Delta_0(x-e_i, x)\}_{i\in[d],\,x\in\Z^d}$ has distribution $\pi_u$, we have shown that $\eta_t$ converges weakly to $\pi_u$. 
  \end{proof} 

For a probability measure $\nu\in\cM_1$,  let 
\[   A(\nu, r)=\max_{x\in\Z^d:\, \abs{x}\le r} E^\nu\abs{\eta(x)}. \] 
For configurations $\eta\in(\R^d)^{\Z^d}$, let 
\be\label{sss-def}   \sss_t(\eta, x, i)= \sum_y p^t(x,y)\eta(y-e_i, y) . \ee

  \begin{lemma} \label{lm-nu-goal}    Let $\nu\in \cM_1$   satisfy $r^{-1/2}A(\nu, r)\to 0$ 
as $r\to\infty$. Then for  $x\in\Z^d$ and  $i,j\in[d]$,  \ \ 
\[  \lim_{t\to\infty}   E^\nu\abs{\sss_t(\eta, x,i)-\sss_t(\eta, x-e_j, i)} = 0 . \]  
\end{lemma}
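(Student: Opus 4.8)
The plan is to reduce everything to two ingredients: the Gamkrelidze smoothness estimate \eqref{gram3} for the kernel, and the hypothesis $r^{-1/2}A(\nu,r)\to0$. Writing out the definition \eqref{sss-def} and using translation invariance of the walk, the quantity in question is a single linear combination,
\[
\sss_t(\eta,x,i)-\sss_t(\eta,x-e_j,i)=\sum_{y\in\Z^d}\bigl[p^t(x,y)-p^t(x-e_j,y)\bigr]\,\eta(y-e_i,y).
\]
I would take $E^\nu$ of the absolute value, move the absolute value inside the sum by the triangle inequality, bound the coordinate by the norm, $|\eta(y-e_i,y)|\le|\eta(y)|$, and then use $E^\nu|\eta(y)|\le A(\nu,|y|)$. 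Because $p$ has range $M$ by \eqref{w-ass}, both $p^t(x,y)$ and $p^t(x-e_j,y)$ vanish unless $|y|\le|x|+Mt$, so only finitely many $y$ contribute and, $A(\nu,r)$ being nondecreasing in $r$, we may replace $A(\nu,|y|)$ by $A(\nu,|x|+Mt)$ there. This yields
\[
E^\nu\bigl|\sss_t(\eta,x,i)-\sss_t(\eta,x-e_j,i)\bigr|\ \le\ A(\nu,|x|+Mt)\sum_{y\in\Z^d}\bigl|p^t(x,y)-p^t(x-e_j,y)\bigr|.
\]

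The second step invokes \eqref{gram3}: by translation invariance $\sum_y|p^t(x,y)-p^t(x-e_j,y)|=\sum_z|p^t(0,z)-p^t(0,z+e_j)|\le Ct^{-1/2}$, so the right-hand side above is at most $Ct^{-1/2}A(\nu,|x|+Mt)$. Finally, for $t\ge|x|$ one has $|x|+Mt\le(M+1)t$, hence $A(\nu,|x|+Mt)/\sqrt t\le\sqrt{M+1}\cdot A(\nu,|x|+Mt)/\sqrt{|x|+Mt}\to0$ as $t\to\infty$ by the hypothesis; thus $A(\nu,|x|+Mt)=o(\sqrt t)$ and the bound $Ct^{-1/2}A(\nu,|x|+Mt)\to0$. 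This is the claim.

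There is no real obstacle: the whole content is the pairing of the exponent $1/2$ in the moment hypothesis with the decay exponent $1/2$ of the discrete gradient of $p^t$ supplied by \eqref{gram3}. The only subtlety worth noting is that the $p$-walk carries a drift $\pmu$, so at time $t$ the mass of $p^t(x,\cdot)$ lives at spatial scale $t$, not $\sqrt t$; this forces one to evaluate $A(\nu,\cdot)$ at an argument of order $t$ rather than $\sqrt t$. Since $A(\nu,r)$ grows strictly slower than $r^{1/2}$, this crude localization to $|y|\le|x|+Mt$ already suffices and no finer splitting of the range of $y$ into near and far parts is needed.
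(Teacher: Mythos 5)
Your proposal is correct and follows essentially the same route as the paper's proof: bound the difference by the triangle inequality, use the finite range $M$ of $p$ to replace $E^\nu|\eta(x+y)|$ by $A(\nu, Mt+|x|+1)$, apply the Gamkrelidze estimate \eqref{gram3} to get the factor $Ct^{-1/2}$, and conclude from the hypothesis $r^{-1/2}A(\nu,r)\to 0$ evaluated at $r=O(t)$. Your closing remark about the drift forcing the spatial scale $t$ rather than $\sqrt t$ is exactly why the paper's hypothesis is calibrated with the exponent $1/2$, so nothing is missing.
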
  

\begin{proof}     By Theorem 4 of Gamkrelidze \cite{Gamk-85} (see also the next to last paragraph of \cite{Gamk-85}),    under assumptions \eqref{w-ass} on the kernel,  there is a constant $C<\infty$ such that 
\be\label{gram5}
\sum_{x\in\Z^d}\sum_{\ell=1}^d \abs{p^t(0,x)-p^t(0, x-e_\ell)} \le C t^{-1/2}  
\quad \forall t\in\Z_+.  
\ee
 By \eqref{w-ass}  kernel $p$ has finite range $M$.  So by  \eqref{gram5},   
\begin{align}
E^\nu\abs{\sss_t(\eta, x, i)-\sss_t(\eta, x-e_j, i)}  &\le 
\sum_{y: \abs y\le Mt+1 }  \abs{p^t(0,y)-p^t(0,y+e_j)} \cdot E^\nu\abs{\eta(x+y)}  \nn\\
&\le  Ct^{-1/2}  A(\nu, Mt +\abs x +1)   \nn
\end{align}
and by the  assumption   the last quantity vanishes as $t\to\infty$.   
\end{proof}

 For $\alpha\in\R^d$ let $\pi_\alpha=\pi_u$ for the constant function $u(x)=\alpha$.

\begin{lemma}\label{lm-nu-8}
Let $\nu\in \cI$   satisfy $r^{-1/2}A(\nu, r)\to 0$ 
as $r\to\infty$.   Then there exists a  probability measure $\gamma$ on $\R^d$ such that $\nu=\int \pi_\alpha\,\gamma(d\alpha)$.  
\end{lemma}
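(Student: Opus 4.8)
The plan is to exploit the unrolled evolution equation \eqref{nu-aux4}, which expresses the stationary configuration as the sum of a smoothed initial part and an independent dynamical part, and then let $t\to\infty$. Concretely, I would run the stationary Markov chain: take $\eta_0\sim\nu$ and generate $\eta_t$ by iterating \eqref{eta-11} with a fresh i.i.d.\ noise sequence $\xi_1,\xi_2,\dots$ independent of $\eta_0$; since $\nu\in\cI$, $\eta_t\sim\nu$ for every $t\ge0$. Then \eqref{nu-aux4} reads
\[ \eta_t(x-e_i,x)=\sss_t(\eta_0,x,i)+\Delta_{0,t}(x-e_i,x), \]
where the two summands are independent, the first being a function of $\eta_0$ and the second a function of $\xi_1,\dots,\xi_t$. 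Two facts are already available: $\Delta_{0,t}\overset{d}=\Delta_{-t,0}$ converges weakly, as an $(\R^d)^{\Z^d}$-valued configuration, to $\Delta_0\sim\pi_0$ (by the $L^2$ convergence of the series \eqref{De7}); and Lemma \ref{lm-nu-goal}, which applies precisely because $r^{-1/2}A(\nu,r)\to0$, gives $E\bigl\lvert\sss_t(\eta_0,x,i)-\sss_t(\eta_0,x-e_j,i)\bigr\rvert\to0$ for all $x\in\Z^d$ and $i,j\in[d]$.

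First I would establish tightness of the laws of $\sss_t(\eta_0,\cdot,\cdot)$ in $(\R^d)^{\Z^d}$. At each coordinate $(x,i)$ the sum $\sss_t(\eta_0,x,i)+\Delta_{0,t}(x-e_i,x)$ has the fixed law of $\eta(x-e_i,x)$ under $\nu$, hence is tight, and $\Delta_{0,t}(x-e_i,x)$ is tight because it converges in distribution; therefore $\sss_t(\eta_0,x,i)$ is tight, since tightness of $X_t+Y_t$ and of $Y_t$ implies tightness of $X_t$ for real random variables. Coordinatewise tightness over the countable index set $\Z^d\times[d]$ gives tightness of the configurations $\sss_t(\eta_0,\cdot,\cdot)$. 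Passing to a subsequence $t_m\to\infty$, the pair $\bigl(\sss_{t_m}(\eta_0,\cdot,\cdot),\,\Delta_{0,t_m}(\cdot)\bigr)$ converges jointly in distribution to a pair $(\beta,\Delta_0)$, and independence passes to the limit, so $\beta$ and $\Delta_0$ are independent. The $L^1$ vanishing from Lemma \ref{lm-nu-goal} forces $\beta(x,i)=\beta(x-e_j,i)$ a.s.\ for every $x,i,j$, and since $\Z^d$ is connected by nearest-neighbor steps this means $\beta(x,i)=\beta(0,i)=:\alpha_i$ for all $x$; that is, $\beta$ is determined by a single random vector $\alpha=(\alpha_1,\dots,\alpha_d)\in\R^d$ that is independent of $\Delta_0$.

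To finish, I would match distributions. Along $t_m$, by continuity of addition on $(\R^d)^{\Z^d}$, $\eta_{t_m}=\sss_{t_m}(\eta_0,\cdot,\cdot)+\Delta_{0,t_m}(\cdot)$ converges in distribution to the configuration $\{\alpha_i+\Delta_0(x-e_i,x)\}_{x\in\Z^d,\,i\in[d]}$. But $\eta_{t_m}\sim\nu$ for every $m$, so $\nu$ is exactly the law of this configuration, with $\alpha$ distributed according to some probability measure $\gamma$ on $\R^d$ and independent of $\Delta_0\sim\pi_0$. Conditioning on $\alpha$ and using that, for a constant $\alpha\in\R^d$, the configuration $\{\alpha_i+\Delta_0(x-e_i,x)\}$ has law $\pi_\alpha$ by the definition \eqref{De-u} of $\pi_u$, I obtain $\nu=\int_{\R^d}\pi_\alpha\,\gamma(d\alpha)$.

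The step needing genuine care is the tightness-and-joint-convergence argument: one must check that $\sss_t(\eta_0,\cdot,\cdot)$ does not leak mass to infinity and that independence together with the Gamkrelidze-type flatness of Lemma \ref{lm-nu-goal} really does pin every subsequential limit down to a profile that is constant in the $x$-variable. Once that is in place the identification of $\nu$ is automatic, because the stationary laws $\eta_t\sim\nu$ do not depend on $t$, so any weak subsequential limit of $\eta_t$ must again equal $\nu$.
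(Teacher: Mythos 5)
Your proof is correct, but it follows a genuinely different route from the paper's. The paper first shows that $\nu$ is spatially shift-invariant, by comparing characteristic functions of finite linear combinations of $\eta(x_k-e_{i_k},x_k)$ with their shifts by $-e_j$ — the drift term is controlled by Lemma \ref{lm-nu-goal} exactly as in your argument, and the noise term by the weak convergence of $\Delta_{0,t}$ to the shift-invariant $\Delta_0$. It then invokes the ergodic decomposition $\nu=\int\mu\,\Gamma(d\mu)$ into spatially ergodic components with finite means $\alpha(\mu)$, and applies the already-proved convergence result (Theorem \ref{eta-thm-2}, which rests on the Fourier/harmonic-function Lemma \ref{lm-fourier}) to each component to get $\mE^\mu[f(\eta_t)]\to E^{\pi_{\alpha(\mu)}}(f)$. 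You instead run a direct compactness argument on the decomposition \eqref{nu-aux4}: coordinatewise tightness of $\sss_t(\eta_0,\cdot,\cdot)$ (correctly deduced from tightness of the sum and of $\Delta_{0,t}$), a jointly convergent subsequence, preservation of independence in the limit, and identification of the limit of the smoothed initial part as a spatially constant random vector via Lemma \ref{lm-nu-goal}. Each of these steps is sound — in particular the deduction $\beta(x,i)=\beta(x-e_j,i)$ a.s.\ from convergence in probability of the difference to $0$ together with joint weak convergence, and the continuity of coordinatewise addition in the product topology. What the paper's route buys is the intermediate fact that $\nu$ is shift-invariant (of independent interest) and a very short endgame once Theorem \ref{eta-thm-2} is in hand; what your route buys is self-containment — you bypass the ergodic decomposition, Theorem \ref{eta-thm-2}, and Lemma \ref{lm-fourier} entirely, at the cost of the tightness/subsequence bookkeeping that you correctly flag as the delicate step.
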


\begin{proof}  

Using \eqref{sss-def} write  the stationary evolution \eqref{nu-aux4}  for  $\eta_t\sim\nu$    as   
\be\label{nu-aux6} \begin{aligned}
\eta_t(x-e_i, x)  
&=\sss_t(\eta_0,x,i) +  \Delta_{0,t}(x-e_i, x)  . 
\end{aligned}\ee 

Pick lattice points $x_k\in\Z^d$, indices $i_k\in[d]$,  and  reals $\alpha_k$,  and consider the distributions of the linear combinations 
\begin{align*}
\sum_{k=1}^N \alpha_k  \eta_t(x_k-e_{i_k}, x_k) &= \sum_{k=1}^N \alpha_k  \sss_t(\eta_0,x_k,i_k) +  \sum_{k=1}^N \alpha_k  \Delta_{0,t}(x_k-e_{i_k}, x_k)  = S_t+ D_t
\end{align*}
and the same thing shifted by $-e_j$: 
\begin{align*}
\sum_{k=1}^N \alpha_k  \eta_t(x_k-e_j-e_{i_k}, x_k-e_j) &= \sum_{k=1}^N \alpha_k  \sss_t(\eta_0, x_k-e_j,i_k)\\
&\quad  +  \sum_{k=1}^N \alpha_k  \Delta_{0,t}(x_k-e_j-e_{i_k}, x_k-e_j)  = \wt S_t+ \wt D_t.  
\end{align*}

Compare   $\nu$ and its shift  by $-e_j$ through characteristic functions.  Use  time invariance,  the triangle inequality,  and  $\abs{e^{\iota a}}\le 1$  and $\abs{e^{\iota a}-e^{\iota b}}\le \abs{a-b}$ for real $a,b$. 
\begin{align*}
&\bigl\lvert \, E^\nu[e^{\iota \sum_{k=1}^N \alpha_k  \eta(x_k-e_{i_k},\, x_k) }]  - E^\nu[e^{\iota \sum_{k=1}^N \alpha_k  \eta(x_k-e_j-e_{i_k},\, x_k-e_j) }] \,\bigr\rvert  \\  
&= 
 \bigl\lvert \, \mE^\nu[e^{\iota \sum_{k=1}^N \alpha_k  \eta_t(x_k-e_{i_k},\, x_k) }]  - \mE^\nu[e^{\iota \sum_{k=1}^N \alpha_k  \eta_t(x_k-e_j-e_{i_k},\, x_k-e_j) }] \,\bigr\rvert  \\
&=  \bigl\lvert \, \mE^\nu[e^{\iota S_t }] \,\E[e^{\iota D_t }]  - \mE^\nu[e^{\iota \wt S_t }] \,\E[e^{\iota \wt D_t }] \,\bigr\rvert  \\
&\le   \bigl\lvert \, \mE^\nu[e^{\iota S_t }]    - \mE^\nu[e^{\iota \wt S_t }]   \,\bigr\rvert  
+     \bigl\lvert \,  \E[e^{\iota D_t }]  -  \E[e^{\iota \wt D_t }] \,\bigr\rvert  \\
&\le \mE^\nu\abs{S_t-\wt S_t} +  
 \bigl\lvert \,  \E[e^{\iota D_t }]  -  \E[e^{\iota \wt D_t }] \,\bigr\rvert .  
\end{align*}
The last line above vanishes as $t\to\infty$  by Lemma \ref{lm-nu-goal} and because process $\Delta_{0,t}$ converges weakly to process $\Delta_0$ which is invariant under spatial shifts.  
 We have now shown $\nu$ invariant under spatial shifts.  
 
Let $\nu=\int \mu\,\Gamma(d\mu)$ be the ergodic decomposition of $\nu$. $\Gamma$ is a probability measure supported on  probability measures $\mu\in\cM_1$  that are  invariant and  ergodic  under the spatial shift group and that have a finite mean $\alpha(\mu)=E^\mu[\eta(x)]$.  
 
 Let $f$ be a bounded continuous function on $(\R^d)^{\Z^d}$.  By invariance of $\nu$ and Theorem \ref{eta-thm-2}, 
 \begin{align*}
 \int f\,d\nu &=   \mE^\nu[f(\eta_t)] =  \int  \mE^\mu[f(\eta_t)]\,\Gamma(d\mu)
  \; \underset{t\to\infty}\longrightarrow  \;     \int  E^{\pi_{\alpha(\mu)}}(f)\,\Gamma(d\mu) 
  = \int  E^{\pi_{\alpha}}(f)\,\gamma(d\alpha)
 \end{align*}
 where $\gamma$ is the distribution of the mean of $\mu$ under $\Gamma(d\mu)$.  
   \end{proof} 


\begin{proof}[Proof of Theorem \ref{thm-uniq}]   By harmonicity of $u$,  the process $\wt\eta_t(x-e_i,x)=\eta_t(x-e_i,x)- u_i(x)$  is also time-stationary.  By assumption \eqref{nu-mom} its marginal distribution $\wt\nu\in\cI$  satisfies the growth bound  $r^{-1/2} A(\wt\nu, r)\to 0$.    Lemma \ref{lm-nu-8} applied to $\wt\nu$ gives the distributional identity 
$  \wt\eta(x) \overset{d}=  X+\Delta_0(x)  $ where $\wt\eta\sim\wt\nu$ and   $X\in\R^d$ is a 
  $\wt\gamma$-distributed  random vector  independent of $\Delta_0$.    Consequently $\bar u(x)=E^\nu[\eta(x)]=u(x) +E^{\wt\gamma}[X] $, and the claim about substituting $\bar u(x)$ into \eqref{nu-mom} follows.   
  
  An application of  Lemma \ref{lm-nu-8}  to the distribution $\bar\nu$ of the configuration 
$\bar\eta_t(x)=\eta_t(x)- \bar u(x)$ gives a random $Y\in\R^d$ such that $\eta\sim\nu$ satisfies 
$ \{\eta(x) \}_{x\in\Z^d} \overset{d}=  \{ \bar u(x)+Y+ \Delta_0(x)  \}_{x\in\Z^d} $.  With $Y\sim\gamma$, this is the same as $\nu=\int \pi_{\bar u+\alpha}\,\gamma(d\alpha)$.  
 \end{proof}

\begin{proof}[Proof of Theorem \ref{thm-cov}]  Identity \eqref{cov=} says that $V_0(0,x)$ is a Fourier coefficient of (a constant multiple of)  the function 
$f(\theta)=\frac{1-\cos \theta}{1-\phi_q(\theta)}$ on  $ [-\pi,\pi]$.  Extend   $f$ from  $ [-\pi,\pi]$ to a meromorphic function $f(z)=\frac{1-\cos z}{1-\phi_q(z)}$ on the complex plane.  The only pole in some  neighborhood of  $ [-\pi,\pi]$ is $z=0$.   Expansions give   $f(z)=\frac{z^2/2 + O(z^4)}{\sigma_1^2z^2 + O(z^4)}$  and we see that   $z=0$ is a removable singularity.    Consequently  $f$ is analytic on $[-\pi,\pi]$ and its   Fourier coefficients decay exponentially    (Prop.~1.2.20 on p.~20 of \cite{pinsky-fourier}).  

Identity \eqref{cov-sum} now follows   from 
\be
\lim_{x\to \pm\infty}\bigl[a(x+k)-a(x)\bigr]=\pm\frac{k}{2\sigma_1^2}  \label{l38a}\ee
(P29.2 on p.~354 of  \cite{spitzer}). Note that the variance of the $q$-kernel is $2\sigma_1^2$  \eqref{p-6}.  
\end{proof}

\begin{proof}[Proof of  part {\rm(a)} of Theorem \ref{h-thm}]
  By square-integrability  \eqref{xi-ass} and Theorem 3.3.8 in \cite{durr},   the characteristic function $\varphi(\alpha)=\E(e^{\iota\alpha\xi_0(0)})$ satisfies 
 \[  \varphi(\alpha)=1-\tfrac12 \alpha^2\sigma_\xi^2 +o(\alpha^2) \qquad 
 \text{as } \  \alpha\to 0. \] 
 
Suppose $\{h_t\}_{t\in\Z_+}$ is a time-stationary  $\R^{\Z^d}$-valued Markov chain that satisfies \eqref{i1}.   Iterating   \eqref{i1} gives  
\be\begin{aligned}
&\h_{t}(x)=\sum_{y\in\Z^d}  p^t(x,y)\h_0(y) \, +\, \  \sum_{k=0}^{t-1}\sum_{y\in\Z^d} p^k(x,y) \xi_{t-k}(y)  .
\end{aligned} \ee
On the right-hand side  the initial profile $\h_0$ is independent of the i.i.d.\ variables $\{\xi_j(y)\}$.  By this independence, by the fact that characteristic functions are bounded in absolute value  by 1,  and by the invariance $\h_0\overset{d}=\h_t$,  we have, for any $0<s<t$,  
\begin{align*}  
&\absb{\,\mE[ e^{\iota\alpha \h_0(x)}]  \,}  \le  \absb{\, \E[e^{ \iota\alpha  \sum_{k=s}^{t-1}\sum_{y\in\Z^d} p^k(x,y) \xi_{t-k}(y)  } ] \,} 
= \prod_{k=s}^{t-1}\prod_{y\in\Z^d}  \absb{ \varphi\bigl(\alpha p^k(x,y)\bigr)} \\
&\qquad 
= \prod_{k=s}^{t-1}\prod_{y\in\Z^d}  \Bigl(   1- \tfrac12 \alpha^2 \sigma_\xi^2 \bigl(p^k(x,y)\bigr)^2  \bigl(1 + o(1)\bigr)  \Bigr) \\
&\qquad
\le \exp\biggl\{   - \tfrac12 \alpha^2 \sigma_\xi^2 \bigl(1 + o(1)\bigr)  \sum_{k=s}^{t-1}\sum_{y\in\Z^d}   \bigl(p^k(x,y)\bigr)^2     \biggr\}
\le     \exp\biggl\{   - c \alpha^2  \sum_{k=s}^{t-1}q^k(0,0)      \biggr\}    
\end{align*}
with a constant  $c>0$.  
The second equality above is justified by fixing $s$ large enough and by   the dimension-independent  uniform bound $p^k(x,y)\le Cs^{-1/2} $ for $k\ge s$
({P7.6}  on p.~72 in \cite{spitzer}).    In dimensions $d\in\{1,2\}$ the $q$-walk is recurrent (T8.1 on p.~83 in \cite{spitzer}), and consequently taking $t\to\infty$  above 
gives  $\mE[ e^{\iota\alpha \h_0(x)}]=0$ for $\alpha\ne 0$.     This contradiction with the continuity of a  characteristic function at $\alpha=0$ shows that in  $d\in\{1,2\}$  there can exist no time-stationary  height process.  
\end{proof}

\section{Height fluctuations:  limits of finite-dimensional distributions}\label{sec:fd}

 Let $\{X^i_t\}_{t\in\Z_+}$ denote a  random walk on $\Z$  with initial point $X_0^i=i$ and  transition probability  $p(x,y)$ of \eqref{w-ass},  and let $\{Y^i_t\}_{t\in\Z_+}$ similarly denote a random walk on $\Z$  that uses transition  $q(x,y)$ of \eqref{21-d2}.  Equivalently,   $Y_t^i=\tilde{X}_t^{i}-X_t^{0}$  for two independent $p$-walks 
$\tilde{X}_\centerdot^{i}$ and $X_\centerdot^{0}$.   Under assumption \eqref{w-ass}, the $q$-walk also has   span 1.  

 Probabilities and expectations of these walks are denoted by $P$ and $E$,  always taken under {\it fixed}   $\eta_0$ and $\xi$.  In particular, we have the following ``dual'' representation of the harness process:    for   $t\in\bZ_+$ and  $i\in\bZ$,
\be    h_t(i)= E\bigl[\h_0(X_t^{i})\bigr]+\sum_{k=1}^t E \bigl[\xi_k(X_{t-k}^{i})\bigr], \label{21-l1}\ee 
where we emphasize that $h_0$ and $\xi$ have {\it not} been averaged over on  the right because the $E$ acts only on the walk $X^i_\centerdot$.

This lemma is a consequence  of  the  local CLT (Theorem 3.5.2 in \cite{durr}) and will be used several times in the sequel.  
\begin{lemma}\label{crllyB-1}
For a mean 0, span 1 random walk $S_n$ on $\bZ$ with finite variance $\sigma^2$, $a\in \bR$,  and points $a_n\in\bZ$ such that  $\lim_{n\to\infty}a_n/\sqrt{n}=a$, we have 
\be
\lim_{n\to\infty}\frac{1}{\sqrt{n}}\sum_{k=0}^{\lfloor nt\rfloor-1} P(S_k=a_n)=\frac{1}{\sigma^2}\int_0^{\sigma^2t}\frac{1}{\sqrt{2\pi v}}\exp\Bigl(-\frac{a^2}{2v}\Bigr)dv.\label{2-l11}\ee
\end{lemma}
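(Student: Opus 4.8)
The plan is to apply the lattice local central limit theorem (Durrett, Theorem~3.5.2) to replace each probability $P(S_k=a_n)$ by the Gaussian density $(2\pi\sigma^2 k)^{-1/2}\exp(-a_n^2/(2k\sigma^2))$, and then recognize the resulting sum as a Riemann sum converging to the claimed integral. First I would record a uniform bound: since the local CLT gives $\sup_{x\in\Z}\bigl|\sqrt{k}\,P(S_k=x)-(2\pi\sigma^2)^{-1/2}e^{-x^2/(2k\sigma^2)}\bigr|\to0$, there is $N$ with $\sqrt{k}\,P(S_k=x)\le(2\pi\sigma^2)^{-1/2}+1$ for $k\ge N$, while $P(S_k=x)\le1\le\sqrt{N}/\sqrt{k}$ for $1\le k<N$; hence $P(S_k=x)\le C k^{-1/2}$ for all $k\ge1$ and $x\in\Z$. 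This controls the head of the sum: for any $\varepsilon>0$,
\[
\frac1{\sqrt n}\sum_{k=0}^{\lfloor\varepsilon n\rfloor}P(S_k=a_n)\ \le\ \frac1{\sqrt n}\Bigl(1+C\sum_{k=1}^{\lfloor\varepsilon n\rfloor}k^{-1/2}\Bigr)\ \le\ \frac1{\sqrt n}+C'\sqrt\varepsilon\ \xrightarrow[n\to\infty]{}\ C'\sqrt\varepsilon ,
\]
which is uniformly negligible as $\varepsilon\to0$. On the integral side the matching tail $\int_0^\varepsilon(2\pi\sigma^2 s)^{-1/2}e^{-a^2/(2s\sigma^2)}\,ds\to0$ as $\varepsilon\to0$, since the integrand is integrable near $0$ (it is $O(s^{-1/2})$, and tends to $0$ if $a\ne0$).

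Next, on the range $\lfloor\varepsilon n\rfloor\le k\le\lfloor nt\rfloor-1$ I would use the local CLT in the quantitative form $\sup_{x\in\Z}\bigl|\sqrt{k}\,P(S_k=x)-(2\pi\sigma^2)^{-1/2}e^{-x^2/(2k\sigma^2)}\bigr|=\delta_k\to0$. Every such $k$ exceeds $\varepsilon n\to\infty$, so the total replacement error is at most
\[
\frac1{\sqrt n}\sum_{k\le nt}\frac{\delta_k}{\sqrt k}\ \le\ \Bigl(\sup_{k\ge\varepsilon n}\delta_k\Bigr)\cdot\frac1{\sqrt n}\sum_{k\le nt}k^{-1/2}\ \le\ C\sqrt t\,\sup_{k\ge\varepsilon n}\delta_k\ \xrightarrow[n\to\infty]{}\ 0 .
\]
It then remains to evaluate $\frac1{\sqrt n}\sum_{k=\lfloor\varepsilon n\rfloor}^{\lfloor nt\rfloor-1}(2\pi\sigma^2 k)^{-1/2}e^{-a_n^2/(2k\sigma^2)}$. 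Writing $s=k/n$, this equals $\frac1n\sum_{k:\,\varepsilon\le k/n\le t}g_n(k/n)$ with $g_n(s)=(2\pi\sigma^2 s)^{-1/2}\exp\bigl(-(a_n^2/n)/(2s\sigma^2)\bigr)$; since $a_n^2/n\to a^2$, the $g_n$ converge uniformly on the compact interval $[\varepsilon,t]$ to the continuous function $g(s)=(2\pi\sigma^2 s)^{-1/2}e^{-a^2/(2s\sigma^2)}$, so the Riemann sums converge to $\int_\varepsilon^t g(s)\,ds$.

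Combining the three pieces traps $\limsup_n$ and $\liminf_n$ of $\frac1{\sqrt n}\sum_{k=0}^{\lfloor nt\rfloor-1}P(S_k=a_n)$ within $O(\sqrt\varepsilon)$ of $\int_\varepsilon^t g(s)\,ds$; letting $\varepsilon\to0$ yields the limit $\int_0^t(2\pi\sigma^2 s)^{-1/2}e^{-a^2/(2s\sigma^2)}\,ds$, and the substitution $v=\sigma^2 s$ rewrites this as $\sigma^{-2}\int_0^{\sigma^2 t}(2\pi v)^{-1/2}e^{-a^2/(2v)}\,dv$, which is the right-hand side of \eqref{2-l11}. The only place needing care is the bookkeeping in the $\varepsilon$-truncation, i.e.\ checking that the head of the sum and the tail of the integral are both small uniformly in $n$; there is no genuine obstacle, because the local CLT simultaneously supplies the Gaussian approximation on $k\ge\varepsilon n$ and, as a byproduct, the uniform $O(k^{-1/2})$ bound used to discard the small indices.
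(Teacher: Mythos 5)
Your proof is correct and follows exactly the route the paper intends: the paper states the lemma without proof, attributing it to the local CLT (Theorem 3.5.2 in Durrett), and your argument—uniform $Ck^{-1/2}$ bound to discard the head, local CLT replacement on $k\ge\varepsilon n$, Riemann-sum convergence, then $\varepsilon\to0$ and the substitution $v=\sigma^2 s$—is the standard fleshing-out of that citation. No gaps.
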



%
%
%
%
%


The analysis of height fluctuations  begins with a decomposition of     the scaled height function  as  
\be
\fluc_n(t,r)=\mu_0\overline{H}_n(t,r)+\overline{F}_n(t,r)+\overline{S}_n(t,r)\label{2-7}
\ee
where, with   $y(n)=\lfloor ntb\rfloor +\lfloor r\sqrt{n}\rfloor$,  
\begin{align}
\overline{H}_n(t,r)&=n^{-1/4}\bigl( E  \bigl[X_{\lfloor nt\rfloor}^{y(n)}\bigr]-r\sqrt{n}\,\bigr), \label{2-4}\\
\overline{F}_n(t,r)&=n^{-1/4}\sum_{k=1}^{\lfloor nt\rfloor}\sum_{x\in\mathbb{Z}}\xi_k(x) P \bigl(X_{\lfloor nt\rfloor -k}^{y(n)}=x\bigr)=n^{-1/4}\sum_{k=1}^{\lfloor nt\rfloor} E \bigl[ \xi_k(X_{\lfloor nt\rfloor -k}^{y(n)})\bigr] ,\label{2-6}\\
\overline{S}_n(t,r)&=n^{-1/4}\sum_{i\in\mathbb{Z}}\bigl(\eta_0(i)-\mu_0\bigr)\left\{\ind_{\{i>0\}} P \bigl(X_{\lfloor nt\rfloor}^{y(n)}\ge i\bigr)-\ind_{\{i\le 0\}} P \bigl(X_{\lfloor nt\rfloor}^{y(n)}< i\bigr)\right\}.  \label{2-5}
\end{align}
  Recall that   $b=-\pmu=- E  [X^0_1]$. 
 \eqref{2-7} follows from the random walk representation 
 \be
\fluc_n(t,r)=n^{-1/4}\biggl\{ E \bigl[\h_0(X_{\lfloor nt\rfloor}^{y(n)})\bigr]+\sum_{k=1}^{\lfloor nt\rfloor} E\bigl[ \xi_k(X_{\lfloor nt\rfloor -k}^{y(n)})\bigr] -\mu_0r\sqrt{n}\biggr\}\label{2-l1}\ee
from \eqref{21-l1},   $\h_0(0)=0$,  $\eta_0(k)=\h_0(k)-\h_0(k-1)$, and 
 from 
\begin{align*}
&  E [\h_0(X_{\lfloor nt\rfloor}^{y(n)})]-\mu_0r\sqrt{n}\\
&=  E \biggl[\ind_{\{X_{\lfloor nt\rfloor}^{y(n)}>0\}}\sum_{i=1}^{X_{\lfloor nt\rfloor}^{y(n)}}\eta_0(i)-\ind_{\{X_{\lfloor nt\rfloor}^{y(n)}<0\}}\sum_{i=X_{\lfloor nt\rfloor}^{y(n)}+1}^0\eta_0(i)\biggr]-\mu_0r\sqrt{n}\\
&=\sum_{i>0}\eta_0(i) P  (X_{\lfloor nt\rfloor}^{y(n)}\ge i)-\sum_{i\le 0}\eta_0(i) P (X_{\lfloor nt\rfloor}^{y(n)}< i)-\mu_0r\sqrt{n}\\
&=n^{1/4}\mu_0\overline{H}_n(t,r)+n^{1/4}\overline{S}_n(t,r).
\end{align*}

The three terms in \eqref{2-7} will be treated separately both for the convergence of finite-dimensional distributions (this section)  and process-level tightness (Section \ref{sec:tight}).  

Begin by observing that the   first term $\mu_0\overline{H}_n(t,r)$  on the right of \eqref{2-7} is irrelevant:   
since $E(X_{\lfloor nt\rfloor}^{y(n)})=\pmu\lfloor nt\rfloor +y(n)=-b\lfloor nt\rfloor+\lfloor ntb\rfloor +\lfloor r\sqrt{n}\rfloor$,   we have 
 $\overline{H}_n(t,r)=O(n^{-1/4})$  and  
 $\lim_{n\to\infty}\mu_0\overline{H}_n(t,r)=0$, uniformly over $(t,r)$.  
 
  Next note that $\overline{F}_n$ and $\overline{S}_n$ are independent.  $\overline{F}_n$ depends only on $\xi$ and,  in the limit,  furnishes the first   term in \eqref{2-t13b}  and   \eqref{Z-int}.  
  $\overline{S}_n$ depends only on $\eta_0$  and  gives  in the limit the second   term in \eqref{2-t13b}  and   \eqref{Z-int}.     The weak limits of  $\overline{F}_n$ and $\overline{S}_n$ are treated separately in Propositions \ref{lmm3-10} and \ref{lmm3-8}.
 
 \medskip 
 
 We begin with   $\overline{F}_n$. 
   Let   $\{F(t,r): t\in\bR_+,r\in\bR\}$ be a   mean-zero Gaussian process with  covariance 
\be
 {E}\bigl[F(t,r)F(s,q)\bigr]=\frac{\sigma_\xi^2}{\sigma_1^2}\Gamma_1\bigl((t,r),(s,q)\bigr).\label{2-t9a}\ee
Here are  alternative expressions for $\Gamma_1$  \cite[Chapter 2]{sepp-10-ens}: 
\be\begin{aligned}
\Gamma_1\bigl((s,q),(t,r)\bigr)&=\int_{-\infty}^\infty \bigl[ P (B_{\sigma_1^2s}\le q-x) P (B_{\sigma_1^2t}>r-x)\\
&\qquad\qquad 
- P (B_{\sigma_1^2s}\le q-x,B_{\sigma_1^2t}>r-x)\bigr]\,\mathrm{d}x,
\end{aligned} \label{2-10a}\ee
where $B_t$ is a standard 1-dimensional Brownian motion,  and
\be\Gamma_1\bigl((s,q),(t,r)\bigr)=\frac{1}{2}\int_{\sigma_1^2|t-s|}^{\sigma_1^2(t+s)}\frac{1}{\sqrt{2\pi v}}\exp\bigl\{-\frac{1}{2v}(r-q)^2\bigr\}dv.\label{2-10b}\ee

 \begin{proposition}\label{lmm3-10}
 Assume \eqref{xi-ass} and   $\E[\xi_t(x)^4]<\infty$. 
 Then,   as $n\to\infty$,  the finite-dimensional distributions of the process $\overline{F}_n$   converge weakly    to  those of  $F$. 
\end{proposition}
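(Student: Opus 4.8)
The plan is to deduce joint asymptotic normality of $\big(\overline{F}_n(t_j,r_j)\big)_{j=1}^N$ from the Cram\'er--Wold device. Fix points $(t_1,r_1),\dots,(t_N,r_N)\in\R_+\times\R$ and reals $a_1,\dots,a_N$, write $y_j(n)=\fl{nt_jb}+\fl{r_j\sqrt n}$, and set $T_n=\sum_{j=1}^N a_j\,\overline{F}_n(t_j,r_j)$. By \eqref{2-6},
\[
T_n=\sum_{k\ge1}\sum_{x\in\Z}\xi_k(x)\,c_{n,k,x},\qquad
c_{n,k,x}=n^{-1/4}\sum_{j:\,k\le\fl{nt_j}}a_j\,p^{\fl{nt_j}-k}\bigl(y_j(n),x\bigr),
\]
a finite linear combination of the i.i.d.\ mean-zero variables $\xi_k(x)$, since $p$ has finite range. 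It is enough to show $T_n\Rightarrow\mathcal N(0,\Sigma)$ for $\Sigma=\dfrac{\sigma_\xi^2}{\sigma_1^2}\sum_{j,j'}a_ja_{j'}\,\Gamma_1\bigl((t_j,r_j),(t_{j'},r_{j'})\bigr)$, which by \eqref{2-t9a} equals $\Var\bigl(\sum_j a_jF(t_j,r_j)\bigr)$.

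The first step is to evaluate $\lim_n\Var(T_n)$. Orthogonality of the $\xi_k(x)$ gives
\[
\Var(T_n)=\sigma_\xi^2\sum_{k,x}c_{n,k,x}^2
=\sigma_\xi^2\,n^{-1/2}\sum_{j,j'}a_ja_{j'}\sum_{k=1}^{\fl{nt_j}\wedge\fl{nt_{j'}}}
P\bigl(X^{y_j(n)}_{\fl{nt_j}-k}=\widetilde{X}^{y_{j'}(n)}_{\fl{nt_{j'}}-k}\bigr),
\]
where $X$ and $\widetilde{X}$ are independent $p$-walks and we used $\sum_x p^A(y,x)p^B(y',x)=P(X^y_A=\widetilde{X}^{y'}_B)$. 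For a fixed pair with, say, $t_j\le t_{j'}$, the difference of the two walks has the law of an independent $q$-walk run $\fl{nt_j}-k$ steps plus a $p$-walk run the fixed number $\fl{nt_{j'}}-\fl{nt_j}$ of steps; this is a mean-zero lattice variable of variance $\sigma_1^2\bigl(\fl{nt_j}+\fl{nt_{j'}}-2k\bigr)$, and the event $\{X^{y_j(n)}_{\fl{nt_j}-k}=\widetilde{X}^{y_{j'}(n)}_{\fl{nt_{j'}}-k}\}$ pins it to the value $\fl{r_{j'}\sqrt n}-\fl{r_j\sqrt n}+O(1)\sim(r_{j'}-r_j)\sqrt n$. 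Applying the local central limit theorem (Lemma \ref{crllyB-1}, in the version for finite-range sums of two species of independent increments) termwise, and recognizing the $k$-sum as a Riemann sum in $v=\sigma_1^2\bigl(\fl{nt_j}+\fl{nt_{j'}}-2k\bigr)/n$, so that one unit of $k$ corresponds to $\tfrac{n}{2\sigma_1^2}\,dv$, gives
\[
n^{-1/2}\sum_{k=1}^{\fl{nt_j}\wedge\fl{nt_{j'}}}
P\bigl(X^{y_j(n)}_{\fl{nt_j}-k}=\widetilde{X}^{y_{j'}(n)}_{\fl{nt_{j'}}-k}\bigr)
\;\longrightarrow\;
\frac{1}{2\sigma_1^2}\int_{\sigma_1^2|t_j-t_{j'}|}^{\sigma_1^2(t_j+t_{j'})}
\frac{e^{-(r_j-r_{j'})^2/(2v)}}{\sqrt{2\pi v}}\,dv
=\frac{1}{\sigma_1^2}\,\Gamma_1\bigl((t_j,r_j),(t_{j'},r_{j'})\bigr),
\]
where the last equality is the representation \eqref{2-10b}. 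Multiplying by $\sigma_\xi^2a_ja_{j'}$ and summing over $j,j'$ shows $\Var(T_n)\to\Sigma$.

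The second step is the asymptotic normality of $T_n$. Because $|c_{n,k,x}|\le\rho_n:=n^{-1/4}\sum_j|a_j|\to0$ uniformly in $(k,x)$, while $\sum_{k,x}c_{n,k,x}^2=\sigma_\xi^{-2}\Var(T_n)$ is bounded, the sum $T_n$ of independent summands $\xi_k(x)c_{n,k,x}$ satisfies Lindeberg's condition already under \eqref{xi-ass}: for every $\e>0$,
\[
\sum_{k,x}\E\bigl[\xi_k(x)^2 c_{n,k,x}^2\,\ind\{|\xi_k(x)c_{n,k,x}|>\e\}\bigr]
\le\Bigl(\sum_{k,x}c_{n,k,x}^2\Bigr)\,\E\bigl[\xi_0(0)^2\ind\{|\xi_0(0)|>\e/\rho_n\}\bigr]\longrightarrow0.
\]
(Alternatively one may expand characteristic functions as in the proof of Theorem \ref{h-thm}(a): with $\varphi(\alpha)=\E e^{\iota\alpha\xi_0(0)}=1-\tfrac12\sigma_\xi^2\alpha^2+o(\alpha^2)$,
\[
\E e^{\iota\theta T_n}=\prod_{k,x}\varphi\bigl(\theta c_{n,k,x}\bigr)
=\exp\Bigl\{-\tfrac12\theta^2\sigma_\xi^2\sum_{k,x}c_{n,k,x}^2+o(1)\Bigr\}\longrightarrow e^{-\theta^2\Sigma/2},
\]
the error term being controlled by $\max_{k,x}|c_{n,k,x}|\le\rho_n\to0$ and boundedness of $\sum_{k,x}c_{n,k,x}^2$.) Combined with the variance limit and Cram\'er--Wold this yields the finite-dimensional convergence.

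I expect the variance step to be the main obstacle. It requires a local limit theorem that is uniform in the target point and that applies to the two-species (one $q$-increment, one $p$-increment), finite-range sums produced by the decomposition, together with control of the small-$v$ end of the limiting integral --- where the Gaussian density blows up but $\int_0 v^{-1/2}\,dv<\infty$ rescues the estimate --- and of the $O(1)$ errors coming from the floor functions and from $b\notin\Z$. This is essentially the computation performed for the random-walk / exclusion current in \cite[Chapter 2]{sepp-10-ens}, where the alternative expressions \eqref{2-10a}--\eqref{2-10b} for $\Gamma_1$ are derived; the hypothesis $\E[\xi_t(x)^4]<\infty$ inherited from Theorem \ref{thm2-4} plays no role here beyond \eqref{xi-ass}.
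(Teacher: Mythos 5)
Your overall architecture (Cram\'er--Wold, covariance limit, Lindeberg--Feller) matches the paper's, but the two key steps are executed differently, one for the better and one with a gap you yourself flag. On the CLT step your version is genuinely cleaner: by indexing the triangular array by individual pairs $(k,x)$, each summand $\xi_k(x)c_{n,k,x}$ has a \emph{deterministic} coefficient bounded by $n^{-1/4}\sum_j|a_j|$, so Lindeberg's condition follows from $\E[\xi^2]<\infty$ alone, exactly as you compute. The paper instead groups the summands by the time index $k$ into blocks $V_{n,k}=n^{-1/4}\sum_{j}\theta_j\sum_x\xi_k(x)p^{\fl{nt_j}-k}(y_j(n),x)$, which are not uniformly small, and verifies negligibility via a fourth-moment bound $\E|V_{n,k}|^4\le Cn^{-1}\sum_j((\fl{nt_j}-k)\vee1)^{-1}$; this is where the hypothesis $\E[\xi_t(x)^4]<\infty$ enters. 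Your observation that the fourth moment is not actually needed for this proposition is correct.

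On the covariance step there is a genuine gap as written: you invoke ``Lemma \ref{crllyB-1} in the version for finite-range sums of two species of independent increments,'' but no such version is proved, and the number of $q$-steps varies with $k$ while the number of $p$-steps is of order $n$, so the statement you need is a uniform local CLT for a $k$-dependent mixed sum, not the stated lemma. The paper closes exactly this gap by conditioning on $X^{y_j(n)}_{\fl{nt_{j'}}-\fl{nt_j}}$ (the Markov property), which turns the inner sum into $\sum_k q^k(0,X_n')$ with a single random target satisfying $n^{-1/2}X_n'\Rightarrow B_{\sigma_1^2|t-s|}+(r-q)$; Lemma \ref{crllyB-1} then applies along an almost-sure Skorohod representation, dominated convergence is justified by $q^k(0,y)\le Ck^{-1/2}$, and the resulting Gaussian expectation collapses to \eqref{2-10b} by convolution of Gaussians. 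If you adopt that conditioning, your argument goes through with only the tools already in the paper, and in fact yields the proposition under the bare assumption \eqref{xi-ass}.
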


\begin{proof}
First we argue the convergence of the covariance of $\overline{F}_n$.  
  Let  $X_\centerdot^{\lfloor ntb\rfloor +\lfloor r\sqrt{n}\rfloor}$ and $X_\centerdot^{\lfloor nsb\rfloor +\lfloor q\sqrt{n}\rfloor}$ denote two independent random walks with transition probability $p$ (even if $(t,r)$ and $(s,q)$ should happen to  coincide).


For $s=t$ and  $r, q\in\bR$, set  $x_n=\lfloor r\sqrt{n}\rfloor -\lfloor q\sqrt{n}\rfloor$. Then,
\begin{align*}
&\mathbb{E}\bigl[\,\overline{F}_n(t,r)\overline{F}_n(t,q)\bigr]=n^{-1/2}\sum_{k=1}^{\lfloor nt\rfloor}\sum_{x\in\mathbb{Z}}\sigma_\xi^2 P \bigl(X_{\lfloor nt\rfloor -k}^{\lfloor ntb\rfloor +\lfloor r\sqrt{n}\rfloor}=x\bigr) P \bigl(X_{\lfloor nt\rfloor -k}^{\lfloor ntb\rfloor +\lfloor q\sqrt{n}\rfloor}=x\bigr)\\
&=n^{-1/2}\sigma_\xi^2\sum_{k=1}^{\lfloor nt\rfloor}q^{\lfloor nt\rfloor -k}(x_n,0)=n^{-1/2}\sigma_\xi^2\sum_{k=0}^{\lfloor nt\rfloor-1}q^k(0,x_n)\\
&\underset{n\to\infty} \longrightarrow  \ \frac{\sigma_\xi^2}{2\sigma_1^2}\int_0^{2\sigma_1^2t}\frac{1}{\sqrt{2\pi v}}\exp\Bigl\{-\frac{(r-q)^2}{2v}\Bigr\}dv
\;=\; \frac{\sigma_\xi^2}{\sigma_1^2}\Gamma_1\bigl((t,q),(t,r)\bigr).
\end{align*}
The limit came from  \eqref{2-l11}. 

%

Next take   $s<t$ and set $X_n'=X_{\lfloor nt\rfloor-\lfloor ns\rfloor}^{\lfloor ntb\rfloor +\lfloor r\sqrt{n}\rfloor}-\lfloor nsb\rfloor -\lfloor q\sqrt{n}\rfloor$. By  the Markov property, 
\begin{align*}
&\mathbb{E}\bigl[\,\overline{F}_n(t,r)\overline{F}_n(s,q)\bigr]=n^{-1/2}\sigma_\xi^2\sum_{k=1}^{\lfloor ns\rfloor} P \bigl(X_{\lfloor nt\rfloor-k}^{\lfloor ntb\rfloor +\lfloor r\sqrt{n}\rfloor}-X_{\lfloor ns\rfloor-k}^{\lfloor nsb\rfloor +\lfloor q\sqrt{n}\rfloor}=0\bigr)\\
&=n^{-1/2}\sigma_\xi^2 E \biggl[\; \sum_{k=1}^{\lfloor ns\rfloor} P \bigl(X_{\lfloor nt\rfloor-k}^{\lfloor ntb\rfloor +\lfloor r\sqrt{n}\rfloor}-X_{\lfloor ns\rfloor-k}^{\lfloor nsb\rfloor +\lfloor q\sqrt{n}\rfloor}=0\; \big\vert\; X_{\lfloor nt\rfloor-\lfloor ns\rfloor}^{\lfloor ntb\rfloor +\lfloor r\sqrt{n}\rfloor}\bigr)\biggr]\\
&=n^{-1/2}\sigma_\xi^2 E \biggl[\;\sum_{k=1}^{\lfloor ns\rfloor}q^{\lfloor ns\rfloor-k}\left(X_{\lfloor nt\rfloor-\lfloor ns\rfloor}^{\lfloor ntb\rfloor +\lfloor r\sqrt{n}\rfloor}-\lfloor nsb\rfloor -\lfloor q\sqrt{n}\rfloor\,,\, 0\right)\biggr] \\
&=n^{-1/2}\sigma_\xi^2 \sum_{k=0}^{\lfloor ns\rfloor-1}  E  [\,q^{k}(0,X_n')\,] .
\end{align*}
 
We use  again   Lemma  \ref{crllyB-1}   to derive the limit. By the CLT,   $n^{-1/2}X_n'\Rightarrow B_{\sigma_1^2|t-s|}+(r-q)$. 
By a standard construction (Theorem 3.2.2 in \cite{durr}), we can  find random variables  $\hat X_n\overset{d}=X_n'$ such that $n^{-1/2}\hat X_n\to B_{\sigma_1^2|t-s|}+(r-q)$ almost surely.  
Then by \eqref{2-l11},
$$\lim_{n\to\infty}n^{-1/2}\sum_{k=0}^{\lfloor ns\rfloor-1}q^{k}(0,\hat X_n)=\frac{1}{2\sigma_1^2}\int_0^{2\sigma_1^2s}\frac{1}{\sqrt{2\pi v}}\exp\Bigl\{-\frac{(B_{\sigma_1^2|t-s|}+r-q)^2}{2v}\Bigr\}dv\quad \text{a.s.}$$
By the uniform bound $q^k(0,y)\le Ck^{-1/2}$  we can apply dominated convergence.  
\be\begin{aligned}
&\lim_{n\to\infty}\mathbb{E}\bigl[\,\overline{F}_n(t,r)\overline{F}_n(s,q)\bigr]=\lim_{n\to\infty}n^{-1/2}\sigma_\xi^2 E \biggl[\sum_{k=0}^{\lfloor ns\rfloor-1}q^{k}(0,\hat X_n)\biggr]\\
&=\sigma_\xi^2 E \biggl[\frac{1}{2\sigma_1^2}\int_0^{2\sigma_1^2s}\frac{1}{\sqrt{2\pi v}}\exp\Bigl\{-\frac{(B_{\sigma_1^2|t-s|}+r-q)^2}{2v}\Bigr\}dv\biggr]\\
&=\frac{\sigma_\xi^2}{2\sigma_1^2}\int_{\sigma_1^2|t-s|}^{\sigma_1^2(t+s)}\frac{1}{\sqrt{2\pi v}}\exp\Bigl\{-\frac{(r-q)^2}{2v}\Bigr\}dv \;= \; \frac{\sigma_\xi^2}{\sigma_1^2}\Gamma_1\bigl((t,r),(s,q)\bigr).
\end{aligned}  \label{F-1}\ee
The second  last equality comes from a convolution of Gaussians.   
 


Having taken care of the covariance, we turn to  derive the weak limit.  Fix  $N\in\bN$, $(t_j,r_j)\in \bR_+\times \bR$ and $\theta_j\in \bR$ for $j=1,\ldots,N$,  and arrange the indices so   that $0=t_0\le t_1\le t_2\le\cdots\le t_N$.  Then, with   $\ell(k)=i$ iff $\lfloor nt_i\rfloor+1\le k \le \lfloor nt_{i+1}\rfloor$,  
\begin{align*}
&\sum_{j=1}^N\theta_j \overline{F}_n(t_j,r_j)
=n^{-1/4}\sum_{j=1}^N\theta_j\sum_{k=1}^{\lfloor nt_j\rfloor}\sum_{x\in\bZ}\xi_k(x)p^{\lfloor nt_j\rfloor -k}\left(\lfloor nt_jb\rfloor +\lfloor r_j\sqrt{n}\rfloor,x\right)\\
&\quad  =\sum_{\ell=0}^{N-1}\sum_{k=\lfloor nt_{\ell}\rfloor+1}^{\lfloor nt_{\ell+1}\rfloor}n^{-1/4}\sum_{j=\ell+1}^N\theta_j\sum_{x\in\bZ}\xi_k(x)p^{\lfloor nt_j\rfloor -k}\left(\lfloor nt_jb\rfloor +\lfloor r_j\sqrt{n}\rfloor,x\right)\\
&\quad  =\sum_{k=1}^{\lfloor nt_{N}\rfloor}n^{-1/4}\sum_{j=\ell(k)+1}^N\theta_j\sum_{x\in\bZ}\xi_k(x)p^{\lfloor nt_j\rfloor -k}\left(\lfloor nt_jb\rfloor +\lfloor r_j\sqrt{n}\rfloor,x\right)  
\;=\; \sum_{k=1}^{\lfloor nt_{N}\rfloor}  V_{n,k}
\end{align*}
where the last equality defines variables $V_{n,k}$, independent for a fixed $n$. 
We apply the Lindeberg-Feller theorem (Theorem 3.4.5 in \cite{durr})  to $\{V_{n,k}\}$. 
 \eqref{F-1} gives 
\be\label{ff-4} \lim_{n\to\infty}\sum_{k=1}^{\lfloor nt_{N}\rfloor}\bE V_{n,k}^2=\sum_{i,j=1}^N\theta_i \theta_j\frac{\sigma_\xi^2}{\sigma_1^2}\Gamma_1\bigl((t_i,r_i),(t_j,r_j)\bigr).\ee

As preparation for the  second condition of Lindeberg-Feller (the negligibility condition (ii) of Theorem 3.4.5 on p.~129 in \cite{durr})  we estimate a moment.  Abbreviate  $p_j(x)=p^{\lfloor nt_j\rfloor -k}\left(\lfloor nt_jb\rfloor +\lfloor r_j\sqrt{n}\rfloor,x\right)$.  Use    $\bE[\xi_n(k)]=0$,   $\E[\xi_t(x)^4]<\infty$ and   $q^k(x,y)\le Ck^{-1/2}$. 
\be\label{ff-5}\begin{aligned}
\bE[\,\abs{V_{n,k}}^{4} \,] \;  &\le\; \frac{C}{n} \sum_{j=\ell(k)+1}^N  
 \bE\biggl[ \Bigl( \,\sum_{x\in\bZ}\xi_k(x)p_j(x)\Bigr)^4\,\biggr]\\
&\le  \;\frac{C}{n}  \sum_{j=\ell(k)+1}^N \Bigl(\,   \sum_{x,y\in\bZ}  p_j(x)^2 p_j(y)^2
+  \sum_{x\in\bZ}  p_j(x)^4\Bigr) \\
&\le \;\frac{C}{n}  \sum_{j=\ell(k)+1}^N \bigl[ q^{\lfloor nt_j\rfloor -k}\left(0,0\right)\bigr]^2
\;\le\; \frac{C}{n}  \sum_{j=\ell(k)+1}^N  \frac{1}{(\lfloor nt_j\rfloor -k)\vee 1}. 
\end{aligned}\ee
By H\"older's and Chebyshev's inequalities and by \eqref{ff-5}, for  $\e>0$,
\be\label{ff-6}\begin{aligned}
&\sum_{k=1}^{\lfloor nt_{N}\rfloor}\bE\left[V_{n,k}^2 {\bf 1}\{|V_{n,k}|\ge\e\}\right]
\;\le\; 
\e^{-2}\sum_{k=1}^{\lfloor nt_{N}\rfloor} \bE[\,\abs{V_{n,k}}^{4} ] 
\;\le\;   \frac{C}{n\e^2}   \sum_{j=1}^N  \sum_{k=1}^{\lfloor nt_{j}\rfloor} \frac{1}{(\lfloor nt_j\rfloor -k)\vee 1}\\
&\qquad  \le\;   \frac{C}{n\e^2}   (1+\log n)  \;\longrightarrow\; 0  \quad \text{as} \; \;  n\to\infty.  
\end{aligned} \ee


  \eqref{ff-4} and \eqref{ff-6} verify the hypotheses of  the  Lindeberg-Feller  theorem, which implies 
\begin{align*}
\sum_{j=1}^N\theta_j \overline{F}_n(t_j,r_j)  
= \sum_{k=1}^{\lfloor nt_{N}\rfloor} V_{n,k} 
\;\Rightarrow \;    \sum_{j=1}^N\theta_j F(t_j,r_j). 
\end{align*}
The proof for Proposition  \ref{lmm3-10} is complete.
\end{proof}

We turn to   $\overline{S}_n$.  The hypotheses on the initial increments are now relevant.  
Let $\{S(t,r):t\in\bR_+,r\in\bR\}$ be the  mean-zero Gaussian process with   covariance 
\be E\bigl[S(t,r)S(s,q)\bigr]=\varrho_0\Gamma_2\bigl((t,r),(s,q)\bigr).\label{2-t6a}\ee
The function $\Gamma_2$ has this   alternative formulation  \cite[Chapter 2]{sepp-10-ens}:
\be\begin{aligned} 
\Gamma_2\bigl((s,q),(t,r)\bigr)&=\int_{-\infty}^0 P (B_{\sigma_1^2s}>q-x) P (B_{\sigma_1^2t}>r-x)\,\mathrm{d}x\\
&\qquad +\; 
\int_0^\infty  P (B_{\sigma_1^2s}\le q-x) P (B_{\sigma_1^2t}\le r-x)\,\mathrm{d}x \end{aligned} \label{2-r7}\ee
where $B_t$ is a standard 1-dimensional Brownian motion.

\begin{proposition}\label{lmm3-8}
Under the conditions in Theorem \ref{thm2-4},   as $n\to\infty$,  the finite-dimensional distributions of the process $\overline{S}_n$   converge weakly    to  those of  $S$. 
\end{proposition}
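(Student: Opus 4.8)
The plan is to fix $N$, points $(t_j,r_j)\in\bR_+\times\bR$ and reals $\theta_j$, and prove that the single linear combination $T_n:=\sum_{j=1}^N\theta_j\,\overline{S}_n(t_j,r_j)$ converges to a centered normal with variance $\sigma^2:=\varrho_0\sum_{j,k}\theta_j\theta_k\,\Gamma_2\bigl((t_j,r_j),(t_k,r_k)\bigr)$; the Cram\'er--Wold device then yields the asserted convergence of finite-dimensional distributions. Writing $c_{n,i}(t,r)=n^{-1/4}\bigl[\ind_{i>0}P(X_{\fl{nt}}^{y(n)}\ge i)-\ind_{i\le0}P(X_{\fl{nt}}^{y(n)}<i)\bigr]$ and $a_{n,i}=\sum_j\theta_j\,c_{n,i}(t_j,r_j)$, one has the weighted-sum representation $T_n=\sum_{i\in\bZ}a_{n,i}\bigl(\eta_0(i)-\mu_0\bigr)$. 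Because $p$ has finite range, $\{a_{n,i}\}_i$ has finite support, and from the local CLT bound $\sup_x P(X_{\fl{nt}}^{y(n)}=x)\le Cn^{-1/2}$ (P7.6 in \cite{spitzer}) one reads off the structural facts that drive the proof: $\max_i|a_{n,i}|\le Cn^{-1/4}$; the increments $a_{n,i+1}-a_{n,i}$ are $O(n^{-3/4})$ uniformly outside the at most $N$ sites where an indicator switches; hence the total variation $\sum_i|a_{n,i+1}-a_{n,i}|$ is $O(n^{-1/4})$; and, up to Gaussian tails, $\{a_{n,i}\}$ is supported on $O(\sqrt n)$ sites.

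First I would settle the variance. Absolute summability of $\varrho_0=\sum_x\mCov[\eta_0(0),\eta_0(x)]$ holds in each case: trivially under (a); under (c) from the exponential bound $|V_0(0,x)|\le Ae^{-c|x|}$ of Theorem~\ref{thm-cov}; and under (b) from a covariance inequality for strongly mixing sequences (Rio's inequality, see \cite{brad-05}) combined with $\bE|\eta_0(0)|^{2+\delta}<\infty$ and the rate \eqref{eta-a7}, which together force $\sum_x|\mCov[\eta_0(0),\eta_0(x)]|<\infty$. Then $\mVar[T_n]=\sum_{i,i'}a_{n,i}a_{n,i'}\mCov[\eta_0(i),\eta_0(i')]$, and the structural bounds give $\bigl|\mVar[T_n]-\varrho_0\sum_i a_{n,i}^2\bigr|\le C\sum_x|\mCov[\eta_0(0),\eta_0(x)]|\min\bigl(|x|n^{-1/2},1\bigr)$, which tends to $0$ by dominated convergence. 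Finally $\sum_i a_{n,i}^2=\sum_{j,k}\theta_j\theta_k\sum_i c_{n,i}(t_j,r_j)c_{n,i}(t_k,r_k)$, and a Riemann-sum computation — replacing the walk probabilities by their Gaussian approximations via the CLT and recognizing $n^{-1/2}\sum_i(\cdot)$ as $\int_\bR(\cdot)\,\mathrm dx$, split according to the two indicators — identifies $\lim_n\sum_i c_{n,i}(t,r)c_{n,i}(s,q)=\Gamma_2\bigl((t,r),(s,q)\bigr)$ in the form \eqref{2-r7}. Hence $\mVar[T_n]\to\sigma^2$, and along the way the absolute convergence of $\varrho_0$ asserted in the theorem is established.

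For the Gaussian limit the two dependence structures are handled separately. In case (b) I would invoke the Peligrad--Utev CLT for linear processes \cite{peli-utev-97}: the base sequence $\{\eta_0(i)-\mu_0\}$ is stationary and centered, $n^{-1}\mVar\bigl[\sum_{i=1}^n\eta_0(i)\bigr]\to\varrho_0$, and the weak-dependence hypothesis of that theorem is supplied by the strong-mixing rate \eqref{eta-a7} together with $\bE|\eta_0(0)|^{2+\delta}<\infty$; the coefficients meet the theorem's requirements since $\max_i|a_{n,i}|\to0$, $\sum_i a_{n,i}^2\to\sigma^2/\varrho_0$, and the total variation of $\{a_{n,i}\}$ is $O(n^{-1/4})$, giving $T_n\Rightarrow N(0,\sigma^2)$. (Equivalently this is a Bernstein big-block/small-block argument: cut $\bZ$ into blocks of length $L_n$ with $1\ll L_n\ll\sqrt n$ separated by gaps that tend to $\infty$, freeze the slowly varying $a_{n,\cdot}$ at each block's midpoint with negligible $L^2$ cost, discard the gaps, replace block sums by independent copies using mixing, apply Ibragimov's CLT per block, and finish by Lindeberg since $\max_m|\bar a_m|\sqrt{L_n}=o(1)$.) In case (c) it is cleanest to exploit the linear structure directly: there $\mu_0=0$ and, by a summation by parts, $T_n=\sum_{k\ge0}\sum_{y\in\bZ}\xi_{-k}(y)\,b_{n,k,y}$ with $b_{n,k,y}=\sum_i\bigl(a_{n,i}-a_{n,i+1}\bigr)p^k(i,y)$, a weighted sum of the i.i.d.\ array $\{\xi_{-k}(y)\}$; since $|b_{n,k,y}|\le\bigl(\sum_i|a_{n,i}-a_{n,i+1}|\bigr)\sup_i p^k(i,y)=O(n^{-1/4})$ uniformly and $\sigma_\xi^2\sum_{k,y}b_{n,k,y}^2=\mVar[T_n]\to\sigma^2$, the Lindeberg--Feller theorem applies as in the proof of Proposition~\ref{lmm3-10} and yields $T_n\Rightarrow N(0,\sigma^2)$. (One may also route case (c) through \cite{peli-utev-97}, observing that $\{\eta_0(i)\}$ is a linear process in i.i.d.\ noise with square-summable coefficients.)

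The main obstacle is the CLT in case (b): the coefficient array $a_{n,i}$ is not of moving-average form but a superposition of unimodal bumps of height $n^{-1/4}$ and width $\sqrt n$, and one must certify that this profile, combined only with polynomially decaying mixing, still produces a Gaussian limit — this is exactly the role of \cite{peli-utev-97} (or, in the hands-on blocking argument, of the delicate balance between block length, mixing rate, and Lindeberg negligibility). The variance identification is routine by comparison, but the covariance-replacement estimate must be carried out with the indicator-switch jumps of $a_{n,\cdot}$ in mind, which is why only absolute summability of the covariances, and no rate, enters there.
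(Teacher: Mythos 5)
Your proposal is correct, and case (b) follows the paper's route exactly: Rio's covariance inequality to get absolute summability of $\sum_x\mCov[\eta_0(0),\eta_0(x)]$ under the moment and mixing-rate hypotheses, identification of the limiting variance, and then the Peligrad--Utev linear-process CLT applied to the normalized coefficients $a_{n,i}/\bar\sigma_n$ (the paper also records the trivial degenerate subcase $\sigma^2=0$ separately, which you leave implicit but which follows from Chebyshev). Your variance identification is organized a little differently --- you replace $a_{n,i+x}$ by $a_{n,i}$ with the quantitative error $\min(|x|n^{-1/2},1)$ coming from the total-variation bound on $a_{n,\cdot}$, whereas the paper's Lemma \ref{a-lm3} computes $\lim_n\sum_i a_{n,i}a_{n,i+k}$ for each fixed $k$ and shows the limit is independent of $k$ --- but these are the same smoothness fact and both close via dominated convergence. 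The genuine divergence is case (c): the paper truncates the noise expansion at a level $\ell(n)$ with $n/\sqrt{\ell(n)}\to0$, kills the tail $T_{n,2}$ in $L^2$ by an explicit characteristic-function computation (Lemma \ref{lmm3-9case2}), and applies Lindeberg--Feller to the finite remaining array; you instead sum by parts to write the full linear combination as $\sum_{k\ge0}\sum_y\xi_{-k}(y)b_{n,k,y}$ and apply Lindeberg--Feller directly, getting the variance from Theorem \ref{thm-cov} rather than from a truncated potential-kernel sum. This is cleaner in that it avoids the Fourier tail estimate entirely, but you should add one sentence reducing the countably infinite row to a finite one: for each fixed $n$ the series $\sum_{k,y}b_{n,k,y}^2$ converges (it equals $\mVar[T_n]/\sigma_\xi^2$), so one may cut at a level $K_n$ whose tail variance is at most $1/n$ and apply the finite-row theorem; your uniform bound $|b_{n,k,y}|=O(n^{-1/4})$ together with $\bE[\xi^4]<\infty$ then gives the Lindeberg condition exactly as in Proposition \ref{lmm3-10}. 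With that sentence supplied, your case (c) argument is a valid and somewhat more economical alternative to the paper's.
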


\begin{proof}   We have three cases in Theorem \ref{thm2-4}. 

\smallskip

Case (a).  The proof of the lemma under the i.i.d.\  and second moment assumption   goes  via the  Lindeberg-Feller theorem  and can be found in \cite{zhai-phd}.  

\smallskip

Case (b).  
Now  assume that the initial increments $\eta_0=\{\eta_0(x)\}_{x\in\bZ}$ are a stationary, strongly mixing sequence such that,  for some $\delta>0$,    $\mE|\eta_0(0)|^{2+\delta}<\infty$  and  the strong mixing coefficients  $\{\alpha(j)\}$ of $\eta_0$ satisfy $\sum_{j=0}^\infty (j+1)^{2/\delta}\alpha(j)<\infty$. 
  
  We   show the distributional convergence 
\be\label{S-goal} \sum_{j=1}^N\theta_j \overline{S}_n(t_j,r_j)\Rightarrow \sum_{j=1}^N\theta_j S(t_j,r_j)\ee 
for a fixed vector of time-space points  $\{(t_j,r_j)\}_{1\le j\le N}\in (\bR_+\times \bR)^N$ and a fixed real vector   $\bar\theta=(\theta_1,\dotsc, \theta_N)$.  
Rewrite the liner combination  as 
\be\sum_{j=1}^N\theta_j \overline{S}_n(t_j,r_j)=\sum_{i\in\mathbb{Z}}a_{n,i}\bigl(\eta_0(i)-\mu_0\bigr) \label{mid-2b}\ee
where 
\be\begin{aligned}  a_{n,i}=n^{-1/4}\biggl\{{\bf 1}_{\{i>0\}}&\sum_{j=1}^N\theta_j P \bigl(X_{\lfloor nt_j\rfloor}^{\lfloor nt_jb\rfloor +\lfloor r_j\sqrt{n}\rfloor}\ge i\bigr)\\
&\qquad -\; 
{\bf 1}_{\{i\le 0\}}\sum_{j=1}^N\theta_j P \bigl(X_{\lfloor nt_j\rfloor}^{\lfloor nt_jb\rfloor +\lfloor r_j\sqrt{n}\rfloor}<i\bigr)\biggr\}.
\end{aligned} \label{mid-2}\ee


Our first task is the $n\to\infty$  limit of  the variance  
\be\begin{aligned} 
\bar{\sigma}_n^2 &= \mVar\Bigl[ \,\sum_{i\in \bZ}a_{n,i}\bigl(\eta_0(i)-\mu_0\bigr)\Bigr] =\sum_{j,k\in\bZ}a_{n,j}a_{n,k}\mCov (\eta_0(j),\eta_0(k))\\
&=\sum_{\ell\in \bZ}\mCov (\eta_0(0),\eta_0(\ell))\sum_{k\in\bZ}a_{n,k}a_{n,\ell+k}. 
\end{aligned} \label{slim-9}\ee

 The next  lemma   is part of Theorem 1.1 in Rio \cite{rio-lect}.   The quantile function  $Q_X$  of $|X|$  is defined by 
\[  Q_X(u)=\inf\{x\in\bR_+:  P (|X|>x)\le u\},  \qquad 0\le u\le 1. \]

\begin{lemma}\label{covariance-bound}
Let  $X$,  $Y$,  and  $XY$ be  integrable random variables   and let  $\alpha=\alpha\left(\sigma(X),\sigma(Y)\right)$. Then
\be |\Cov(X,Y)|\le 4\int_0^\alpha Q_X(u)Q_Y(u)\,du.\ee
\end{lemma}

We apply this lemma to  show  that the series of covariances $\sum_{k\in\bZ}\mCov[\eta_0(0),\eta_0(k)]$ is absolutely convergent.
Let $Q_{\eta}(u)$ denote the quantile function of $\abs{\eta_0(0)-\mu_0}$. Then
\begin{align}
&\sum_{\ell\in\bZ}\left|\mCov (\eta_0(0), \eta_0(\ell))\right|\le 4\sum_{\ell\in\bZ}\int_0^{\alpha(|\ell|)} Q_{\eta}(u)^2du\le 4\int_0^1 \sum_{\ell\in\bZ}\ind_{\{u\le\alpha(|\ell|)\}} Q_{\eta}(u)^2du\notag\\
&\le  4 \left[\int_0^1 \Bigl(\,\sum_{\ell\in\bZ}\ind_{\{u\le\alpha(|\ell|)\}}\Bigr)^{(2+\delta)/\delta}du\right]^{\delta/(2+\delta)}\left[\int_0^1Q_{\eta}(u)^{2+\delta}du\right]^{2/(2+\delta)}.\notag
\end{align}
Since  $\alpha(n)\searrow 0$ as $n\to\infty$, we have 
\be\label{aux6}\begin{aligned}
\int_0^1 \Bigl(\sum_{\ell\in\bZ}\ind_{\{u\le\alpha(|\ell|)\}}\Bigr)^{(2+\delta)/\delta}du
&=\sum_{j=0}^\infty \int_{\alpha(j+1)}^{\alpha(j)} (2j+1)^{(2+\delta)/\delta}du\\
&=\sum_{j=0}^\infty (2j+1)^{(2+\delta)/\delta}[\alpha(j)-\alpha(j+1)].
\end{aligned}\ee
By summation by parts,  by  $\alpha(n)\ge 0$, and by the summability assumption on $\{\alpha(j)\}$, 
\begin{align*}
&\sum_{j=0}^n (2j+1)^{(2+\delta)/\delta}[\alpha(j)-\alpha(j+1)]\\
&= \alpha(0)-(2n+1)^{(2+\delta)/\delta}\alpha(n+1)+\sum_{j=1}^{n}\left[(2j+1)^{(2+\delta)/\delta}-(2j-1)^{(2+\delta)/\delta}\right]\alpha(j)\\
&\le \alpha(0)+C\sum_{j=1}^{n} (j+1)^{2/\delta}\alpha(j)\le \alpha(0)+C\sum_{j=1}^{\infty} (j+1)^{2/\delta}\alpha(j) <\infty.
\end{align*}
Consequently the quantity in \eqref{aux6} is finite.  

For  a uniform variable $U$  on $(0,1)$,   $Q_{\eta}(U)\overset{d}=\abs{\eta_0(0)-\mu_0}$ and so 
$$\int_0^1\left(Q_{\eta}(u)\right)^{2+\delta}du=\mE\bigl[\,|\eta_0(0)-\mu_0|^{2+\delta}\,\bigr]<\infty.$$

We have shown that 
\be\begin{aligned} 
& \sum_{\ell\in\bZ}\left|\mCov (\eta_0(0), \eta_0(\ell))\right| 
\\
&\qquad \qquad 
\le C\biggl(\,\sum_{j=0}^{\infty} (j+1)^{2/\delta}\alpha(j)\biggr)^{\delta/(2+\delta)}\left(\mE\bigl[\,|\eta_0(0)-\mu_0|^{2+\delta}\,\bigr]\right)^{2/(2+\delta)}<\infty.\end{aligned} \label{slim-10}\ee 

Next a lemma for the other series in \eqref{slim-9}.  
\begin{lemma}\label{a-lm3} For a finite constant $C$
\be\label{a-bd} 
\sup_{n\in\N, k\in\bZ} \biggl\lvert\sum_{i\in\bZ}a_{n,i}a_{n,i+k}\biggr\rvert \le C<\infty. 
 \ee
For all $k\in\bZ$,
\be \lim_{n\to\infty}\sum_{i\in\bZ}a_{n,i}a_{n,i+k} = \sum_{1\le j_1, j_2\le N}\theta_{j_1}\theta_{j_2}\Gamma_2\left((t_{j_1},r_{j_1}),(t_{j_2},r_{j_2})\right).\label{a-sum-limit}\ee
\end{lemma}

\begin{proof} By the  Schwarz inequality 
\[   \biggl\lvert\sum_{i\in\bZ}a_{n,i}a_{n,i+k}\biggr\rvert \le \sum_{i\in\bZ}a_{n,i}^2 \]
and so \eqref{a-bd} follows from the finite limit in \eqref{a-sum-limit} for $k=0$. 

To prove \eqref{a-sum-limit} expand the sum: 
\be\label{a-sum}\begin{aligned}
&\sum_{i\in\bZ}a_{n,i}a_{n,i+k}\\
&=n^{-1/2}\sum_{1\le j_1, j_2\le N}\theta_{j_1}\theta_{j_2}\sum_{i>0, i+k>0} P \bigl(X_{\lfloor nt_{j_1}\rfloor}^{\lfloor nt_{j_1}b\rfloor +\lfloor r_{j_1}\sqrt{n}\rfloor}\ge i\bigr) P \bigl(X_{\lfloor nt_{j_2}\rfloor}^{\lfloor nt_{j_2}b\rfloor +\lfloor r_{j_2}\sqrt{n}\rfloor}\ge i+k\bigr)
\\
&-n^{-1/2}\sum_{1\le j_1, j_2\le N}\theta_{j_1}\theta_{j_2}\sum_{i>0, i+k\le 0} P \bigl(X_{\lfloor nt_{j_1}\rfloor}^{\lfloor nt_{j_1}b\rfloor +\lfloor r_{j_1}\sqrt{n}\rfloor}\ge i\bigr) P \bigl(X_{\lfloor nt_{j_2}\rfloor}^{\lfloor nt_{j_2}b\rfloor +\lfloor r_{j_2}\sqrt{n}\rfloor}< i+k\bigr)
\\
&-n^{-1/2}\sum_{1\le j_1, j_2\le N}\theta_{j_1}\theta_{j_2}\sum_{i\le 0, i+k> 0} P \bigl(X_{\lfloor nt_{j_1}\rfloor}^{\lfloor nt_{j_1}b\rfloor +\lfloor r_{j_1}\sqrt{n}\rfloor}<i\bigr) P \bigl(X_{\lfloor nt_{j_2}\rfloor}^{\lfloor nt_{j_2}b\rfloor +\lfloor r_{j_2}\sqrt{n}\rfloor}\ge i+k\bigr)
\\
&+n^{-1/2}\sum_{1\le j_1, j_2\le N}\theta_{j_1}\theta_{j_2}\sum_{i\le 0, i+k\le 0} P \bigl(X_{\lfloor nt_{j_1}\rfloor}^{\lfloor nt_{j_1}b\rfloor +\lfloor r_{j_1}\sqrt{n}\rfloor}<i\bigr) P \bigl(X_{\lfloor nt_{j_2}\rfloor}^{\lfloor nt_{j_2}b\rfloor +\lfloor r_{j_2}\sqrt{n}\rfloor}< i+k\bigr).
\end{aligned}\ee
The middle two terms above   are $O(n^{-1/2})$ and hence  vanish as $n\to\infty$.  

The individual probabilities converge by the central limit theorem:  with $i=\fl{x\sqrt n}$, 
\begin{align*}
 P \bigl(X_{\fl{nt}}^{\fl{ntb} +\fl{r\sqrt{n}}}\ge i\bigr)
&=   P \Biggl(\frac{X_{\fl{nt}}^{0}+ \fl{ntb}}{\sqrt n} \ge \frac{\fl{x\sqrt n} -  \fl{r\sqrt{n}}}{\sqrt n}\Biggr)\\
&\longrightarrow   P (  B_{\sigma_1^2t} \ge x-r)=   P (  B_{\sigma_1^2t} \le r-x).  
\end{align*} 
The first  and the fourth sums in \eqref{a-sum}  are    handled by  Riemann sum arguments, with estimates to control the tails $\abs{i}\ge C\sqrt n$,   to obtain 
\begin{align*}
&\lim_{n\to\infty}n^{-1/2}\sum_{i>0, i+k>0} P \bigl(X_{\lfloor nt_{j_1}\rfloor}^{\lfloor nt_{j_1}b\rfloor +\lfloor r_{j_1}\sqrt{n}\rfloor}\ge i\bigr) P \bigl(X_{\lfloor nt_{j_2}\rfloor}^{\lfloor nt_{j_2}b\rfloor +\lfloor r_{j_2}\sqrt{n}\rfloor}\ge i+k\bigr)\\
&=\int_0^{+\infty} P (B_{\sigma_1^2t_{j_1}}\le r_{j_1}-x) P (B_{\sigma_1^2t_{j_2}}\le r_{j_2}-x)dx 
\end{align*}
and
\begin{align*}
&\lim_{n\to\infty}n^{-1/2}\sum_{i\le 0, i+k\le 0} P \bigl(X_{\lfloor nt_{j_1}\rfloor}^{\lfloor nt_{j_1}b\rfloor +\lfloor r_{j_1}\sqrt{n}\rfloor}<i\bigr) P \bigl(X_{\lfloor nt_{j_2}\rfloor}^{\lfloor nt_{j_2}b\rfloor +\lfloor r_{j_2}\sqrt{n}\rfloor}< i+k\bigr)\\
&=\int_{-\infty}^0 P (B_{\sigma_1^2t_{j_1}}>r_{j_1}-x) P (B_{\sigma_1^2t_{j_2}}>r_{j_1}-x)dx.
\end{align*}
More details can be found in \cite{zhai-phd}.  
By \eqref{2-r7}, this gives the conclusion.  
\end{proof}


We   let $n\to \infty$ in \eqref{slim-9}, with bounds \eqref{slim-10} and \eqref{a-bd}  and with limit \eqref{a-sum-limit} apply the dominated convergence theorem, and recall the definition \eqref{eta-9}  of   $\varrho_0$,  to conclude that
\be \lim_{n\to\infty} \bar{\sigma}_n^2=\varrho_0\sum_{1\le j_1, j_2\le N}\theta_{j_1}\theta_{j_2}\Gamma_2\left((t_{j_1},r_{j_1}),(t_{j_2},r_{j_2})\right).\label{var-limit}\ee

If the limit on the right of \eqref{var-limit} vanishes then   $\sum_{j=1}^N\theta_j \overline{S}_n(t_j,r_j)$ converges weakly  
to zero.   We continue by assuming that the  limit on the right of \eqref{var-limit} is strictly positive.  The proof of Proposition \ref{lmm3-8} for case (b) will be completed by the following central limit theorem for linear processes due to  Peligrad and Utev  \cite[Theorem 2.2(c)]{peli-utev-97}:

\begin{theorem}\label{LPCLT}
Let $\{b_{n,i}: -m_n\le i\le m_n, n\in \bZ_+\}$ be  real numbers   that  satisfy 
\be \limsup_{n\to\infty}\sum_{i\in\bZ}b_{n,i}^2<\infty\label{LPCLT-condition1}\ee
and 
\be \lim_{n\to\infty}\max_{i\in\bZ}\abs{b_{n,i}}=0, \label{LPCLT-condition2}\ee
 where $b_{n,i}=0$ for $|i|>m_n$.  
Let  $\{z(i): i\in\bZ\}$ be a centered, strongly mixing and non-degenerate $(\Var[z(0)]>0)$ stationary sequence with strong mixing coefficients $\{\alpha(j)\}$  such that
\be \Var \Bigl[\,\sum_{i=-m_n}^{m_n}b_{n,i}z(i)\Bigr]=1\label{LPCLT-condition3}\ee
and there exists $\delta>0$ so that $E|z(0)|^{2+\delta}<\infty$ and $\sum_{j=0}^\infty (j+1)^{2/\delta}\alpha(j)<\infty$.  
Then, 
\be \sum_{i=-m_n}^{m_n}b_{n,i}z(i)\Rightarrow \cN(0, 1) \quad \mbox{as}\hspace{2mm}n\to\infty.\ee
\end{theorem}

Apply this to  $b_{n,i}=a_{n,i}/\bar{\sigma}_n$ and $z(i)=\eta_0(i)-\mu_0$.
Since the random walk steps are bounded,  $a_{n,i}=0$ for large enough $i$ when $n$ is given.   
 From \eqref{a-sum-limit} and \eqref{var-limit},  
$$\lim_{n\to\infty} \sum_{i\in\bZ}b_{n,i}^2= \varrho_0^{-1}<\infty$$
and condition \eqref{LPCLT-condition1} is satisfied.  
 By  \eqref{mid-2}  
$\abs{a_{n,i}}\le n^{-1/4}\sum_{j=1}^N\abs{\theta_j}$ and thereby 
$$\max_{i\in\bZ}\abs{b_{n,i}}\le \frac{1}{n^{1/4}\bar{\sigma}_n}\sum_{j=1}^N\abs{\theta_j}\to 0 \quad \mbox{ as }n\to\infty, $$
verifying \eqref{LPCLT-condition2}.  
  Theorem \ref{LPCLT} gives 
  $ {\bar{\sigma}_n}^{-1}\sum_{i\in \bZ}a_{n,i}\bigl(\eta_0(i)-\mu_0\bigr)\Rightarrow \cN(0,1)$
which, 
combined   with \eqref{var-limit}, yields 
$$\sum_{j=1}^N\theta_i \overline{S}_n(t_j,r_j)\Rightarrow \cN\Bigl(0\, ,\varrho_0\!\!\sum_{1\le j_1, j_2\le N} \!\!\theta_{j_1}\theta_{j_2}\Gamma_2\left((t_{j_1},r_{j_1}),(t_{j_2},r_{j_2})\right)\Bigr).$$
This concludes the proof of Proposition \ref{lmm3-8} of case (b). 

\smallskip 

Case (c). It remains to consider the case where the initial increment sequence $\eta_0$ is defined by \eqref{(c)}.    
 Let $\ell(n)$ be any increasing sequence   such that $\lim_{n\to\infty}n/\sqrt{\ell(n)}=0$.   Split  \eqref{mid-2b}, with $\mu_0=0$ now,   into two terms: 
\be\label{S-split} \begin{aligned} 
&\sum_{j=1}^N\theta_j \overline{S}_n(t_j,r_j)=\sum_{i\in\bZ} a_{n,i}\eta_0(i) 
=\sum_{k=0}^{\ell(n)-1}\sum_{i,j \in\bZ} a_{n,i}\xi_{-k}(j)[ p^k(i,j)-p^k(i-1,j)] \\
& \qquad  + \sum_{k=\ell(n)}^{\infty}\sum_{i,j \in\bZ} a_{n,i}\xi_{-k}(j)[ p^k(i,j)-p^k(i-1,j)] =T_{n,1}+T_{n,2}. 
\end{aligned}\ee 
In the next lemma we show that the term $T_{n,2}$ is negligible.  
 
\begin{lemma}\label{lmm3-9case2} With $\ddd\lim_{n\to\infty}n/\sqrt{\ell(n)}=0$,  we have 
$\ddd\lim_{n\to\infty}  \E[\,\abs{T_{n,2}}^2\,]\to 0$.  
\end{lemma}
\begin{proof}  Due to the bounded range of the jump kernel $p(0,j)$,  after the first equality sign below  the sums   over 
$i_1$ and  $i_2$ have $O(n)$ terms, and  for a fixed $k$ the sum  over   $j$ is also  finite.  
\begin{align}
 \E[\,\abs{T_{n,2}}^2\,] 
&=\sigma_{\xi}^2\sum_{k=\ell(n)}^\infty\sum_{i_1, i_2\in\bZ} a_{n,i_1}a_{n,i_2}  \sum_{j\in\bZ}( p^k_{i_1,\,j}-p^k_{i_1-1,\,j})(p^k_{i_2,\,j}-p^k_{i_2-1,\,j}) \notag\\
&=\sigma_{\xi}^2\sum_{k=\ell(n)}^\infty\sum_{i_1, i_2\in\bZ}a_{n,i_1}a_{n,i_2}(2q^{k}_{i_2-i_1,0}-q^{k}_{i_2-i_1+1,0}-q^{k}_{i_2-i_1-1,0})\notag\\
&\le \sigma_{\xi}^2\sum_{j\in\bZ}\; \biggl\lvert \sum_{k=\ell(n)}^\infty  (2q^{k}_{j,0}-q^{k}_{j+1,0}-q^{k}_{j-1,0})\biggr\rvert \cdot \biggl\lvert\sum_{i\in\bZ}a_{n,i}a_{n,i+j}\biggr\rvert. \label{T-8}
\end{align}
Bound the middle sum using the characteristic function   $\phi_q(\theta)=\sum_{j\in\bZ}q(0,j)e^{\iota j\theta}$. 
\begin{align*}
&\biggl\lvert \sum_{k=\ell(n)}^\infty  (2q^{k}_{j,0}-q^{k}_{j+1,0}-q^{k}_{j-1,0})\biggr\rvert 
\\ &
= 
 \biggl\lvert \sum_{k=\ell(n)}^\infty \frac{1}{2\pi}\int_{-\pi}^\pi \phi_q^k(\theta)\bigl(2e^{-\iota j\theta}-e^{-\iota (j+1)\theta}-e^{-\iota (j-1)\theta}\bigr)d\theta \biggr\rvert\\
&= \biggl\lvert  \frac{1}{\pi} \int_{-\pi}^\pi \frac{\phi_q^{\ell(n)}(\theta)(1-\cos\theta)}{1-\phi_q(\theta)}e^{-\iota j\theta}d\theta  \biggr\rvert
\le C\int_{-\pi}^{\pi}\phi_q^{\ell(n)}(\theta)\,d\theta=Cq^{\ell(n)}(0,0)\\
&\le \frac{C}{\sqrt{\ell(n)}}. 
\end{align*} 
The ratio  $(1-\cos\theta)/(1-\phi_q(\theta))$  is bounded  over $[-\pi,\pi]$ because  it has  a finite limit   at $\theta=0$ and by the span 1 property $\phi_q(\theta)=1$ only at $\theta=0$.  



The sum over $j$ on line \eqref{T-8} has $O(n)$ nonzero terms.  Recalling \eqref{a-bd}, we conclude that  $ \E[\,\abs{T_{n,2}}^2\,] \le C{n}/{\sqrt{\ell(n)}}\to 0$ as $n\to\infty$. 
\end{proof}

Let  $U_{n,k}(j)=\xi_{-k}(j)\sum_{i\in\bZ}a_{n,i}(p^k_{i,j}-p^k_{i-1,j})$ so that   $T_{n,1}=\sum_{k=0}^{\ell(n)-1}\sum_{j \in\bZ}U_{n,k}(j)$ is a finite sum of independent mean zero random variables. 
To prove the goal \eqref{S-goal},   the Lindeberg-Feller theorem   shows that 
$T_{n,1} \Rightarrow \sum_{j=1}^N\theta_j S(t_j,r_j) $.  First the limiting variance. 
\be\label{T-15}\begin{aligned}
&\E[T_{n,1}^2] =\sigma_{\xi}^2\sum_{j\in\bZ}\sum_{k=0}^{\ell(n)-1}  \bigl[2q^{k}(j,0)-q^{k}(j+1,0)-q^{k}(j-1,0)\bigr] \sum_{i\in\bZ}a_{n,i}a_{n,i+j}\\
&=\sigma_{\xi}^2\sum_{j\in\bZ}\left[a(j-1)+a(j+1)-2a(j)\right]\sum_{i\in\bZ}a_{n,i}a_{n,i+j}
\; - \; \E[T_{n,2}^2] \\
&\underset{n\to\infty}\longrightarrow  \ \ 
 \frac{\sigma_{\xi}^2}{\sigma_1^2}\sum_{1\le j_1, \, j_2\le N}\theta_{j_1}\theta_{j_2}\Gamma_2\left((t_{j_1},r_{j_1}),(t_{j_2},r_{j_2})\right).
\end{aligned}\ee
The last limit   came from a combination of \eqref{cov-sum} 
and Lemmas \ref{a-lm3} and \ref{lmm3-9case2} .

For  the Lindeberg condition (condition (ii) of Theorem 3.4.5 on p.~129 in \cite{durr})  
the task is to show that, for all $\e>0$, 
\be\label{LF19}  \sum_{j\in\bZ}\sum_{k=0}^{\ell(n)-1}\bE\bigl[U^2_{n,k}(j)\ind\{\left|U_{n,k}(j)\right|\ge\e\}\bigr] \to 0 . 
\ee
By Schwarz and Markov inequalities, 
\begin{align}
&\sum_{j\in\bZ}\sum_{k=0}^{\ell(n)-1}\bE\bigl[U^2_{n,k}(j)\ind\{\left|U_{n,k}(j)\right|\ge\e\}\bigr]
\ \le \  \frac{1}{\e^2}\sum_{j\in\bZ}\sum_{k=0}^{\ell(n)-1}\bE\bigl[U^4_{n,k}(j)\bigr] \nn\\
&=\frac{\bE\bigl[\xi^4_{0}(0)\bigr]}{\e^2}\sum_{j\in\bZ}\sum_{k=0}^{\ell(n)-1}\Bigl(\,\sum_{i\in\bZ}a_{n,i} [ p^k(i,j)-p^k(i-1,j)]\Bigr)^4. \label{line-T6}
\end{align}
To two powers of the quantity in parentheses apply the inequality  
$$\sum_{i\in\bZ}\,\abs{a_{n,i}}\,[ p^k(i,j)+p^k(i-1,j) ]  \le Cn^{-1/4}.$$ 
Then line \eqref{line-T6} is bounded above by 
\begin{align}
   \frac{C}{\e^2n^{1/2}}\sum_{j\in\bZ}\sum_{k=0}^{\ell(n)-1}\Bigl(\,\sum_{i\in\bZ}a_{n,i} [ p^k(i,j)-p^k(i-1,j)]\Bigr)^2  \label{case2-eq5}
\end{align}
which vanishes as $n\to\infty$ because the finite limit of the double sum of squares is exactly what was treated in \eqref{T-15}.   This verifies \eqref{LF19}.  
Collecting the pieces, we have  showed that  $T_{n,1} \Rightarrow \sum_{j=1}^N\theta_j S(t_j,r_j) $.  This,  together with  \eqref{S-split} and Lemma \ref{lmm3-9case2},   implies the goal \eqref{S-goal}. Proposition \ref{lmm3-8} has now been proved also for case (c) of Theorem  \ref{thm2-4}.  
\end{proof}


To complete the proof of Theorem \ref{thm2-4}, apply Propositions \ref{lmm3-10} and  \ref{lmm3-8} to the right-hand side of decomposition \eqref{2-7}, and use  the independence of  the processes $\overline{S}_n$ and $\overline{F}_n$.

\section{Height fluctuations: process-level convergence}\label{sec:tight}
 
In this section we prove Theorem \ref{thm2-6}.   To simplify the exposition we take  $Q=[0,1]^2$.
Theorem  2 in 
  Bickel and Wichura  \cite{bick-wich} gives the following  necessary and sufficient condition   for  weak convergence $X_n\Rightarrow X$ of  $D_2$-valued processes. 
\begin{enumerate}
\item[(i)] (Convergence of finite-dimensional distributions.) For all finite sets $\{(t_i,r_i)\}_{i=1}^N\subset [0,1]^2$, we have
$$\bigl(X_n(t_1,r_1),\ldots,X_n(t_N,r_N)\bigr)\Rightarrow \bigl(X(t_1,r_1),\ldots,X(t_N,r_N)\bigr).$$
\item[(ii)] (Tightness.)   $\forall \e>0$, $\lim_{\delta\to 0}\limsup_{n\to\infty} P \{w'_\delta(X_n)\ge \e\}=0$, for the modulus  
$$w'_\delta(x)=\inf_\Delta\max_{G\in\Delta}\sup_{(t,r),(s,q)\in G}|x(t,r)-x(s,q)|, \quad x\in D_2, $$ where the infimum is over partitions  $\Delta$  of $[0,1]^2$ formed by finitely many lines parallel to the coordinate axes and such that any element $G$ of $\Delta$ is a left-closed, right-open rectangle with diameter at least $\delta$. 
\end{enumerate} 

We   proved the finite-dimensional marginal convergence (i)  in Theorem \ref{thm2-4}. For the tightness proof  that follows we check   the sufficient conditions given by the next  lemma    (Proposition 2 in \cite{kuma-08}) in terms of  the modulus of continuity  
\be w_\delta(x)=\sup_{\begin{subarray}{l} (t,r),(s,q)\in [0,1]^2\\ \abs{(t,r)-(s,q)}<\delta\end{subarray}}|x(t,r)-x(s,q)| , \quad x\in D_2.  \label{tight-1}\ee

\begin{lemma}\label{lmm3-11}
Let  $\{X_n\}$ be a sequence of $D_2$-valued processes.  Assume  that  there exists a decreasing sequence $\delta_n\searrow 0$ such that
\begin{enumerate}
\item[(i)]  $\exists$  $\beta>0$, $\kappa>2$, and $C>0$ such that,  for all large enough $n$,
\be E(|X_n(t,r)-X_n(s,q)|^\beta)\le C\abs{(t,r)-(s,q)}^\kappa\label{tight-2}\ee
holds for all $(t,r),(s,q)\in [0,1]^2$ at  distance $\abs{(t,r)-(s,q)}> \delta_n$;\\
\item[(ii)]    $\forall\e,\eta>0$, there exists an $n_0>0$ such that  for all $n\ge n_0$,
\be P\{w_{\delta_n}(X_n)>\e\}< \eta.\label{tight-3}\ee 
\end{enumerate}
Then, for all $\e,\eta>0$, there exist $0<\delta<1$ and integer $n_0<\infty$ such that  
$$P\{w_{\delta}(X_n)\ge\e\}\le\eta\quad \forall n\ge n_0.$$
\end{lemma}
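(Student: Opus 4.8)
The plan is to split the modulus of continuity $w_\delta(X_n)$ at the intermediate scale $\delta_n$: oscillations over increments longer than $\delta_n$ will be controlled by a Kolmogorov--Chentsov-type dyadic chaining estimate fed by the moment bound \eqref{tight-2}, while oscillations at scales $\le\delta_n$ are exactly what \eqref{tight-3} bounds. First, for $n$ so large that $2\sqrt2\,\delta_n<\delta$, I would introduce the dyadic grid $G_n\subset[0,1]^2$ of mesh $h_n=2^{-m(n)}\in(\delta_n,2\delta_n]$; then any two distinct points of $G_n$ lie at distance $>\delta_n$, so \eqref{tight-2} applies to every increment of $X_n$ over a pair of grid points. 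Sending each point of $[0,1]^2$ to a nearest point of $G_n$ and invoking the elementary inequality $w_{c\delta}(x)\le\lceil c\rceil\,w_\delta(x)$ twice (once at each endpoint of an increment), one gets a dimensional constant $C_0$ with
\[
w_\delta(X_n)\ \le\ C_0\,w_{\delta_n}(X_n)\ +\ \Omega_n,\qquad
\Omega_n:=\max\bigl\{\,\abs{X_n(u)-X_n(v)}:u,v\in G_n,\ \abs{u-v}<2\delta\,\bigr\},
\]
whence $P\{w_\delta(X_n)\ge\e\}\le P\{w_{\delta_n}(X_n)\ge\e/(2C_0)\}+P\{\Omega_n\ge\e/2\}$, and by \eqref{tight-3} the first term is $<\eta/2$ for all $n$ beyond some $n_1$.

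The heart of the argument is a bound on $P\{\Omega_n\ge\e/2\}$ that is uniform in $n$. Since $\kappa>2$ I would fix $\rho\in\bigl(0,(\kappa-2)/\beta\bigr)$, so that $\lambda:=2-\kappa+\rho\beta<0$, let $j_0=j_0(\delta)$ be the largest integer with $2^{-j_0}\ge2\delta$, and for $j_0\le\ell\le m(n)$ work with the dyadic sublattices $L_\ell\subset G_n$ of mesh $2^{-\ell}$. Joining any two grid points at distance $<2\delta$ by the standard nested dyadic chain, every increment appearing in the chain joins two points of $G_n$ at distance of order $2^{-\ell}$ at scale $\ell$ (hence exceeding $\delta_n$, since $2^{-(\ell+1)}\ge h_n>\delta_n$ for $\ell\le m(n)-1$); by \eqref{tight-2} and Markov's inequality, the probability that the largest scale-$2^{-\ell}$ increment exceeds the threshold $c\,\e\,2^{-\ell\rho}$ is at most $N_\ell\cdot C(2^{-\ell})^\kappa/(c\,\e\,2^{-\ell\rho})^\beta\le C'(c\e)^{-\beta}2^{\ell\lambda}$, where $N_\ell=O(2^{2\ell})$ counts the pairs. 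Choosing $c$ so that $\sum_{\ell\ge j_0}c\,2^{-\ell\rho}\le\tfrac12$ and intersecting the complementary events (together with the $O(1)$ pairs at the coarsest scale $2^{-j_0}\asymp\delta$, whose increments are controlled the same way), I would conclude $\Omega_n\le\e/2$ off an event of probability at most $C'(c\e)^{-\beta}\sum_{\ell\ge j_0}2^{\ell\lambda}\le C''\,\e^{-\beta}\,\delta^{\,\kappa-2-\rho\beta}$, the geometric series converging because $\lambda<0$ --- and, crucially, bounded independently of the upper index $m(n)$, hence uniformly in $n$.

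To finish, since $\kappa-2-\rho\beta>0$ I would choose $\delta\in(0,1)$ so small that $C''\e^{-\beta}\delta^{\kappa-2-\rho\beta}<\eta/2$, and then enlarge $n_1$ to $n_0$ so that $n\ge n_0$ also forces $2\sqrt2\,\delta_n<\delta$ and the validity of \eqref{tight-2}; adding the two estimates gives $P\{w_\delta(X_n)\ge\e\}<\eta$ for all $n\ge n_0$, which is the claim. I expect the main obstacle to be the chaining step, specifically (a) organizing the dyadic grids so that \emph{every} increment entering the chains is over a pair of points of $G_n$, so that the separation exceeds $\delta_n$ and \eqref{tight-2} is actually applicable, and (b) verifying that the resulting bound on $P\{\Omega_n\ge\e/2\}$ is genuinely uniform in $n$ (which is why the geometric sum must close up thanks to $\lambda<0$, rather than merely being finite for each $n$). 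The remaining steps are routine bookkeeping with the two moduli $w_\delta$, $w_{\delta_n}$ and with the scaling inequality $w_{c\delta}\le\lceil c\rceil w_\delta$.
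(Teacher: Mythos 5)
The paper does not prove this lemma at all: it is quoted verbatim as Proposition~2 of Kumar \cite{kuma-08}, so there is no in-paper argument to compare against. Your proof is a correct, self-contained reconstruction of the standard two-scale argument behind that cited result: the decomposition $w_\delta(X_n)\le C_0\,w_{\delta_n}(X_n)+\Omega_n$ via projection onto a grid of mesh comparable to $\delta_n$ is exactly what makes hypothesis (ii) usable for the sub-$\delta_n$ oscillation, and the dyadic chaining over the levels between $\delta$ and $\delta_n$ is the right way to exploit (i), since every increment in the chain joins two distinct points of $G_n$ and hence lies at distance $>\delta_n$ where \eqref{tight-2} applies. The two points you flag as the crux are indeed the crux, and you handle both: the grid discipline makes \eqref{tight-2} applicable to every chain link, and the bound $C''\e^{-\beta}\delta^{\kappa-2-\rho\beta}$ is uniform in $n$ because the geometric series $\sum_{\ell\ge j_0}2^{\ell\lambda}$ with $\lambda<0$ is dominated by its first term regardless of the upper limit $m(n)$. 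Two cosmetic inaccuracies that do not affect the argument: the coarsest-scale pairs number $O(2^{2j_0})=O(\delta^{-2})$ in the union bound (not $O(1)$ globally), but their contribution is just the $\ell=j_0$ term of the same series; and the constant $c$ must be chosen from $\sum_{\ell\ge 0}c\,2^{-\ell\rho}\le 1/(1-2^{-\rho})\cdot c$, which is independent of $j_0$ and $\delta$ as required. Note also that no continuity of $X_n$ is needed anywhere, since $\Omega_n$ is a maximum over a finite grid — which is precisely why this formulation, rather than plain Kolmogorov--Chentsov, is the right tool for $D_2$-valued processes.
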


The two tightness conditions (i) and (ii)  are checked in Lemmas \ref{lmm3-12} and \ref{lmm3-14}.
 
\begin{lemma}\label{lmm3-12}  Assume the assumptions of Theorem \ref{thm2-6}. 
Fix  $\kappa$ and $\gamma$ such that  $2<\kappa<3$ and $0<\gamma\le \frac{3}{\kappa}$ and let $\delta_n=n^{-\gamma}$.
Then, there exists a constant $C>0$ such that  for all sufficiently large $n$, 
\be\label{Y-98}  \mE\bigl(|\fluc_n(t,r)-\fluc_n(s,q)|^{12}\bigr)\le C\abs{(t,r)-(s,q)}^\kappa\ee
for all $t,s,r,q\in [0,1]$ with $\abs{(t,r)-(s,q)}>n^{-\gamma}$.  
\end{lemma}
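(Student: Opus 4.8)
### Proof plan

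The plan is to use the decomposition \eqref{2-7}, $\fluc_n=\mu_0\overline H_n+\overline F_n+\overline S_n$, and bound the twelfth moment of each increment separately, since $\mE|a+b+c|^{12}\le C(\mE|a|^{12}+\mE|b|^{12}+\mE|c|^{12})$. The $\overline H_n$ term is $O(n^{-1/4})$ uniformly and hence contributes a term that is $\le Cn^{-3}\le C\abs{(t,r)-(s,q)}^\kappa$ on the range $\abs{(t,r)-(s,q)}>n^{-\gamma}$ as soon as $\kappa\gamma\le 3$, so it is harmless. The work is in the two genuine fluctuation terms. For both, I would first reduce the two-dimensional increment $\fluc_n(t,r)-\fluc_n(s,q)$ to a sum of a ``pure time'' increment and a ``pure space'' increment, i.e.\ via $\fluc_n(t,r)-\fluc_n(s,q)=[\fluc_n(t,r)-\fluc_n(s,r)]+[\fluc_n(s,r)-\fluc_n(s,q)]$, and handle each; the Euclidean distance $\abs{(t,r)-(s,q)}$ dominates both $\abs{t-s}$ and $\abs{r-q}$, so it suffices to get bounds of the form $C\abs{t-s}^{\kappa/2}$ and $C\abs{r-q}^{\kappa/2}$ after a further application of the triangle inequality, or more simply bounds of the form $C(\abs{t-s}^{a}+\abs{r-q}^{a})$ with $a\ge\kappa$ once one remembers $\abs{t-s},\abs{r-q}\le\abs{(t,r)-(s,q)}$.

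For $\overline F_n$, which depends only on $\xi$, write the increment $\overline F_n(t,r)-\overline F_n(s,q)$ as $n^{-1/4}\sum_k\sum_x\xi_k(x)\,g_{n,k}(x)$ where $g_{n,k}(x)$ is the appropriate difference of random-walk transition probabilities (with the convention that for $k>\fl{nt}$ the corresponding term is absent). Since the $\xi_k(x)$ are i.i.d.\ mean zero with finite twelfth moment, a Rosenthal/Burkholder-type inequality gives $\mE|\overline F_n(t,r)-\overline F_n(s,q)|^{12}\le C n^{-3}\big[(\sum_{k,x}g_{n,k}(x)^2)^6+\sum_{k,x}g_{n,k}(x)^{12}\big]$. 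The second term is lower order. The main estimate is therefore $n^{-1/2}\sum_{k,x}g_{n,k}(x)^2\le C\abs{(t,r)-(s,q)}^{\kappa/6}$ (so that its sixth power times $n^{-3}$ is $\le C\abs{\cdot}^\kappa$). This $L^2$ bound is exactly the kind of Gaussian computation already performed in the proof of Proposition \ref{lmm3-10}: the time increment contributes $n^{-1/2}\sum_{k}q^{k}(0,0)$-type sums restricted to $k$ between $\fl{ns}$ and $\fl{nt}$, which by Lemma \ref{crllyB-1} scales like $\abs{t-s}^{1/2}$, and the space increment contributes a sum of $n^{-1/2}\sum_k[q^k(0,0)-q^k(x_n,0)]$-type terms with $x_n\approx(r-q)\sqrt n$, which scales like $\abs{r-q}^{1/2}$; here one uses the local CLT and the potential-kernel asymptotics \eqref{l38a} together with uniform bounds $q^k(0,y)\le Ck^{-1/2}$ to control error terms. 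Choosing $\kappa<3$ makes $\kappa/6<1/2$, so these $1/2$-Hölder-type bounds comfortably suffice.

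For $\overline S_n$, which depends only on $\eta_0$, write $\overline S_n(t,r)-\overline S_n(s,q)=\sum_{i\in\Z}c_{n,i}(\eta_0(i)-\mu_0)$ with explicit coefficients $c_{n,i}$ built from the differences of $P(X^{y(n)}_{\fl{nt}}\ge i)$ across the two space-time points; here one must be slightly careful with the indicator split at $i=0$, but the $\ind_{\{i>0\}}$ vs.\ $\ind_{\{i\le0\}}$ bookkeeping is the same as in \eqref{mid-2}. Then I would invoke a Rosenthal-type inequality for strongly mixing sequences (in case (b)) — this is where the stronger moment assumption $\mE|\eta_0(0)|^{12+\delta}<\infty$ and the stronger mixing-rate hypothesis \eqref{eta-a77} are used, the exponent $10+132/\delta$ being tuned precisely so that the twelfth-order Rosenthal bound goes through — to reduce to controlling $\sum_i c_{n,i}^2$ and (an absolutely convergent sum of) the covariance-weighted quantities $\sum_i c_{n,i}c_{n,i+\ell}$, exactly the objects appearing in Lemma \ref{a-lm3} and its proof. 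In case (c) one instead expands $\eta_0$ via \eqref{(c)} as a linear functional of i.i.d.\ $\xi$'s and reuses the $\overline F_n$-type Rosenthal estimate. The scaling input is again that $n^{-1/2}\sum_i c_{n,i}^2$ and its shifted analogues are $\le C\abs{(t,r)-(s,q)}^{\kappa/6}$, proved by the same Riemann-sum/CLT arguments as in Lemma \ref{a-lm3} but now extracting the Hölder gain in $\abs{t-s}$ and $\abs{r-q}$ rather than just a finite limit.

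The main obstacle I anticipate is precisely this last point: upgrading the ``convergence to a finite limit'' estimates of Propositions \ref{lmm3-10}–\ref{lmm3-8} and Lemma \ref{a-lm3} to quantitative Hölder-continuity bounds of order $\abs{(t,r)-(s,q)}^{\kappa/6}$ that are \emph{uniform in $n$} on the admissible range $\abs{(t,r)-(s,q)}>n^{-\gamma}$. Concretely, one needs that $n^{-1/2}\sum_{k=\fl{ns}}^{\fl{nt}}q^k(0,0)\le C\abs{t-s}^{1/2}$ and $n^{-1/2}\sum_{k=0}^{\fl{nt}}\big(q^k(0,0)-q^k(x_n,0)\big)\le C\abs{r-q}^{1/2}$ with $x_n\sim(r-q)\sqrt n$, where the lower cutoff $\abs{\cdot}>n^{-\gamma}$ guarantees that the sums are long enough for the local-CLT/Gaussian-integral replacement to be valid with a uniform error. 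Assembling these, with $2<\kappa<3$ and $0<\gamma\le 3/\kappa$, yields \eqref{Y-98} and completes the lemma. The constant $C$ and the threshold on ``sufficiently large $n$'' come out of the local CLT error terms and are independent of the points chosen.
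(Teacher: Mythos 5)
Your plan is correct and follows essentially the same route as the paper: the decomposition \eqref{2-7}, a 12th-moment Rosenthal-type bound for the independent noise (and for case (c) via \eqref{(c)}), Rio's moment inequality for the mixing case (b), reduction to second-moment coefficient sums, and the quantitative bounds $q^k(0,0)\le Ck^{-1/2}$ together with the linear growth of the potential kernel $a$, with the cutoff $\abs{(t,r)-(s,q)}>n^{-\gamma}$ absorbing the residual $n^{-3}$. The only cosmetic differences are that the paper treats the joint space-time increment directly via the Markov property rather than splitting into pure time and pure space pieces, and phrases the $\overline S_n$ reduction through $\sum_m[P(A_{1,m})+P(A_{2,m})]$ rather than $\sum_i c_{n,i}^2$; also note the spatial term actually scales linearly in $\abs{r-q}$ (from $a(x)\sim\abs{x}/2\sigma_1^2$), which only strengthens your bound.
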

\begin{proof}

From decomposition \eqref{2-7},  for a constant $C$, 
\be \begin{aligned}\label{tight-4}
& C^{-1}  \mE\left(|\fluc_n(t,r)-\fluc_n(s,q)|^{12}\right)  \; \le \;    |\overline{H}_n(t,r)-\overline{H}_n(s,q)|^{12}   \\
& \qquad
+ \bE\left(|\overline{F}_n(t,r)-\overline{F}_n(s,q)|^{12}\right)
 + \mE\left(|\overline{S}_n(t,r)-\overline{S}_n(s,q)|^{12}\right) .
\end{aligned}\ee
Estimate \eqref{Y-98}  comes by treating each term on the right-hand side of \eqref{tight-4} in turn. 
 
 \smallskip 
 
As observed in the early part of Section \ref{sec:fd}, $\overline{H}_n(t,r)=O(n^{-1/4})$ uniformly, and  the   first term on the right of \eqref{tight-4} satisfies 
\be\label{Y-101} 
 |\overline{H}_n(t,r)-\overline{H}_n(s,q)|^{12}  \le Cn^{-3}\le Cn^{-\gamma\kappa}  \le C\abs{(t,r)-(s,q)}^\kappa . 
\ee



The following argument will be used more than once:  if $\{\zeta_i\}$ are i.i.d.\ mean zero variables with a finite 12th moment and $\abs{a_i}\vee 1\le B$, then 
\be\begin{aligned}
&E\biggl[\biggl(\sum_i a_i\zeta_i\biggr)^{12}\,\biggr] 
= E\sum_{i_1,i_2,\dotsc, i_{12}} a_{i_1}\dotsm a_{i_{12}}  \, \zeta_{i_1}\dotsm \zeta_{i_{12}} \\
&\le  C \sum_{\substack{1\le k\le 6\\ m_i\ge 2:\,m_1+\dotsm+m_k=12}}   \Bigl( \,\sum_{i_1} \abs{a_{i_1}}^{m_1}\Bigr) \Bigl( \,\sum_{i_2} \abs{a_{i_2}}^{m_2}\Bigr)  \dotsm \Bigl( \,\sum_{i_k} \abs{a_{i_k}}^{m_k}\Bigr)  \\
&\le B^{10} C\Bigl(1+ \sum_i a_i^2\Bigr)^6   
\end{aligned}\label{t:help7}\ee
where $C$ is a combinatorial constant.  


  For the second term on the right of \eqref{tight-4}, recalling \eqref{2-6}, and with $t\ge s$, 
\begin{align*}
&\overline{F}_n(t,r)-\overline{F}_n(s,q)\\
&=n^{-1/4}\sum_{k=1}^{\lfloor ns\rfloor}\sum_{x\in\mathbb{Z}}\xi_k(x)\left[ P \bigl(X_{\lfloor nt\rfloor -k}^{\lfloor ntb\rfloor +\lfloor r\sqrt{n}\rfloor}=x\bigr)- P \bigl(X_{\lfloor ns\rfloor -k}^{\lfloor nsb\rfloor +\lfloor q\sqrt{n}\rfloor}=x\bigr)\right]\\
&\qquad +n^{-1/4}\sum_{k=\lfloor ns\rfloor+1}^{\lfloor nt\rfloor}\sum_{x\in\mathbb{Z}}\xi_k(x) P \bigl(X_{\lfloor nt\rfloor -k}^{\lfloor ntb\rfloor +\lfloor r\sqrt{n}\rfloor}=x\bigr).
\end{align*}
Apply \eqref{t:help7}:  
\begin{align*}
&\bE|\overline{F}_n(t,r)-\overline{F}_n(s,q)|^{12}\\
&\le  Cn^{-3}\biggl\{1+\sum_{k=1}^{\lfloor ns\rfloor}\sum_{x\in\mathbb{Z}}\left[ P \bigl(X_{\lfloor nt\rfloor -k}^{\lfloor ntb\rfloor +\lfloor r\sqrt{n}\rfloor}=x\bigr)- P \bigl(X_{\lfloor ns\rfloor -k}^{\lfloor nsb\rfloor +\lfloor q\sqrt{n}\rfloor}=x\bigr)\right]^2
\\
&\qquad 
+\sum_{k=\lfloor ns\rfloor+1}^{\lfloor nt\rfloor}\sum_{x\in\mathbb{Z}} P \bigl(X_{\lfloor nt\rfloor -k}^{\lfloor ntb\rfloor +\lfloor r\sqrt{n}\rfloor}=x\bigr)^2\biggr\}^6. 
\end{align*}
The sums inside the braces develop as follows.  
\be\label{tight-12}\begin{aligned}
&\sum_{k=1}^{\lfloor ns\rfloor}\sum_{x\in\mathbb{Z}}\left[ P \bigl(X_{\lfloor nt\rfloor -k}^{\lfloor ntb\rfloor +\lfloor r\sqrt{n}\rfloor}=x\bigr)- P \bigl(X_{\lfloor ns\rfloor -k}^{\lfloor nsb\rfloor +\lfloor q\sqrt{n}\rfloor}=x\bigr)\right]^2\\
&\qquad\qquad\qquad\qquad 
+\sum_{k=\lfloor ns\rfloor+1}^{\lfloor nt\rfloor}\sum_{x\in\mathbb{Z}} P \bigl(X_{\lfloor nt\rfloor -k}^{\lfloor ntb\rfloor +\lfloor r\sqrt{n}\rfloor}=x\bigr)^2
\\
&=\sum_{k=1}^{\lfloor nt\rfloor} P \bigl(X_{\lfloor nt\rfloor -k}^{\lfloor ntb\rfloor +\lfloor r\sqrt{n}\rfloor}-\tilde{X}_{\lfloor nt\rfloor -k}^{\lfloor ntb\rfloor +\lfloor r\sqrt{n}\rfloor}=0\bigr)\\
&\qquad\qquad\qquad\qquad 
+\sum_{k=1}^{\lfloor ns\rfloor} P \bigl(X_{\lfloor ns\rfloor -k}^{\lfloor nsb\rfloor +\lfloor q\sqrt{n}\rfloor}-\tilde{X}_{\lfloor ns\rfloor -k}^{\lfloor nsb\rfloor +\lfloor q\sqrt{n}\rfloor}=0\bigr)
\\
&-2\sum_{k=1}^{\lfloor ns\rfloor} P \bigl(X_{\lfloor nt\rfloor -k}^{\lfloor ntb\rfloor +\lfloor r\sqrt{n}\rfloor}-\tilde{X}_{\lfloor ns\rfloor -k}^{\lfloor nsb\rfloor +\lfloor q\sqrt{n}\rfloor}=0\bigr),
\end{aligned}\ee
where $X_\centerdot$ and $\tilde{X}_\centerdot$ are independent random walks with   transition probability $p$.  

Recalling   transition probability $q$ from \eqref{21-d2}, 
$$\sum_{k=1}^{\lfloor nt\rfloor} P \bigl(X_{\lfloor nt\rfloor -k}^{\lfloor ntb\rfloor +\lfloor r\sqrt{n}\rfloor}-\tilde{X}_{\lfloor nt\rfloor -k}^{\lfloor ntb\rfloor +\lfloor r\sqrt{n}\rfloor}=0\bigr)=\sum_{k=1}^{\lfloor nt\rfloor}q^{\lfloor nt\rfloor -k}(0,0)=\sum_{k=0}^{\lfloor nt\rfloor-1}q^k(0,0).$$
Similarly,
$$\sum_{k=1}^{\lfloor ns\rfloor} P \bigl(X_{\lfloor ns\rfloor -k}^{\lfloor nsb\rfloor +\lfloor q\sqrt{n}\rfloor}-\tilde{X}_{\lfloor ns\rfloor -k}^{\lfloor nsb\rfloor +\lfloor q\sqrt{n}\rfloor}=0\bigr)=\sum_{k=0}^{\lfloor ns\rfloor -1}q^k(0,0).$$
And,
\begin{align*}
&\sum_{k=1}^{\lfloor ns\rfloor} P \bigl(X_{\lfloor nt\rfloor -k}^{\lfloor ntb\rfloor +\lfloor r\sqrt{n}\rfloor}-\tilde{X}_{\lfloor ns\rfloor -k}^{\lfloor nsb\rfloor +\lfloor q\sqrt{n}\rfloor}=0\bigr)=\sum_{k=0}^{\lfloor ns\rfloor-1} P \bigl(X_{\lfloor nt\rfloor -\lfloor ns\rfloor+k}^{\lfloor ntb\rfloor +\lfloor r\sqrt{n}\rfloor}-\tilde{X}_{k}^{\lfloor nsb\rfloor +\lfloor q\sqrt{n}\rfloor}=0\bigr)\\
&\qquad=\sum_{k=0}^{\lfloor ns\rfloor-1} E\Bigl[  P \bigl(X_k^{X_{\lfloor nt\rfloor -\lfloor ns\rfloor}^{\lfloor ntb\rfloor +\lfloor r\sqrt{n}\rfloor}}-\tilde{X}_{k}^{\lfloor nsb\rfloor +\lfloor q\sqrt{n}\rfloor}=0\bigr)\Bigr]\\
&\qquad =E\biggl[\; \sum_{k=0}^{\lfloor ns\rfloor-1} q^k\bigl(X_{\lfloor nt\rfloor -\lfloor ns\rfloor}^{\lfloor ntb\rfloor +\lfloor r\sqrt{n}\rfloor}-\lfloor nsb\rfloor -\lfloor q\sqrt{n}\rfloor,0\bigr)\biggr] .
\end{align*}
In sum, we   rewrite  the right-hand side of \eqref{tight-12} as
\begin{align}
&\sum_{k=0}^{\lfloor nt\rfloor-1}q^k(0,0)+\sum_{k=0}^{\lfloor ns\rfloor -1}q^k(0,0)-2E\left\{\sum_{k=0}^{\lfloor ns\rfloor-1} q^k\bigl(X_{\lfloor nt\rfloor -\lfloor ns\rfloor}^{\lfloor ntb\rfloor +\lfloor r\sqrt{n}\rfloor}-\lfloor nsb\rfloor -\lfloor q\sqrt{n}\rfloor,0\bigr)\right\}\notag\\
&=\sum_{k=\lfloor ns\rfloor}^{\lfloor nt\rfloor-1}q^k(0,0)+2E\left\{\sum_{k=0}^{\lfloor ns\rfloor-1}\left[q^k(0,0)-q^k\bigl(X_{\lfloor nt\rfloor -\lfloor ns\rfloor}^{\lfloor ntb\rfloor +\lfloor r\sqrt{n}\rfloor}-\lfloor nsb\rfloor -\lfloor q\sqrt{n}\rfloor,0\bigr)\right]\right\}.\label{tight-13}
\end{align}
The  first term in \eqref{tight-13} is  bounded by
\be\sum_{k=\lfloor ns\rfloor}^{\lfloor nt\rfloor-1}q^k(0,0)\le \sum_{k=\lfloor ns\rfloor}^{\lfloor nt\rfloor-1}\frac{C}{\sqrt{k}}\le\sum_{k=1}^{\lfloor nt\rfloor-\lfloor ns\rfloor}\frac{C}{\sqrt{k}}\le C\left[1+\sqrt{(t-s)n}\right].\label{tight-14}\ee
To bound the  second term in \eqref{tight-13}, observe first that the terms in the potential kernel   \eqref{t37-2-1}  are nonnegative: 
\begin{align*}  q^k(x,0)&=\sum_{y\in\bZ}p^k(x,y)p^k(0,y)\le \tfrac{1}{2}\sum_{y\in\bZ}\bigl[\bigl(p^k(x,y)\bigr)^2+\bigl(p^k(0,y)\bigr)^2\bigr]=\sum_{y\in\bZ}\bigl(p^k(0,y)\bigr)^2\\
&=q^k(0,0).\end{align*} 
Now  the second term in \eqref{tight-13} is bounded by  
\begin{align}
&E\biggl\{\;\sum_{k=0}^{\lfloor ns\rfloor-1}\left[q^k(0,0)-q^k\bigl(X_{\lfloor nt\rfloor -\lfloor ns\rfloor}^{\lfloor ntb\rfloor +\lfloor r\sqrt{n}\rfloor}-\lfloor nsb\rfloor -\lfloor q\sqrt{n}\rfloor,0\bigr)\right]\biggr\}\notag\\
&\le E \Bigl[ a\bigl(X_{\lfloor nt\rfloor -\lfloor ns\rfloor}^{\lfloor ntb\rfloor +\lfloor r\sqrt{n}\rfloor}-\lfloor nsb\rfloor -\lfloor q\sqrt{n}\rfloor\bigr) \Bigr]\nn\\
&\le CE\bigl|X_{\lfloor nt\rfloor -\lfloor ns\rfloor}^{\lfloor ntb\rfloor +\lfloor r\sqrt{n}\rfloor}-\lfloor nsb\rfloor -\lfloor q\sqrt{n}\rfloor\bigr|\notag\\
&\le C\bigl({|t-s|^{1/2}}+|r-q|\bigr)\sqrt{n}+C,\label{tight-15}
\end{align}
where the second inequality is due to  
\be \lim_{x\to \pm\infty} {\abs x}^{-1}{a(x)}=\tfrac{1}{2} {\sigma_1^{-2}}, \label{nl38}\ee 
from  P28.4 on p.~345  of \cite{spitzer} and by the symmetry of $a(x)$.  

Combining \eqref{tight-14} and \eqref{tight-15} gives  a bound for \eqref{tight-12}, and hence the second term on the right of \eqref{tight-4} satisfies 
\be  \bE\left(|\overline{F}_n(t,r)-\overline{F}_n(s,q)|^{12}\right) \le C n^{-3}\left[\left(\sqrt{|t-s|}+|r-q|\right)\sqrt{n}+1\right]^{6}.\label{tight-16}\ee

\smallskip

  For the third term on the right of \eqref{tight-4}, from \eqref{2-5},  
\begin{align*}
&\overline{S}_n(t,r)-\overline{S}_n(s,q)\notag\\
&\quad 
=n^{-1/4}\sum_{i>0}\bigl(\eta_0(i)-\mu_0\bigr)\left[ P \bigl(i\le X_{\lfloor nt\rfloor}^{\lfloor ntb\rfloor +\lfloor r\sqrt{n}\rfloor}\bigr)- P \bigl(i\le X_{\lfloor ns\rfloor}^{\lfloor nsb\rfloor +\lfloor q\sqrt{n}\rfloor}\bigr)\right]\notag\\
&\qquad 
-\; n^{-1/4}\sum_{i\le 0}\bigl(\eta_0(i)-\mu_0\bigr)\left[ P \bigl(i>X_{\lfloor nt\rfloor}^{\lfloor ntb\rfloor +\lfloor r\sqrt{n}\rfloor}\bigr)- P \bigl(i>X_{\lfloor ns\rfloor}^{\lfloor nsb\rfloor +\lfloor q\sqrt{n}\rfloor}\bigr)\right]. 
\end{align*}
Note that $X^k_t\overset{d} =k+X^0_t$ and define    events
\begin{align*}
A_{1,i}&=\left\{X_{\lfloor nt\rfloor}^i\ge -\lfloor ntb\rfloor -\lfloor r\sqrt{n}\rfloor, X_{\lfloor ns\rfloor}^i< -\lfloor nsb\rfloor -\lfloor q\sqrt{n}\rfloor\right\},\\
A_{2,i}&=\left\{X_{\lfloor nt\rfloor}^i< -\lfloor ntb\rfloor -\lfloor r\sqrt{n}\rfloor, X_{\lfloor ns\rfloor}^i\ge -\lfloor nsb\rfloor -\lfloor q\sqrt{n}\rfloor\right\}.
\end{align*}
Then we can rewrite the difference above  as
\be\overline{S}_n(t,r)-\overline{S}_n(s,q)=n^{-1/4}\sum_{i\in \bZ}\bigl(\eta_0(-i)-\mu_0\bigr)\left[ P \bigl(A_{1,i}\bigr)- P \bigl(A_{2,i}\bigr)\right].\label{tight-6}\ee 
We prove an intermediate bound where the different cases (a), (b) and (c) of Theorem \ref{thm2-6} are felt.

\begin{lemma}\label{lm-S-99}   Suppose the initial increment sequence $\{\eta_0(x)\}$ satisfies one of the   assumptions  {\rm (a), (b)} and {\rm(c)} of Theorem \ref{thm2-6}.  Then  we have the inequality 
\be\label{S-99} 
\mE \bigl[\,\bigl(\overline{S}_n(t,r)-\overline{S}_n(s,q)\bigr)^{12}\,\bigr]\le  Cn^{-3}\Bigl\{1+\sum_{m\in\bZ}\left[ P \bigl(A_{1,m}\bigr)+ P \bigl(A_{2,m}\bigr)\right]\Bigr\}^6. 
\ee
\end{lemma} 

\begin{proof}   
Case (a).  The   case of  i.i.d.\ $\{\eta_0(x)\}$ is handled by the argument in \eqref{t:help7}, 
beginning with \eqref{tight-6}. Details  
can be found in \cite{zhai-phd}. 

 \smallskip 
 
 Case (b).  For the case of strongly mixing initial increments we  state a  lemma   from Rio's lectures  (see Theorem 2.2 and the derivation of equation (C.6) in \cite{rio-lect}).
\begin{lemma}\label{lmm3-13} {\rm(a)}  
Let $m\in\bN$, $\{X_i\}_{i\in\bN}$  centered real  random variables with   strong mixing coefficients $\{\alpha(k)\}_{k\ge 0}$ and define 
\[ \alpha^{-1}(u)=\inf\{k\in\bZ_+: \alpha(k)\le u\} =\sum_{i\ge 0}\ind_{\{u< \alpha(i)\}}.\]    Assume $E\abs{X_i}^{2m}<\infty$ and let   $Q_k(u)$ be  the quantile function of $|X_k|$. 
 Let   $S_n=\sum_{k=1}^nX_k$. Then there exist  positive constants $a_m$ and $b_m$ such that 
\be\label{rio-10}\begin{aligned}
E \left(S_n^{2m}\right)&\le  a_m\left(\int_0^1\sum_{k=1}^n[\alpha^{-1}(u)\wedge n]Q_k^2(u)du\right)^m \\
&\qquad +  b_m\sum_{k=1}^n\int_0^1[\alpha^{-1}(u)\wedge n]^{2m-1}Q_k^{2m}(u)du. 
\end{aligned}\ee 

{\rm(b)} 
  Suppose centered  $\{X_i\}_{i\in\bN}$ have finite $r$th moment. Then for $p\in[1,r)$  there exists a constant $c_p>0$ such that
\be \begin{aligned}
&\sum_{k=1}^n\int_0^1[\alpha^{-1}(u)\wedge n]^{p-1}Q_k^{p}(u)du  \\
&\qquad \le \  c_p \left(\sum_{i=0}^n(i+1)^{(pr-2r+p)/(r-p)}\alpha(i)\right)^{1-p/r}\sum_{k=1}^n\left(E|X_k|^r\right)^{p/r}.\label{tight-6c}
\end{aligned}\ee
\end{lemma}

Apply this lemma to  representation \eqref{tight-6}.   Note that 
  $k_n=\#\{i\in\bZ:  P \bigl(A_{1,i}\bigr)- P \bigl(A_{2,i}\bigr)\neq 0\} =O(n)$ due to the bounded support of $p$ \eqref{w-ass}. Letting $Q_i$ be the quantile function of $\bigl|\eta_0(-i)-\mu_0\bigr|\cdot\left| P \bigl(A_{1,i}\bigr)- P \bigl(A_{2,i}\bigr)\right|$ and $m=6$, \eqref{rio-10} gives 
\begin{align*} 
&\mE \bigl[\left(\overline{S}_n(t,r)-\overline{S}_n(s,q)\right)^{12}\bigr]\\
&\le  Cn^{-3}\left[\left(\sum_{i\in\bZ}\int_0^1[\alpha^{-1}(u)\wedge k_n]Q_i^2(u)du\right)^6+\sum_{j\in\bZ}\int_0^1[\alpha^{-1}(u)\wedge k_n]^{11}Q_j^{12}(u)du\right].
\end{align*}

Let $p=2, r=12$ in \eqref{tight-6c}, we can get an upper bound for $\sum_{i\in\bZ}\int_0^1[\alpha^{-1}(u)\wedge k_n]Q_i^2(u)du$,
\begin{align*}
&\sum_{i\in\bZ}\int_0^1[\alpha^{-1}(u)\wedge k_n]Q_i^2(u)du\\
&\le  c_{2} \left(\sum_{i=0}^{k_n}(i+1)^{1/5}\alpha(i)\right)^{5/6}\sum_{j\in\bZ}\left(\mE|\eta_0(-j)-\mu_0\bigr|^{12}\right)^{1/6}\left| P \bigl(A_{1,j}\bigr)- P \bigl(A_{2,j}\bigr)\right|^2\\
&\le  C \left(\sum_{i=0}^{k_n}(i+1)^{1/5}\alpha(i)\right)^{5/6}\sum_{j\in\bZ}\left[ P \bigl(A_{1,j}\bigr)+ P \bigl(A_{2,j}\bigr)\right]\le C \sum_{j\in\bZ}\left[ P \bigl(A_{1,j}\bigr)+ P \bigl(A_{2,j}\bigr)\right].
\end{align*}
By the same token, let $p=12$, $r=12+\delta$ in \eqref{tight-6c}, we can show that
\begin{align*}
&\sum_{j\in\bZ}\int_0^1[\alpha^{-1}(u)\wedge k_n]^{11}Q_j^{12}(u)du\\
&\le  C \left(\sum_{i=0}^{k_n}(i+1)^{10+132/\delta}\alpha(i)\right)^{\delta/(12+\delta)}\sum_{j\in\bZ}\left[ P \bigl(A_{1,j}\bigr)+ P \bigl(A_{2,j}\bigr)\right]\\
&\le C \sum_{j\in\bZ}\left[ P \bigl(A_{1,j}\bigr)+ P \bigl(A_{2,j}\bigr)\right].
\end{align*}  
Hence, 
\be\label{tight-6b}\begin{aligned}
&\mE \bigl[\left(\overline{S}_n(t,r)-\overline{S}_n(s,q)\right)^{12}\bigr]\\
&\le   Cn^{-3}\biggl\{\Bigl(\sum_{j\in\bZ}\left[ P (A_{1,j})+ P (A_{2,j})\right]\Bigr)^6+\sum_{j\in\bZ}\left[ P (A_{1,j})+ P (A_{2,j})\right]\biggr\}\\
&\le  Cn^{-3}\Bigl\{1+\sum_{m\in\bZ}\left[ P (A_{1,m})+ P (A_{2,m})\right]\Bigr\}^6. 
\end{aligned}\ee


Case (c).   From \eqref{tight-6} and \eqref{(c)},  
$$\overline{S}_n(t,r)-\overline{S}_n(s,q)=n^{-1/4}\sum_{i,\,j\in\bZ}\sum_{k=0}^{\infty}\xi_{-k}(j) (p^k_{0,\,j+i}-p^k_{0,\,j+i+1})\left[ P (A_{1,i})- P (A_{2,i})\right].$$ 
Then by \eqref{t:help7} 
\be\label{S-106}\begin{aligned}
&\mE \bigl[\left(\overline{S}_n(t,r)-\overline{S}_n(s,q)\right)^{12}\bigr]\\
&\le  C n^{-3} \biggl(  1+  \sum_{j\in\bZ}\sum_{k=0}^{\infty}\, 
\biggl\lvert \sum_{i\in\bZ}\  (p^k_{0,\,j+i}-p^k_{0,\,j+i+1})\bigl[ P (A_{1,i})- P (A_{2,i})\bigr]\,\biggr\rvert^2 \; 
\biggr)^6 
\end{aligned}\ee 
Expand the sum of squares inside the parentheses: 
\begin{align}
 &\sum_{j\in\bZ}\sum_{k=0}^{\infty}\sum_{i_1,i_2\in \bZ}(p^k_{0,\,j+i_1}-p^k_{0,\,j+i_1+1})(p^k_{0,\,j+i_2}-p^k_{0,\,j+i_2+1})\notag\\
&\qquad \qquad\qquad
\times \left[ P \bigl(A_{1,i_1}\bigr)- P \bigl(A_{2,i_1}\bigr)\right]\left[ P \bigl(A_{1,i_2}\bigr)- P \bigl(A_{2,i_2}\bigr)\right]\notag\\
=&\sum_{i,\ell\in\bZ}\left[a(\ell-1)+a(\ell+1)-2a(\ell)\right] \cdot \left[ P \bigl(A_{1,i}\bigr)- P \bigl(A_{2,i}\bigr)\right]\left[ P \bigl(A_{1,i+\ell}\bigr)- P \bigl(A_{2,i+\ell}\bigr)\right]\notag\\
\le & \sum_{\ell\in\bZ}\left[a(\ell-1)+a(\ell+1)-2a(\ell)\right] \notag\\
&\qquad\qquad\qquad \times \; \sum_{i\in\bZ}\frac{1}{2}\left\{\left[ P \bigl(A_{1,i}\bigr)- P \bigl(A_{2,i}\bigr)\right]^2+\left[ P \bigl(A_{1,i+\ell}\bigr)- P \bigl(A_{2,i+\ell}\bigr)\right]^2\right\}\notag\\
= & \frac{1}{\sigma_1^2}\sum_{i\in\bZ}\left[ P \bigl(A_{1,i}\bigr)- P \bigl(A_{2,i}\bigr)\right]^2
\le \frac{1}{\sigma_1^2}\sum_{i\in\bZ}\left[ P \bigl(A_{1,i}\bigr)+ P \bigl(A_{2,i}\bigr)\right].\notag
\end{align}
where  the last equality is from \eqref{cov-sum}. 
 Inserting this into \eqref{S-106} gives  
\begin{align*}
&\mE \bigl[\left(\overline{S}_n(t,r)-\overline{S}_n(s,q)\right)^{12}\bigr]\le Cn^{-3}\Bigl\{1+\sum_{i\in\bZ}\left[ P (A_{1,i})+ P (A_{2,i})\right]\Bigr\}^6.
\end{align*}
The proof of Lemma \ref{lm-S-99} is complete. 
\end{proof} 

 We continue by bounding  the sum on the right of \eqref{S-99}.  Suppose $t\ge s$. 
\begin{align}
\sum_{m\in\bZ} P \bigl(A_{1,m}\bigr)&=\sum_{m\in\bZ} P \bigl(X_{\lfloor nt\rfloor}^0\ge -\lfloor ntb\rfloor -\lfloor r\sqrt{n}\rfloor-m, X_{\lfloor ns\rfloor}^0< -\lfloor nsb\rfloor -\lfloor q\sqrt{n}\rfloor-m\bigr)\notag\\
&=\sum_{m\in\bZ}\sum_{\ell>m} P \bigl(X_{\lfloor ns\rfloor}^0=-\lfloor nsb\rfloor -\lfloor q\sqrt{n}\rfloor-\ell\bigr)\notag\\
\times& \  P \bigl(X_{\lfloor nt\rfloor-\lfloor ns\rfloor}^0\ge \lfloor nsb\rfloor-\lfloor ntb\rfloor +\lfloor q\sqrt{n}\rfloor-\lfloor r\sqrt{n}\rfloor-m+\ell\bigr)\notag\\
\overset{k=\ell-m}{=}&\sum_{m\in\bZ}\sum_{k>0} P \bigl(X_{\lfloor ns\rfloor}^0=-\lfloor nsb\rfloor -\lfloor q\sqrt{n}\rfloor-k-m\bigr)\notag\\
\times&\  P \bigl(X_{\lfloor nt\rfloor-\lfloor ns\rfloor}^0\ge \lfloor nsb\rfloor-\lfloor ntb\rfloor +\lfloor q\sqrt{n}\rfloor-\lfloor r\sqrt{n}\rfloor+k\bigr)\notag\\
&=\sum_{k>0} P \bigl(X_{\lfloor nt\rfloor-\lfloor ns\rfloor}^0\ge \lfloor nsb\rfloor-\lfloor ntb\rfloor +\lfloor q\sqrt{n}\rfloor-\lfloor r\sqrt{n}\rfloor+k\bigr)\label{tight-7}
\end{align} 
Similarly,  
\be\sum_{m\in\bZ} P \bigl(A_{2,m}\bigr)=\sum_{k\le 0} P \bigl(X_{\lfloor nt\rfloor-\lfloor ns\rfloor}^0< \lfloor nsb\rfloor-\lfloor ntb\rfloor +\lfloor q\sqrt{n}\rfloor-\lfloor r\sqrt{n}\rfloor+k\bigr).\label{tight-8}\ee
Combining \eqref{tight-7} and \eqref{tight-8}, 
\be\label{tight-10}\begin{aligned}
&\sum_{m\in\bZ}\left[ P \bigl(A_{1,m}\bigr)+ P \bigl(A_{2,m}\bigr)\right]\\
&=\sum_{k>0} P \bigl(X_{\lfloor nt\rfloor-\lfloor ns\rfloor}^0-\lfloor nsb\rfloor+\lfloor ntb\rfloor -\lfloor q\sqrt{n}\rfloor+\lfloor r\sqrt{n}\rfloor\ge k\bigr)\\
&\qquad +\sum_{k< 0} P \bigl(X_{\lfloor nt\rfloor-\lfloor ns\rfloor}^0-\lfloor nsb\rfloor+\lfloor ntb\rfloor -\lfloor q\sqrt{n}\rfloor+\lfloor r\sqrt{n}\rfloor\le k\bigr)\\
&=\sum_{k>0} P \bigl(\bigl|X_{\lfloor nt\rfloor-\lfloor ns\rfloor}^0-\lfloor nsb\rfloor+\lfloor ntb\rfloor -\lfloor q\sqrt{n}\rfloor+\lfloor r\sqrt{n}\rfloor\bigr|\ge k\bigr)\\
&\le  E\bigl| X_{\lfloor nt\rfloor-\lfloor ns\rfloor}^0-\lfloor nsb\rfloor+\lfloor ntb\rfloor\bigr|+|r-q|\sqrt{n}+1\\
&\le  C \bigl(\sqrt{(t-s)n}+|r-q|\sqrt{n}+1\bigr).
\end{aligned}\ee
Combining \eqref{S-99} and \eqref{tight-10} gives this  bound for the third term in \eqref{tight-4}:
\be \mE\left(|\overline{S}_n(t,r)-\overline{S}_n(s,q)|^{12}\right) \le Cn^{-3}\bigl[\bigl(\sqrt{|t-s|}+|r-q|\bigr)\sqrt{n}+1\bigr]^{6}.\label{tight-11}\ee

 
Return  to   \eqref{tight-4} and apply \eqref{tight-16}, \eqref{tight-11} and \eqref{Y-101}   to conclude  that
\begin{align*}
&\mE\bigl[\,|\fluc_n(t,r)-\fluc_n(s,q)|^{12}\,\bigr]\le  Cn^{-3}\bigl[\bigl(\sqrt{|t-s|}+|r-q|\bigr)\sqrt{n}+1\bigr]^{6}\notag\\[2pt] 
&\qquad \le  C\left(|t-s|^3+|r-q|^6+n^{-3}\right) 
\le C\left(|t-s|^\kappa+|r-q|^\kappa\right). 
\end{align*}
In the last step we used $t,s,r,q\in [0,1]$,  $2<\kappa<3$,    and  
$n^{-3}\le n^{-\gamma\kappa} = \delta_n^\kappa<\abs{(t,r)-(s,q)}^\kappa$.   This completes the proof of Lemma \ref{lmm3-12}.\end{proof}


The second tightness condition is  verified as follows.   

 \begin{lemma}\label{lmm3-14}
Under the conditions of  Lemma \ref{lmm3-12}, for any fixed $1<\gamma<3/2$, for $\forall\e>0$,  
$$\lim_{n\to\infty}\mP\biggl\{ \; \sup_{\begin{subarray}{l} (t,r),(s,q)\in [0,1]^2\\ \abs{(t,r)-(s,q)}<n^{-\gamma}\end{subarray}}|\fluc_n(t,r)-\fluc_n(s,q)|>\e\biggr\} =0.$$
\end{lemma}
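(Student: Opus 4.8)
The plan is to use that, for $\gamma>1$, the scale $n^{-\gamma}$ is finer than the lattice spacing of $\fluc_n$ in both space and time, so that the supremum over the continuum of pairs at distance $<n^{-\gamma}$ collapses to a maximum over polynomially many pairs of neighbouring lattice sites, on each of which the twelfth-moment estimate already obtained in the proof of Lemma \ref{lmm3-12} applies. Only $\gamma>1$ is needed for this lemma; the upper bound $\gamma<3/2$ enters through Lemma \ref{lmm3-12}.

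First recall from the opening of Section \ref{sec:fd} that $\overline{H}_n(t,r)=O(n^{-1/4})$ uniformly, so that term contributes a deterministic $O(n^{-1/4})$ to the modulus and may be dropped. Fix $\gamma>1$ and let $n$ be large enough that $\abs b\,n^{1-\gamma}<1$ and $n^{1/2-\gamma}<1$. For $(t,r),(s,q)\in[0,1]^2$ with $\abs{(t,r)-(s,q)}<n^{-\gamma}$ one checks directly that $\abs{\fl{nt}-\fl{ns}}\le1$ and $\absb{(\fl{r\sqn}+\fl{ntb})-(\fl{q\sqn}+\fl{nsb})}\le 2$, while $\absb{\mu_0(r-q)\sqn}\le\abs{\mu_0}n^{1/2-\gamma}\to0$. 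Writing $k=\fl{nt}$ and $x=\fl{r\sqn}+\fl{ntb}$, expanding $\mu_0 r\sqn=\mu_0 x-\mu_0\fl{kb}+O(1)$ shows $\fluc_n(t,r)=g_n(k,x)+O(n^{-1/4})$ uniformly, where $g_n(k,x)=n^{-1/4}\bigl(h_k(x)-\mu_0 x+\mu_0\fl{kb}\bigr)$ and the error depends only on $\abs{\mu_0}$, $b$ and the range $M$ of $p$. Hence
\be\label{plan-mod}
\sup_{\abs{(t,r)-(s,q)}<n^{-\gamma}}\absb{\fluc_n(t,r)-\fluc_n(s,q)}\ \le\ Cn^{-1/4}+\max\,\absb{g_n(k,x)-g_n(k',x')},
\ee
the maximum ranging over ordered pairs of lattice sites, both in the reachable set $\mathcal{L}_n=\{(\fl{nt},\fl{r\sqn}+\fl{ntb}):(t,r)\in[0,1]^2\}$, with $\abs{k-k'}\le1$ and $\abs{x-x'}\le2$.

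Next, for each such pair choose representatives $(t_0,r_0),(s_0,q_0)\in[0,1]^2$ mapping to $(k,x),(k',x')$ under $(t,r)\mapsto(\fl{nt},\fl{r\sqn}+\fl{ntb})$, taking $r_0,q_0$ of the form (integer)$/\sqn$; one then arranges $\abs{t_0-s_0}\le 3/n$ and $\abs{r_0-q_0}\le C(1+\abs b)/\sqn$, so the bracket $\bigl[(\sqrt{\abs{t_0-s_0}}+\abs{r_0-q_0})\sqn+1\bigr]^{6}$ in the estimates \eqref{tight-16}, \eqref{tight-11} and \eqref{Y-101}—which hold for all arguments in $[0,1]^2$, the distance restriction being used only at the very last step of the proof of Lemma \ref{lmm3-12}—is bounded by a constant. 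Combining with $\fluc_n(t_0,r_0)=g_n(k,x)+O(n^{-1/4})$ and $(a+b)^{12}\le 2^{11}(a^{12}+b^{12})$ gives $\mE\bigl[\,\absb{g_n(k,x)-g_n(k',x')}^{12}\,\bigr]\le Cn^{-3}$ for every admissible pair. Since $\mathcal{L}_n$ has $O(n)$ distinct time indices and $O(\sqn)$ distinct space indices at each time index, $\abs{\mathcal{L}_n}=O(n^{3/2})$, and each site has $O(1)$ neighbours, so there are $O(n^{3/2})$ admissible pairs; Markov's inequality and a union bound then give, for every $\e>0$,
\be
\mP\Bigl\{\max\,\absb{g_n(k,x)-g_n(k',x')}>\tfrac{\e}{2}\Bigr\}\ \le\ O(n^{3/2})\cdot\frac{Cn^{-3}}{(\e/2)^{12}}\ =\ O(n^{-3/2})\ \longrightarrow\ 0 .
\ee
Since the deterministic term $Cn^{-1/4}$ in \eqref{plan-mod} is eventually below $\e/2$, the lemma follows. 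The main obstacle is the bookkeeping in the lattice-reduction step: one must pass from the continuum field $\fluc_n$, which contains the non-lattice linear term $\mu_0 r\sqn$, to a genuine function $g_n$ of the discrete time–space indices with a uniform error, choose consistent continuum representatives for neighbouring reachable sites with both increments of order $1/\sqn$, and verify that the number of such pairs is $o(n^3)$ so that the twelfth-moment union bound closes. Everything else reduces to the estimates already established for Lemma \ref{lmm3-12}, used now at the small scales $\abs{t-s},\abs{r-q}=O(1/\sqn)$.
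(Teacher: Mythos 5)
Your proof is correct and follows essentially the same route as the paper's: both reduce the continuum supremum to a discrete maximum (exploiting that for $\gamma>1$ the field depends on $(t,r)$ only through $\fl{nt}$, $\fl{ntb}$, $\fl{r\sqrt n}$ up to the uniformly negligible $\overline{H}_n$ term, and these indices change by $O(1)$ across pairs at distance $<n^{-\gamma}$), and both then close with the twelfth-moment bounds \eqref{tight-16} and \eqref{tight-11} applied at scales $\abs{t-s}=O(1/n)$, $\abs{r-q}=O(1/\sqrt n)$, Markov's inequality, and a union bound. The only difference is bookkeeping: the paper's union bound runs over the $\fl{n^\gamma}^2$ squares of side $n^{-\gamma}$, on each of which the processes take at most $8$ values, yielding $Cn^{2\gamma-3}\to 0$ (which is where $\gamma<3/2$ enters), whereas yours runs directly over the $O(n^{3/2})$ neighbouring reachable lattice pairs and therefore needs only $\gamma>1$ at this step.
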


\begin{proof}
Define the intervals $I(k)=[(k-1)n^{-\gamma},(k+1)n^{-\gamma}]\cap [0,1]$. For  fixed $\e>0$,  first by a union bound and then by decomposition \eqref{2-7}, 
\begin{align}
&\mP\Biggl\{ \;\sup_{\begin{subarray}{l} (t,r),(s,q)\in [0,1]^2\\ \abs{(t,r)-(s,q)}<n^{-\gamma}\end{subarray}}|\fluc_n(t,r)-\fluc_n(s,q)|>\e\Biggr\} \notag\\
&\le \sum_{k_1=1}^{\lfloor n^\gamma\rfloor}\sum_{k_2=1}^{\lfloor n^\gamma\rfloor}\mP\Bigl\{\,\sup_{t\in I(k_1), r\in I(k_2)}|\mu_0\overline{H}_n(t,r)-\mu_0\overline{H}_n(k_1n^{-\gamma},k_2n^{-\gamma})|\ge\frac{\e}{6}\,\Bigr\}\label{tight-21}\\
&\qquad +\sum_{k_1=1}^{\lfloor n^\gamma\rfloor}\sum_{k_2=1}^{\lfloor n^\gamma\rfloor}\mP\Bigl\{\,\sup_{t\in I(k_1), r\in I(k_2)}|\overline{S}_n(t,r)-\overline{S}_n(k_1n^{-\gamma},k_2n^{-\gamma})|\ge\frac{\e}{6}\,\Bigr\}\label{tight-22}\\
&\qquad +\sum_{k_1=1}^{\lfloor n^\gamma\rfloor}\sum_{k_2=1}^{\lfloor n^\gamma\rfloor}\mP\Bigl\{\,\sup_{t\in I(k_1), r\in I(k_2)}|\overline{F}_n(t,r)-\overline{F}_n(k_1n^{-\gamma},k_2n^{-\gamma})|\ge\frac{\e}{6}\,\Bigr\}. \label{tight-23}
\end{align}

Line  \eqref{tight-21} vanishes for large $n$ because $\overline{H}_n(t,r)=O(n^{-1/4})$. 

For the second    and third sums  \eqref{tight-22}--\eqref{tight-23}, observe  from \eqref{2-5} and \eqref{2-6}  that $\overline{S}_n(t,r)$ and $\overline{F}_n(t,r)$ depend on their argument $(t,r)$ only through $\lfloor nt\rfloor$,  $\lfloor ntb\rfloor$ and $\lfloor r\sqrt{n}\rfloor$.  
For large enough $n$, and any $t\in I(k_1)$, $r\in I(k_2)$, 
\begin{align*}
|nt-k_1n^{1-\gamma}|&=n|t-k_1n^{-\gamma}|\le n^{1-\gamma}<1/2,\\
|ntb-k_1n^{1-\gamma}b|&=n|b|\cdot|t-k_1n^{-\gamma}|\le |b|n^{1-\gamma}<1/2,\\
|r\sqrt{n}-k_2n^{1/2-\gamma}|&=n^{1/2}|r-k_2n^{-\gamma}|\le n^{1/2-\gamma}<1/2.
\end{align*}
Thus  for $t\in I(k_1)$ and $r\in I(k_2)$, each   of $\lfloor nt\rfloor$, $\lfloor ntb\rfloor$ and $\lfloor r\sqrt{n}\rfloor$ can   have at most one jump. For example, $\lfloor nt\rfloor$ can only jump from $\lfloor k_1n^{1-\gamma}\rfloor-1$ to $\lfloor k_1n^{1-\gamma}\rfloor$ or from $\lfloor k_1n^{1-\gamma}\rfloor$ to $\lfloor k_1n^{1-\gamma}\rfloor+1$. As a result, $\overline{S}_n(t,r)$ and $\overline{F}_n(t,r)$ can take only  at most 8 different  values on $I(k_1)\times I(k_2)$. 

Suppose  $\{ \overline{S}_n(t_i,r_i)\}_{1\le i\le \ell}$ with  $\ell\le 8$ and  $(t_i,r_i)\in I(k_1)\times I(k_2)$ captures  the different  values of  $\overline{S}_n(t,r)$   on the square $I(k_1)\times I(k_2)$.   
Then,
\begin{align*}
&\mP\biggl\{ \sup_{t\in I(k_1), r\in I(k_2)}|\overline{S}_n(t,r)-\overline{S}_n(k_1n^{-\gamma},k_2n^{-\gamma})|\ge\frac{\e}{6}\biggr\}\\
&\qquad\le  \sum_{i=1}^\ell\mP\bigl\{  |\overline{S}_n(t_i,r_i)-\overline{S}_n(k_1n^{-\gamma},k_2n^{-\gamma})|\ge  {\e}/{6}\bigr\}
\\
&\qquad \le  C{n^{-3}\e^{-12} }\sum_{i=1}^\ell{\left[\left(\sqrt{|t_i-k_1n^{-\gamma}|}+|r_i-k_2n^{-\gamma}|\right)\sqrt{n}+1\right]^{6}}\\
&\qquad 
\le Cn^{-3}{\bigl[\bigl(n^{-\gamma/2}+n^{-\gamma}\bigr)\sqrt{n}+1\bigr]^{6}}\le Cn^{-3} 
\end{align*}
where the second inequality comes  from Markov's  inequality and   \eqref{tight-11}, and the third one from $\gamma>1$.
 
Therefore, for any $1<\gamma<3/2$,   as $n\to\infty$,  
\begin{align*}
&\sum_{k_1=1}^{\lfloor n^\gamma\rfloor}\sum_{k_2=1}^{\lfloor n^\gamma\rfloor}\mP\Biggl\{\,\sup_{\begin{subarray}{l} t\in I(k_1)\\ r\in I(k_2)\end{subarray}}|\overline{S}_n(t,r)-\overline{S}_n(k_1n^{-\gamma},k_2n^{-\gamma})|\ge\frac{\e}{6}\Biggr\}
\le Cn^{2\gamma-3}\longrightarrow 0.  
\end{align*}

Same reasoning with   inequality \eqref{tight-16}
shows that line \eqref{tight-23} vanishes as $n\to\infty$.   Lemma \ref{lmm3-14} has been proved.  
%
\end{proof}
The proof of Theorem \ref{thm2-6} is complete.

\appendix

\section{An ergodic lemma} 

In this section   $p(x,y)=p(0,y-x)$  
is an arbitrary   random walk kernel  on $\Z^d$ with the property that the smallest subgroup that contains the support $\{x: p(0,x)>0\}$ is $\Z^d$ itself.   

\begin{lemma}\label{lm-p-harm}
Transition probability $p(x,y)$ has no bounded harmonic functions other than constants.    
\end{lemma}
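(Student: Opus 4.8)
The plan is to use a classical coupling (or martingale) argument. Let $v:\Z^d\to\R$ be bounded and harmonic for $p$. First I would observe that if $\{S_n\}_{n\ge 0}$ is the random walk with $S_0=x$ and one-step kernel $p(x,y)$, then harmonicity of $v$ says exactly that $M_n=v(S_n)$ is a bounded martingale. By the martingale convergence theorem $M_n$ converges a.s.\ to a limit, and $v(x)=\mathbb E_x[M_0]=\mathbb E_x[\lim_n M_n]$. So it suffices to show that for any two starting points $x,y\in\Z^d$ the limiting values agree in distribution (equivalently in expectation), since that forces $v(x)=v(y)$.

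To compare the two starting points I would couple two walks $S_n$ (started at $x$) and $S_n'$ (started at $y$) run with independent increments until their difference first hits $0$, after which they move together. The key point is that the \emph{difference walk} $D_n=S_n-S_n'$ is a mean-zero random walk on $\Z^d$ whose increment distribution is the symmetrized kernel $\tilde p(z)=\sum_w p(w)p(w-z)$; its support generates the same subgroup $\Z^d$ that the support of $p$ generates, so $D_n$ is an irreducible, aperiodic, mean-zero random walk on $\Z^d$. Here a dimensional split is needed: in $d\in\{1,2\}$ such a walk is recurrent, hence hits $0$ a.s., so the coupling succeeds and $v(S_n)-v(S_n')\to 0$ a.s., giving $v(x)=v(y)$. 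In $d\ge 3$ the difference walk is transient, so this particular coupling does not close by itself.

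For the general $d\ge 3$ case I would instead invoke the Hewitt--Savage / Choquet--Deny type argument: since $v$ is bounded harmonic, $v(S_n)$ is a bounded martingale, so $v(S_n)$ converges a.s.\ and in $L^1$ to a tail-measurable random variable; by the Hewitt--Savage zero--one law applied to the i.i.d.\ increments (the limit is measurable with respect to the exchangeable $\sigma$-field of the increments) this limit is a.s.\ constant, say equal to $c_x$ under $\mathbb P_x$. Then $v(x)=c_x$, and translation invariance of the increment law together with harmonicity forces $c_x$ to be independent of $x$: $c_x=\sum_y p(x,y)c_y$ and $c_\cdot$ bounded, but a bounded harmonic function that we already know is \emph{constant on each orbit of the walk} — and the walk is irreducible — is globally constant. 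Alternatively, one can simply quote the Choquet--Deny theorem for abelian groups, which states that every bounded harmonic function for a random walk on an abelian group whose support generates the group is constant; but since the paper may prefer a self-contained argument, I would present the martingale/Hewitt--Savage version.

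The main obstacle is the $d\ge 3$ case, where the naive difference-walk coupling fails because of transience; the cleanest fix is the Hewitt--Savage zero--one law (the limit of the bounded martingale $v(S_n)$ is exchangeable-measurable, hence constant), after which irreducibility of $p$ propagates the constant value to all of $\Z^d$. One should also double-check the easy but essential fact that the support of the symmetrized kernel $\tilde p$ generates $\Z^d$ whenever the support of $p$ does, which is immediate since $\tilde p(z)>0$ whenever $z=w-w'$ for $w,w'$ in the support of $p$.
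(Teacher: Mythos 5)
Your proposal is correct, but for $d\ge 3$ it takes a genuinely different route from the paper, and your dimensional split is in fact unnecessary on both sides. The paper handles all dimensions at once by citing a specific coupling (Liggett, p.~69): given $p(u,v)>0$, the two walks are coupled so that at each step they either take the same jump or jumps differing by $\pm(v-u)$, so the \emph{difference} walk is a symmetric random walk confined to the cyclic subgroup generated by $v-u$; being effectively one-dimensional, it is recurrent in every dimension $d$, the coupling closes, and $h(u)=h(v)$. You correctly diagnose that the \emph{independent} coupling fails for $d\ge 3$ (the symmetrized difference walk on $\Z^d$ is transient), but the paper's fix is a smarter coupling rather than a new argument. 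Your substitute --- bounded martingale $v(S_n)$ converges a.s., the limit is measurable with respect to the exchangeable $\sigma$-field of the i.i.d.\ increments, hence a.s.\ equal to a constant $c_x$ under $\P_x$ by Hewitt--Savage, and conditioning on the first step gives $c_x=c_y$ whenever $p(x,y)>0$ --- is a valid, self-contained alternative (essentially the standard proof of the Choquet--Deny theorem), and it works in every dimension, so your separate $d\le 2$ coupling is superfluous. One point to tighten: under the hypothesis in force here (the support of $p$ generates $\Z^d$ as a group) the walk need not be irreducible as a Markov chain --- e.g.\ $d=1$, $p(2)=p(3)=\tfrac12$ has $p^n(0,-1)=0$ for all $n$ --- so "irreducibility" is not the right closing step. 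What you actually need, and what the paper uses explicitly, is connectivity of the graph with an edge between $x$ and $y$ whenever $p(x,y)+p(y,x)>0$; this does follow from the group-generation hypothesis, and since your relation $c_x=c_y$ holds whenever $p(x,y)>0$ and equality is symmetric, the constant propagates along this graph to all of $\Z^d$.
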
 

\begin{proof}  Suppose $h$ is a bounded harmonic  function on $\Z^d$, that is,  $h(x)=\sum_y p(x,y)h(y)$ for all $x\in\Z^d$.  
Suppose $p(u,v)>0$.  Then the coupling described on p.~69 of \cite{ligg-85} works to show that $h(u)=h(v)$.  (This coupling is also spelled out in Section 1.5 of \cite{sepp-exclbook}.)  By    assumption   for any $x,y\in\Z^d$ there exists a path  $x=x_0, x_1, \dotsc, x_m=y$  in $\Z^d$ such that $p(x_i, x_{i+1})+p(x_{i+1}, x_i)>0$ for each $i=0,\dotsc,m-1$, and thereby  $h(x)=h(x_0)=\dotsm=h(x_m)=h(y)$.  
\end{proof}

Let $\cS$  be a Polish space, $\Gamma=\cS^{\bZ^d}$,  and shifts $(\theta_x\zeta)(y)=\zeta(x+y)$ for  $\zeta\in\Gamma$ and 
 $x,y\in\Z^d$.   

\begin{lemma}\label{lm-fourier}   Let $\nu$   be a probability measure on $\Gamma$  that is  invariant and  ergodic under the  shift group  and $f\in L^1(\nu)$ with  finite   mean $E^\nu[f]$.     For $x\in\bZ^d$, $t\in\bZ_+$, and $\zeta\in \Gamma$,  define 
\be\label{def-g1} g_t(x,\zeta)=\sum_{y\in\bZ}p^{t}(x,y)f(\theta_y\zeta) . 
 \ee
 Then $\forall x\in\Z^d$,  $g_t(x,\zeta)\to E^\nu[f]$ in $L^1(\nu)$ as $t\to\infty$.  
\end{lemma}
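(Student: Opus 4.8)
The plan is to reduce to a bounded, mean-zero $f$, then compute $\|g_t(0,\cdot)\|_{L^2(\nu)}^2$ exactly through the spectral (Herglotz) measure of $f$ and let $t\to\infty$. Three harmless reductions come first. Since $g_t(x,\zeta)=g_t(0,\theta_x\zeta)$ and $\theta_x$ preserves $\nu$, it suffices to handle $x=0$. Replacing $f$ by $f-E^\nu[f]$, we may assume $E^\nu[f]=0$ and must prove $g_t(0,\cdot)\to0$ in $L^1(\nu)$. Finally, the triangle inequality together with shift-invariance gives $\|g_t(0,\cdot)\|_{L^1(\nu)}\le\|f\|_{L^1(\nu)}$ for all $t$, so if $f^{(K)}=f\mathbf 1_{\{|f|\le K\}}-E^\nu[f\mathbf 1_{\{|f|\le K\}}]$ then $\limsup_t\|g_t(0,\cdot;f)\|_{L^1(\nu)}\le\|f-f^{(K)}\|_{L^1(\nu)}+\limsup_t\|g_t(0,\cdot;f^{(K)})\|_{L^1(\nu)}\le 2\,E^\nu\big[|f|\mathbf 1_{\{|f|>K\}}\big]+\limsup_t\|g_t(0,\cdot;f^{(K)})\|_{L^1(\nu)}$; letting $K\to\infty$ reduces the lemma to bounded, mean-zero $f$.

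For such $f$ I would bound $\|g_t(0,\cdot)\|_{L^1(\nu)}\le\|g_t(0,\cdot)\|_{L^2(\nu)}$ and compute the square. With the symmetric positive-definite covariance $c(w)=E^\nu[f\cdot(f\circ\theta_w)]$, shift-invariance gives
\[
\|g_t(0,\cdot)\|_{L^2(\nu)}^2=\sum_{y,y'}p^t(0,y)\,p^t(0,y')\,c(y'-y)=\sum_{w\in\Z^d}c(w)\,r^t(w),\qquad r^t(w):=\sum_{y}p^t(0,y)\,p^t(0,y+w),
\]
and $r^t$ is a finitely supported probability vector because $p$ has finite range. Write $c(w)=\int_{[-\pi,\pi]^d}e^{\iota w\cdot\theta}\,\mu_f(d\theta)$ for the finite positive spectral measure $\mu_f$ (Herglotz/Bochner on $\Z^d$), with $\mu_f([-\pi,\pi]^d)=c(0)=E^\nu[f^2]$. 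Substituting, interchanging the finite sum with the integral, and using $\sum_w r^t(w)e^{\iota w\cdot\theta}=|\phi_p(\theta)|^{2t}$ (pairing conjugates, with $\phi_p(\theta)=\sum_z p(0,z)e^{\iota z\cdot\theta}$), one gets the clean identity
\[
\|g_t(0,\cdot)\|_{L^2(\nu)}^2=\int_{[-\pi,\pi]^d}\bigl|\phi_p(\theta)\bigr|^{2t}\,\mu_f(d\theta).
\]
By the aperiodicity of $p$, $|\phi_p(\theta)|<1$ for every $\theta\in[-\pi,\pi]^d\setminus\{0\}$ while $\phi_p(0)=1$, so $|\phi_p(\theta)|^{2t}\downarrow\mathbf 1_{\{\theta=0\}}$ and monotone convergence yields $\|g_t(0,\cdot)\|_{L^2(\nu)}^2\to\mu_f(\{0\})$.

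It remains to show $\mu_f(\{0\})=0$, which is exactly where ergodicity of $\nu$ enters: the atom of $\mu_f$ at the origin equals $\lim_{n\to\infty}\|A_nf\|_{L^2(\nu)}^2$, where $A_nf=|C_n|^{-1}\sum_{w\in C_n}f\circ\theta_w$ are cube averages (standard computation: $\|A_nf\|_{L^2(\nu)}^2=\int G_n\,d\mu_f$ with Fejér-type kernels $G_n\ge0$, $G_n(0)=1$, $|G_n|\le1$, $G_n(\theta)\to0$ for $\theta\ne0$, then dominated convergence). By the $\Z^d$ mean ergodic theorem and ergodicity, $A_nf\to E^\nu[f]=0$ in $L^2(\nu)$, hence $\mu_f(\{0\})=0$; therefore $\|g_t(0,\cdot)\|_{L^2(\nu)}\to0$ and a fortiori $\|g_t(0,\cdot)\|_{L^1(\nu)}\to0$, proving the lemma.

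There is no serious obstacle; the whole content is the exact $L^2$-identity together with its two inputs — aperiodicity of $p$ (so that $|\phi_p|^{2t}$ concentrates on the origin) and ergodicity of $\nu$ (so that the surviving atom vanishes). A softer variant avoids spectral measures: iterating Gamkrelidze's gradient bound \eqref{gram5} gives $\|g_t(0,\cdot)-g_t(0,\theta_z\cdot)\|_{\infty}=\|g_t(0,\cdot)-g_t(z,\cdot)\|_{\infty}\le\|f\|_{\infty}\sum_y|p^t(0,y)-p^t(z,y)|\to0$ for each fixed $z$, so every weak-$L^2$ subsequential limit of the $L^2$-bounded family $\{g_t(0,\cdot)\}$ is shift-invariant, hence the constant $E^\nu[f]=0$ by ergodicity; but upgrading this to strong $L^1$ convergence still seems to require the norm computation above, so the spectral route is the one I would write out.
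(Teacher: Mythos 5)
Your proof is correct, and it shares the paper's central device --- the Herglotz spectral representation of the covariance $w\mapsto E^\nu[f\cdot(f\circ\theta_w)]$ combined with $|\phi_p(\theta)|<1$ off the origin and a truncation to reduce to bounded $f$ --- but the endgame is genuinely different. The paper computes the mixed inner product $E^\nu[g_t g_s]$, deduces that $\{g_t(x,r,\cdot)\}$ is Cauchy in $L^2(\nu)$ (the possible atom of the spectral measure at $0$ is harmless there, since the integrand $|\phi^t-\phi^s|^2$ vanishes at $\theta=0$), and then identifies the limit by observing it is a bounded $p$-harmonic function of $x$, hence constant in $x$ by the coupling Lemma \ref{lm-p-harm}, hence shift-invariant, hence $\nu$-a.s.\ constant by ergodicity. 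You instead compute only the diagonal term, obtain $\|g_t(0,\cdot)\|_{L^2(\nu)}^2\to\mu_f(\{0\})$, and kill the atom via the identity $\mu_f(\{0\})=\lim_n\|A_nf\|_{L^2}^2$ and the $\bZ^d$ mean ergodic theorem. Your route dispenses entirely with the harmonic-function lemma (and with identifying a limit function at all), at the price of invoking the multiparameter von Neumann theorem and the Fej\'er-kernel computation of the atom; the paper's route uses only the bare definition of ergodicity but needs Lemma \ref{lm-p-harm} as an extra input. Your reductions (to $x=0$ via $g_t(x,\zeta)=g_t(0,\theta_x\zeta)$, to mean zero, and to bounded $f$ via the contraction $\|g_t(0,\cdot\,;f)\|_{L^1}\le\|f\|_{L^1}$) are all sound, and the interchange of sum and integral is justified by $|c(w)|\le c(0)$ even without appealing to the finite range of $p$. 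One shared caveat: the claim $|\phi_p(\theta)|<1$ for $\theta\ne0$ in the cube requires the strong aperiodicity of \eqref{w-ass} (not merely that the support generates $\bZ^d$), but this is exactly the hypothesis in force where the lemma is applied, and the paper's own proof relies on the same fact.
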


\begin{proof}  The argument is a Fourier analytic one suggested by the proof on p.~30-31 of \cite{ligg-04}.   
The characteristic function of the jump probability  is  
$$\phi_X(\alpha)=\sum_{y\in\bZ}p(0,y)e^{\iota\alpha\cdot  y}, \quad \alpha\in\bR^d.$$
For $0<r<\infty$  define   truncated functions  $f_r(\zeta)=(-r)\vee(f(\zeta)\wedge r)$ and $g_t(x,r,\zeta)=\sum_{y\in\bZ}p^{t}(x,y)f_r(\theta_y\zeta)$.  
Note that  $g_t(x,r,\zeta)$ is bounded, uniformly over $x$ and $\zeta$. 
We show that, as $t\to\infty$, $g_t(x,r,\zeta)$ converges in $L^2(\nu)$ to a constant.   An $L^1$ approximation via the truncation then implies the result. 
 
The function  $V^{(r)}(x)=E^\nu[f_r(\zeta)f_r(\theta_x\zeta)]$ is nonnegative definite   (i.e.\ $\sum_{x,y}V^{(r)}(x-y)z_x\overline{z_y}\ge 0$ for any choice of finitely many complex numbers $\{z_x\}$). By Herglotz' Theorem (Chapter \RNum{19}.6 in Feller \cite{fellII}), there exists a bounded measure $\gamma$ on $[-\pi, \pi)^d$ such that
$$V^{(r)}(x)=\int e^{-\iota x\cdot \alpha}\gamma(d\alpha), \quad  x\in\bZ^d.$$ 

Let   $X_t$ and $\tilde{X}_t$  be   i.i.d.\ copies of the random walk with transition   $p$.  Compute:
\begin{align*}
&\int g_t(x,r,\zeta)g_s(x,r,\zeta)\nu(d\zeta)= \int  E ^{x}[f_r(\theta_{X_t}\zeta)] E ^{x}[f_r(\theta_{\tilde X_s}\zeta)]\nu(d\zeta)\\
&=  E ^{(x,x)}\int f_r(\theta_{X_t}\zeta) f_r(\theta_{\tilde X_s}\zeta) \nu(d\zeta)= E ^{(x,x)} \bigl[V^{(r)}(\tilde{X}_s-X_t)\bigr]\\
&= E ^{(x,x)}\int e^{-\iota\alpha\cdot (\tilde{X}_s-X_t)}\gamma(d\alpha)=\int \overline{ E ^x e^{\iota\alpha\cdot\tilde{X}_s}}\cdot E ^x e^{\iota\alpha\cdot X_t}\gamma(d\alpha)\\
&=\int [\overline{\phi_X(\alpha)}]^s[\phi_X(\alpha)]^t\gamma(d\alpha)=\int [\phi_X(\alpha)]^s[\overline{\phi_X(\alpha)}]^t\gamma(d\alpha).
\end{align*}
The last equality came by switching $s$ and $t$ around in the calculation.  
From this, 
\begin{align}
&\int \bigl[g_t(x,r,\zeta)-g_s(x,r,\zeta)\bigr]^2 \nu(d\zeta) \nn\\
&= \int \bigl[g_t(x,r,\zeta)^2-2g_t(x,r,\zeta)g_s(x,r,\zeta)+g_s(x,r,\zeta)^2\bigr] \nu(d\zeta) \notag\\
&=\int \bigl[\left|\phi_X(\alpha)\right|^{2t}-2\overline{\phi_X(\alpha)^s}\phi_X(\alpha)^t+\left|\phi_X(\alpha)\right|^{2s}\bigr]\gamma(d\alpha)=\int \left |\phi_X(\alpha)^t-\phi_X(\alpha)^s\right|^2 \gamma(d\alpha)\notag\\
&=\int_{\alpha\neq 0} \left |\phi_X(\alpha)^t-\phi_X(\alpha)^s\right|^2 \gamma(d\alpha).\label{u-6}
\end{align}
Since   $\Z^d$ is the smallest subgroup that contains the support of  $p$, $|\phi_X(\alpha)|<1$  $\forall\alpha\in [-\pi,\pi)^d\setminus \{0\}$ (T7.1 in \cite{spitzer}). Thus the integrand in \eqref{u-6} is bounded and  converges to zero as $s, t\to\infty$.  Hence  $\{g_t(x,r,\zeta)\}_{t\in\Z_+}$ is  Cauchy   in $L^2(\nu)$ and  $\exists$  a bounded     $L^2(\nu)$ limit
$\bar g(x,r, \zeta)=\lim_{t\to\infty}g_t(x, r, \zeta).$

Letting $s\to\infty$ in 
$
g_{s+t}(x,r,\zeta)=\sum_y p^t(x,y) g_s(y,r, \zeta)
$
implies 
\[ \bar g(x,r,\zeta)=\sum_y p^t(x,y) \bar g(y,r,\zeta). \]  
Hence  for almost every $\zeta$,  $\bar g(\cdot\,,r,\zeta)$ is a bounded harmonic function for $p(x,y)$ and thereby a constant in $x$.  Combining this with a shift gives
$\bar g(0,r,\zeta)=\bar g(x,r,\zeta)=\bar g(0, r,  \theta_x\zeta)$ $\forall x\in\Z^d$. 
By ergodicity,  $\bar g(0,r,\zeta)$ equals $\nu$-almost surely a constant, and hence 
$\bar g(x,r,\zeta)$ equals $\nu$-almost surely  the same constant for all $x\in\Z^d$.   
 
Now we transfer these properties to $g_t(x,\zeta)$  through $L^1$ approximation.  
  By shift-invariance 
\[  \norm{g_t(x,\zeta)-g_t(x,r,\zeta)}_1 \le \sum_{y}p^{t}(x,y)\norm {f\circ\theta_y-f_r\circ\theta_y}_1 = \norm {f-f_r}_1 \; \underset{r\to\infty}\longrightarrow \; 0.
\]
From this we deduce  that $\{g_t(x,\cdot)\}$ is a Cauchy sequence in $L^1(\nu)$.  Hence 
we have an   $L^1(\nu)$ limit   $g(x,\zeta)=\lim_{t\to\infty}g_t(x, \zeta)$ and we can take $t\to\infty$ in the bound above to get 
\[  \norm{g(x,\zeta)-\bar g(x,r,\zeta)}_1 \le  \norm {f-f_r}_1. 
\]
Letting $r\to\infty$ takes  the right-hand side to zero, and we  
conclude that $g(x,\zeta)$ is $\nu$-almost surely a constant independent of $x$. 
This constant must equal $E^\nu[f]$  because by the $L^1(\nu)$ convergence 
$E^\nu[g(x,\cdot)]=\lim_{t\to\infty}  E^\nu[g_t(x,\cdot)]=E^\nu[f]$.  
\end{proof}

\bibliographystyle{plain}
\bibliography{harness_processes_in_one_spatial_dimension,growthrefs}

\begin{thebibliography}{10}

\bibitem{arra-83}
Richard Arratia.
\newblock The motion of a tagged particle in the simple symmetric exclusion
  system on {${\bf Z}$}.
\newblock {\em Ann. Probab.}, 11(2):362--373, 1983.

\bibitem{bala-rass-sepp}
M{\'a}rton Bal{\'a}zs, Firas Rassoul-Agha, and Timo Sepp{\"a}l{\"a}inen.
\newblock The random average process and random walk in a space-time random
  environment in one dimension.
\newblock {\em Comm. Math. Phys.}, 266(2):499--545, 2006.

\bibitem{bick-wich}
P.~J. Bickel and M.~J. Wichura.
\newblock Convergence criteria for multiparameter stochastic processes and some
  applications.
\newblock {\em Ann. Math. Statist.}, 42:1656--1670, 1971.

\bibitem{boro-gori}
Alexei Borodin and Vadim Gorin.
\newblock {\em Lectures on integrable probability}.
\newblock {\tt arXiv:1212.3351}.

\bibitem{brad-05}
Richard~C. Bradley.
\newblock Basic properties of strong mixing conditions. {A} survey and some
  open questions.
\newblock {\em Probab. Surv.}, 2:107--144, 2005.
\newblock Update of, and a supplement to, the 1986 original.

\bibitem{capu-deus-00}
P.~Caputo and J.-D. Deuschel.
\newblock Large deviations and variational principle for harmonic crystals.
\newblock {\em Comm. Math. Phys.}, 209(3):595--632, 2000.

\bibitem{corw-rev}
Ivan Corwin.
\newblock The {K}ardar-{P}arisi-{Z}hang equation and universality class.
\newblock {\em Random Matrices Theory Appl.}, 1(1):1130001, 76, 2012.

\bibitem{dema-ferr-02}
A.~De~Masi and P.~A. Ferrari.
\newblock Flux fluctuations in the one dimensional nearest neighbors symmetric
  simple exclusion process.
\newblock {\em J. Statist. Phys.}, 107(3-4):677--683, 2002.

\bibitem{durr-gold-lebo}
D.~D{\"u}rr, S.~Goldstein, and J.~Lebowitz.
\newblock Asymptotics of particle trajectories in infinite one-dimensional
  systems with collisions.
\newblock {\em Comm. Pure Appl. Math.}, 38(5):573--597, 1985.

\bibitem{durr}
Rick Durrett.
\newblock {\em Probability: theory and examples}.
\newblock Cambridge Series in Statistical and Probabilistic Mathematics.
  Cambridge University Press, Cambridge, fourth edition, 2010.

\bibitem{fellII}
William Feller.
\newblock {\em An introduction to probability theory and its applications.
  {V}ol. {II}.}
\newblock Second edition. John Wiley \& Sons Inc., New York, 1971.

\bibitem{ferr-font-rap}
P.~A. Ferrari and L.~R.~G. Fontes.
\newblock Fluctuations of a surface submitted to a random average process.
\newblock {\em Electron. J. Probab.}, 3:no. 6, 34 pp. (electronic), 1998.

\bibitem{ferr-nied-06}
Pablo~A. Ferrari and Beat~M. Niederhauser.
\newblock Harness processes and harmonic crystals.
\newblock {\em Stochastic Process. Appl.}, 116(6):939--956, 2006.

\bibitem{Gamk-85}
N.~G. Gamkrelidze.
\newblock A measure of ``smoothness'' of multidimensional distributions of
  integer-valued random vectors.
\newblock {\em Theory Probab. Appl.}, 30(2):427Ð--431, 1985.

\bibitem{hamm-67}
John~M. Hammersley.
\newblock Harnesses.
\newblock In {\em Proc. {F}ifth {B}erkeley {S}ympos. {M}athematical
  {S}tatistics and {P}robability ({B}erkeley, {C}alif., 1965/66), {V}ol. {III}:
  {P}hysical {S}ciences}, pages 89--117. Univ. California Press, Berkeley,
  Calif., 1967.

\bibitem{harr-65}
T.~E. Harris.
\newblock Diffusion with ``collisions'' between particles.
\newblock {\em J. Appl. Probability}, 2:323--338, 1965.

\bibitem{hsiao-82}
Ch{\^e}ng~T'an Hsiao.
\newblock Stochastic processes with {G}aussian interaction of components.
\newblock {\em Z. Wahrsch. Verw. Gebiete}, 59(1):39--53, 1982.

\bibitem{hsiao-85}
Cheng~Tan Hsiao.
\newblock Infinite systems with locally additive interaction of components.
\newblock {\em Chinese J. Math.}, 13(2):83--95, 1985.

\bibitem{jara-land-06}
M.~D. Jara and C.~Landim.
\newblock Nonequilibrium central limit theorem for a tagged particle in
  symmetric simple exclusion.
\newblock {\em Ann. Inst. H. Poincar\'e Probab. Statist.}, 42(5):567--577,
  2006.

\bibitem{jose-rass-sepp}
Mathew Joseph, Firas Rassoul-Agha, and Timo Sepp{\"a}l{\"a}inen.
\newblock Independent particles in a dynamical random environment.
\newblock {\em {\tt arXiv:1110.1889 }}.

\bibitem{kuma-08}
Rohini Kumar.
\newblock Space-time current process for independent random walks in one
  dimension.
\newblock {\em ALEA Lat. Am. J. Probab. Math. Stat.}, 4:307--336, 2008.

\bibitem{lawl-limi}
Gregory~F. Lawler and Vlada Limic.
\newblock {\em Random walk: a modern introduction}, volume 123 of {\em
  Cambridge Studies in Advanced Mathematics}.
\newblock Cambridge University Press, Cambridge, 2010.

\bibitem{ligg-85}
Thomas~M. Liggett.
\newblock {\em Interacting particle systems}, volume 276 of {\em Grundlehren
  der Mathematischen Wissenschaften [Fundamental Principles of Mathematical
  Sciences]}.
\newblock Springer-Verlag, New York, 1985.

\bibitem{ligg-04}
Thomas~M. Liggett.
\newblock Interacting particle systems---an introduction.
\newblock In {\em School and Conference on Probability Theory}, ICTP Lect.
  Notes, XVII, pages 1--56 (electronic). Abdus Salam Int. Cent. Theoret. Phys.,
  Trieste, 2004.

\bibitem{peli-seth-08}
Magda Peligrad and Sunder Sethuraman.
\newblock On fractional {B}rownian motion limits in one dimensional
  nearest-neighbor symmetric simple exclusion.
\newblock {\em ALEA Lat. Am. J. Probab. Math. Stat.}, 4:245--255, 2008.

\bibitem{peli-utev-97}
Magda Peligrad and Sergey Utev.
\newblock Central limit theorem for linear processes.
\newblock {\em Ann. Probab.}, 25(1):443--456, 1997.

\bibitem{pete-sepp-10}
Jonathon Peterson and Timo Sepp\"{a}l\"{a}inen.
\newblock Current fluctuations of a system of one-dimensional random walks in
  random environment.
\newblock {\em Ann. Probab.}, 38(6):2258--2294, 2010.

\bibitem{pinsky-fourier}
Mark~A. Pinsky.
\newblock {\em Introduction to {F}ourier analysis and wavelets}.
\newblock Brooks/Cole Series in Advanced Mathematics. Brooks/Cole, Pacific
  Grove, CA, 2002.

\bibitem{rio-lect}
Emmanuel Rio.
\newblock {\em Inequalities and limit theorems for weakly dependent sequences}.
\newblock Lecture notes for 3\`eme cycle. HAL Id {\tt <cel-00867106>}, 2013.

\bibitem{rost-vare-85}
Hermann Rost and Maria~Eul{\'a}lia Vares.
\newblock Hydrodynamics of a one-dimensional nearest neighbor model.
\newblock In {\em Particle systems, random media and large deviations
  ({B}runswick, {M}aine, 1984)}, volume~41 of {\em Contemp. Math.}, pages
  329--342. Amer. Math. Soc., Providence, RI, 1985.

\bibitem{sepp-exclbook}
Timo Sepp{\"a}l{\"a}inen.
\newblock {\em Translation invariant exclusion processes. {\rm Lecture notes
  available at } {\tt
  http://www.math.wisc.edu/\~{}seppalai/excl-book/etusivu.html}}.

\bibitem{sepp-rw}
Timo Sepp{\"a}l{\"a}inen.
\newblock Second-order fluctuations and current across characteristic for a
  one-dimensional growth model of independent random walks.
\newblock {\em Ann. Probab.}, 33(2):759--797, 2005.

\bibitem{sepp-10-ens}
Timo Sepp{\"a}l{\"a}inen.
\newblock {\em Current fluctuations for stochastic particle systems with drift
  in one spatial dimension}, volume~18 of {\em Ensaios Matem\'aticos
  [Mathematical Surveys]}.
\newblock Sociedade Brasileira de Matem\'atica, Rio de Janeiro, 2010.

\bibitem{spitzer}
Frank Spitzer.
\newblock {\em Principles of random walks}.
\newblock Springer-Verlag, New York, second edition, 1976.
\newblock Graduate Texts in Mathematics, Vol. 34.

\bibitem{toom-97}
Andre Toom.
\newblock Tails in harnesses.
\newblock {\em J. Statist. Phys.}, 88(1-2):347--364, 1997.

\bibitem{wals-spde}
John~B. Walsh.
\newblock An introduction to stochastic partial differential equations.
\newblock In {\em \'{E}cole d'\'et\'e de probabilit\'es de {S}aint-{F}lour,
  {XIV}---1984}, volume 1180 of {\em Lecture Notes in Math.}, pages 265--439.
  Springer, Berlin, 1986.

\bibitem{yu-flow}
Jinjiong Yu.
\newblock Edwards-{W}ilkinson fluctuations in the {H}owitt-{W}arren flows.
\newblock {\em {\tt arXiv 1412.3911}}.

\bibitem{zhai-phd}
Yun Zhai.
\newblock {\em Discrete time harness processes}.
\newblock 2015.
\newblock Thesis (Ph.D.)--University of Wisconsin.

\end{thebibliography}
\end{document}